\documentclass[abstracton, paper=a4, fontsize=12pt,DIV=12,bibliography=totoc,reqno]{amsart}

\usepackage[utf8]{inputenc}
\usepackage{amsthm, amssymb, amsmath, amsfonts, mathrsfs, dsfont, esint,textcomp}
\usepackage[english]{babel}
\usepackage[backend=biber,giveninits,maxnames=4,style=alphabetic,isbn=false, date=year, doi=false, eprint= true, url= false,sorting=ynt,sortcites=true]{biblatex}
\usepackage{csquotes}
\usepackage[shortlabels]{enumitem}
\usepackage[T1]{fontenc}
\usepackage[left=2cm,right=2cm,top=3.5cm,bottom=3.5cm]{geometry}
\usepackage[colorlinks=true, pdfstartview=FitV, linkcolor=blue, citecolor=blue, urlcolor=blue,pagebackref=false]{hyperref}
\usepackage{lmodern}
\usepackage{tikz}
\usetikzlibrary{hobby}

\definecolor{labelkey}{gray}{.8}
\definecolor{refkey}{gray}{.8}
\definecolor{darkblue}{rgb}{0,0,0.7} 
\definecolor{darkred}{rgb}{0.9,0.1,0.1}
\definecolor{darkgreen}{rgb}{0,0.5,0}



\newtheorem{thm}{Theorem}[section]
\newtheorem{theorem}[thm]{Theorem}
\newtheorem{prop}[thm]{Proposition}
\newtheorem{lem}[thm]{Lemma}

\theoremstyle{remark}
\newtheorem{rem}[thm]{Remark}

\theoremstyle{definition}
\newtheorem{defi}[thm]{Definition}


\newcommand{\ls}{\lesssim}

\newcommand{\e}{\mathbf{e}}

\newcommand{\1}{\mathbf{1}}
\newcommand{\R}{\mathbb{R}}
\newcommand{\Z}{\mathbb{Z}}

\newcommand{\eps}{\varepsilon}

\newcommand{\dd}{\, \mathrm{d}}
\newcommand{\wto}{\rightharpoonup}

\newcommand{\vc}{\mathbf}
\newcommand{\vu}{\vc{u}}
\newcommand{\vv}{\vc{w}}
\newcommand{\Oe}{{\Omega_\eps}}
\newcommand{\obst}{\mathcal{O}}


\renewcommand{\leq}{\leqslant}
\renewcommand{\geq}{\geqslant}
\renewcommand{\S}{\mathbb{S}}
\renewcommand{\subset}{\subseteq}


\DeclareMathOperator{\curl}{curl}
\DeclareMathOperator{\Id}{Id}
\DeclareMathOperator{\Tr}{Tr}
\DeclareMathOperator{\dv}{div}
\DeclareMathOperator{\dist}{dist}

\DeclareMathOperator*{\supp}{supp}

\numberwithin{equation}{section}
\setcounter{secnumdepth}{2}
\allowdisplaybreaks

\bibliography{bib.bib}

\usepackage[LGR, T1]{fontenc} 
\newcommand{\koppa}{%
  \mathord{\small \text{\usefont{LGR}{STEP-TLF}{m}{n}\symbol{"15}}}%
}

\usetikzlibrary{arrows.meta}


\title{Quantitative homogenization of the compressible Navier-Stokes equations towards Darcy's law}
\date{\today}

\author{Richard M. H\"ofer}
\address{Faculty of Mathematics, University Regensburg, Universit\"atsstraße 31, 93053 Regensburg, Germany.}
\email{richard.hoefer@mathematik.uni-regensburg.de}

\author{\v{S}\'arka Ne\v{c}asov\'a}
\address{Institute of Mathematics, Czech Academy of Sciences,
\v Zitn\'a 609/25, 115 67 Praha 1, Czech Republic.}
\email{matus@math.cas.cz}

\author{Florian Oschmann}
\address{Institute of Mathematics, Czech Academy of Sciences,
\v Zitn\'a 609/25, 115 67 Praha 1, Czech Republic.}
\email{oschmann@math.cas.cz}






\begin{document}

\begin{abstract}
We consider the solutions $\rho_\eps, \vu_\eps$ to the compressible Navier-Stokes equations (NSE) in a domain  periodically perforated by holes of diameter $\eps>0$. We focus on the case where the diameter of the holes is of the same order as the distance between neighboring holes. This is the same setting investigated in the paper by Masmoudi [\url{http://www.numdam.org/article/COCV_2002__8__885_0.pdf}], where convergence $\rho_\eps, \vu_\eps$ of the system to the porous medium equation has been shown. 
We prove a quantitative version of this convergence result provided that the solution of the limiting system is sufficiently regular. The proof builds on the relative energy inequality satisfied by the compressible NSE.
\end{abstract}

\maketitle

\tableofcontents

 \section{Introduction}

The effect of small particles (also called obstacles or holes) in an incompressible or compressible fluid has attracted considerable interest during the last decades. The question of whether and, if yes, in which way many particles affect the fluid goes back to Darcy's  experimental studies \cite{Darcy1856} and has been addressed in a rigorous mathematical way via homogenization since the work of Tartar \cite{Tartar80}. The main outcomes are heuristically the following: 
 \begin{enumerate}
     \item Tiny holes are not felt by the fluid, hence the equations stay the same.
     \item For critically sized holes,  an extra friction term  of the same order of magnitude as internal friction arises as first proposed by Brinkman \cite{Brinkman1949}.
    \item For larger particles, the additional friction term dominates over internal friction, thus leading to Darcy's law, which can be written in terms of a porous medium equation for compressible flows. 
 \end{enumerate} 
The rigorous proofs of these results for stationary \emph{incompressible} Stokes and Navier-Stokes equations (NSE) were given by Tartar \cite{Tartar80}, Allaire  \cite{Allaire89, Allaire90a, Allaire90b}, and later for the evolutionary case by  Mikeli\'c \cite{Mikelic91}, Feireisl, Namlyeyeva, and Ne\v{c}asov\'{a} \cite{FeireislNamlyeyevaNecasova2016}, Lu and Yang \cite{LuYang2023} (see also \cite{BerezhnyiBerlyandKhruslov2008} and \cite{Khruslov2021}). Without being exhaustive, we also refer to related results for more general (random) particle configurations and inhomogeneous Dirichlet boundary conditions \cite{BeliaevKozlov96, CarrapatosoHillairet20, DesvillettesGolseRicci08, Giunti21, GiuntiHoefer19, HillairetMoussaSueur19, HoeferJansen20}, very large Reynolds numbers \cite{Hoefer2022}, and non-Newtonian fluids \cite{LuQian2023}.

Similar homogenization results for \emph{compressible} fluids are rather rare and mainly focus on the case of tiny particles which do not affect the limiting equations. Stationary NSE in three spatial dimensions have been considered by Feireisl and Lu in \cite{FeireislLu2015}. Building on this, results for a wider class of pressure functions \cite{DieningFeireislLu17}, for two-dimensional flows \cite{NecasovaPan2022}, for instationary flows \cite{LuSchwarzacher18, NecasovaOschmann2023, OschmannPokorny2023}, for randomly distributed holes \cite{BellaOschmann2023}, and for heat conducting fluids \cite{BasaricChaudhuri2023, LuPokorny2021, Oschmann22, PokornySkrisovsky2021} have been achieved, just to name a few. 

The first rigorous result on homogenization of compressible NSE, however, has been given by Masmoudi in \cite{Masmoudi02}, where he considered the case of large particles, the radius of which is proportional to their mutual distance. He gave a qualitative convergence result to the porous medium equation. To the authors' knowledge, this is the only available homogenization result for the compressible NSE where the particles are large enough to affect the limit which is still compressible (see however \cite{BellaOschmann2022, HoeferKowalczykSchwarzacher21} for homogenization results with a combined low Mach number limit).  In the case of the Navier-Stokes-Fourier equations, a similar result has been obtained by Feireisl, Novotn\' y,  and Takahashi \cite{FeireislNovotnyTakahashi2010}. An analogous problem for a quasi-static nonlocal version of the Navier-Stokes-Korteweg system has been considered in \cite{RohdeVonWolff20} where a nonlocal Cahn-Hilliard system is obtained in the homogenization limit.

The purpose of the present contribution is to revisit the setting in \cite{Masmoudi02} and to give a quantitative version of the convergence result to the porous medium equation. Convergence rates are an important aspect of homogenization theory in order to provide links to applications. In the case of the incompressible Stokes equations with  particle distances of the same order as their radii, such quantitative homogenization results have been achieved in \cite{Marusic-PalokaMikelic1996, Shen2022}.
We combine classical homogenization correctors  with the method of relative energies to obtain similar results for the compressible NSE. In particular, our methods differ substantially from the ones in \cite{Masmoudi02} which are mostly based on compactness and duality arguments. We provide results both on the three dimensional torus and bounded domains. The convergence rates we obtain seem suboptimal which is caused by the lack of regularity of the pressure. Our results hold for adiabatic exponents $\gamma \geq 2$ and a large range of  Reynolds numbers.

\subsection{Setting}
Let $\Omega$ be either a smooth bounded domain in $\R^3$, or $\Omega = \mathbb{T}^3 = \R^3/\Z^3$ be the three-dimensional (flat) torus. Denote $Q = (-1,1)^3$ and $B_1 \subset Q$ the unit ball. Let $\obst \Subset B_1$, the reference particle, be a fixed closed simply connected set with $0\in \obst$ as an inner point and smooth boundary such that $B_1 \setminus \obst$ is a connected open set.

\begin{figure}[h] 
\begin{center}

\begin{tikzpicture} [scale = 0.4]
\filldraw[fill={rgb:black,1;white,2}] (0,0) to [closed, curve through = {(1.2,2) (0.4,3.3) (1.9,4.8)  (4.9,4.2)(3.2,2.4) (4,0) (3,-1.4) (0.3,-1.2)}] (0,0);
\node (A) at (2.2:1.5) [above right] {$\obst$};
 
\node (B) at (-2.5, 4.0) [above right] {$Q$};
 
\draw[draw=black] (-2.8,-3.5) rectangle ++(10,10);
 
\end{tikzpicture}
\end{center}
\caption{A single cell}  \label{fig:single.particle}
\end{figure}

For $\eps>0$, we cover the set $\Omega$ with a regular mesh of size $2 \varepsilon$ and we denote by $x_i^{\varepsilon}$ the center of the cell with index $i$ at level $\varepsilon$. Let $Q_i^{\varepsilon} = x_i^{\varepsilon} + \eps Q$ be the cell with center $x_i^{\varepsilon}$, and $ i \in \{ 1, \dots, N(\varepsilon)\}$ be those indices for which the cell $Q_i^{\varepsilon}$ is entirely included in $\Omega$. Consider in each of the cells the particle
\begin{equation*}
\obst_i^{\varepsilon} := x_i^{\varepsilon} + \varepsilon \obst , \hspace{2 ex} i = 1 \dots N(\varepsilon).
\end{equation*}
Now we define the perforated domain $\Oe$ by
\begin{equation}\label{omegaeps}
\Omega_\eps = \Omega \setminus \bigcup_{ i = 1}^{N(\varepsilon)} \obst_i^{\varepsilon}.
\end{equation}
All the objects defined above are illustrated in Figure \ref{fig:perforated.domain}.
We remark that such a perforated domain can be viewed as a toy model for a porous medium. In more realistic models, both the fluid and the solid phase (the holes) are connected (see \cite{Allaire89} for such homogenization result for Stokes flows).

 \begin{figure}[h]
\includegraphics{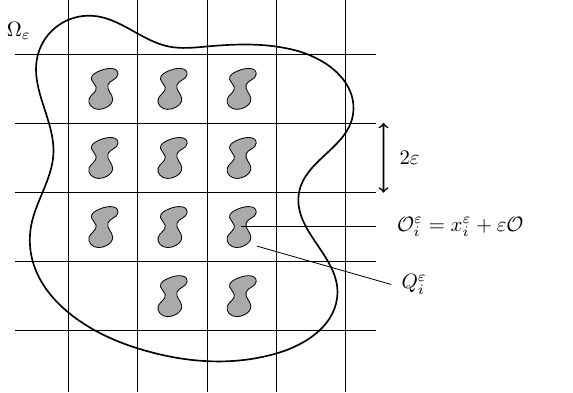}


 
  
 
  
  

\caption{The perforated domain $\Omega_\eps$} \label{fig:perforated.domain}
\end{figure}
If $\Omega = \mathbb T^3$, we will conveniently restrict ourselves to consider only $\eps>0$ with $(2\eps)^{-1} \in  \mathbb{Z}$. In this way, we have
\begin{align*}
    \Omega = \bigcup_{ i = 1}^{N(\varepsilon)} Q_i^\eps.
\end{align*}
Let $\mu >0$ and $\eta \geq 0$ be fixed constants and denote for any matrix $A$
\begin{align*}
        \S (A) =  \mu\left(A + A^T - \frac 2 3 \Tr(A) \Id \right) + \eta \Tr(A) \Id.
\end{align*}
For simplicity, we set $\mu=1$. For $T>0$, we consider the  compressible NSE
\begin{align}\label{NSE}
    \left\{
    \begin{array}{rcll}
         \partial_t \rho_\eps + \dv(\rho_\eps \vu_\eps) &=&0 & \qquad  \text{in } (0,T) \times \Omega_\eps, \\
          \eps^\lambda \big( \partial_t (\rho_\eps \vu_\eps) +  \dv(\rho_\eps \vu_\eps \otimes \vu_\eps) \big) - \eps^2 \dv \S(\nabla \vu_\eps) + \nabla p(\rho_\eps) &=& \rho_\eps \vc{f} & \qquad  \text{in } (0,T) \times \Omega_\eps , \\
          \vu_\eps &=& 0 & \qquad  \text{on } (0,T) \times \partial \Omega_\eps, \\
          \rho_\eps(0,\cdot) = \rho_{\eps0},   \quad (\rho_\eps\vu_\eps)(0,\cdot) &=& \vc{m}_{\eps0} & \qquad  \text{in } \Omega_\eps, \\
    \end{array}
    \right.
\end{align}
 for some given initial data $\rho_{\eps0}, \vc{m}_{\eps0}$ and $\lambda>0$. We assume the pressure as a function $p \in C([0,\infty)) \cap C^2((0,\infty))$ satisfies for some $\gamma>\frac32$
\begin{align}\label{p}
    p(0)=0, \qquad p'(s)>0 ~~ (\text{for } s>0), \qquad \lim_{s \to \infty} \frac{p'(s)}{s^{\gamma-1}} = p_\infty > 0.
\end{align}
The target system is Darcy's law for the fluid, and the continuity equation for the density, i.e.,
\begin{align}\label{limit.system}
    \left\{
    \begin{array}{rcll}
    \theta \partial_t \rho + \dv(\rho \vu) &=& 0 & \qquad  \text{in } (0,T) \times \Omega,\\
    \mathcal K^{-1} \vu + \nabla p(\rho) &=& \rho \vc{f} & \qquad  \text{in } (0,T) \times \Omega, \\
    (\rho \vu) \cdot \vc n &=& 0 &\qquad \text{on } (0,T) \times \partial \Omega, \\ 
    \rho(0,\cdot) = \rho_0,   \quad (\rho \vu)(0,\cdot) &=& \vc{m}_0 & \qquad  \text{in } \Omega. \\
        \end{array}
    \right.
\end{align}
Note that the boundary condition $(\rho \vu)\cdot \vc n = 0$ on $\partial \Omega$ is empty if  $\Omega = \mathbb T^3$, and we shall replace $\mathcal{K}^{-1}$ by $\mu \mathcal{K}^{-1}$ if the viscosity $\mu \neq 1$.
Here,
$\theta \in (0,1)$ is the porosity, i.e., the (asymptotic) volume fraction of the fluid domain,
\begin{align} \label{theta}
    \theta := \frac{|Q_i^\eps \setminus \obst_i^\eps|}{|Q_i^\eps|} = \frac{|Q \setminus \obst|}{|Q|} = 1 - \frac{|\obst|}{|Q|} .
\end{align}
Moreover, the resistance $\mathcal K \in  \R^{3\times 3}$ (resp.~the permeability $\mathcal K^{-1}$) is a constant positive definite matrix which is defined in \eqref{def:K} below.
Darcy's law reflects the high friction forces at the holes that lead to the effective term $\mathcal K^{-1} \vu$ which is dominant over both the fluid inertia and internal friction in the fluid.

The fluid velocity can be eliminated from this system, yielding the porous medium type equation
\begin{align}\label{FP}
\left\{
\begin{array}{rcll}
    \theta \partial_t \rho - \dv(\rho \mathcal K  \nabla p(\rho)) + \dv(\rho^2 \mathcal K  \vc{f}) &=& 0 & \qquad  \text{in } (0,T) \times \Omega,\\
    \big( \rho \mathcal{K}(\rho \vc f - \nabla p(\rho)) \big) \cdot \vc n &=& 0 & \qquad \text{on } (0,T) \times \partial \Omega,\\
    \rho(0, \cdot) &=& \rho_0 & \qquad \text{in } \Omega.
\end{array}
\right.
\end{align}
The formulation of the porous medium equation through this combination of Darcy's law and the continuity equation dates back to a work of Muskat \cite{Muskat34}.

At this point, we state the following informal version of our main result. For the precise statement, we refer to Theorem \ref{thm1} below.
\begin{thm}[Informal statement of the main result]
    Let $\gamma \geq 2$ and $\lambda > \lambda_0(\gamma) > 1$. Then, the solution $(\rho_\eps,\vu_\eps)$ of the primitive system \eqref{NSE} converges (in a suitable norm) to the solution $(\rho,\vu)$ of the target system \eqref{limit.system} with rate $\eps^\beta$, where $\beta > 0$ just depends on $\lambda$, $\Omega$, $\gamma$, and the initial data.
\end{thm}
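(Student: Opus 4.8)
The plan is to run a relative energy (modulated energy) argument for the compressible Navier–Stokes system, using the regular solution $(\rho,\vu)$ of the Darcy/porous-medium system \eqref{limit.system}–\eqref{FP} as the reference state, but \emph{corrected} so that it becomes an admissible competitor on the perforated domain $\Oe$. Concretely, I would first build an approximate velocity field $\vu_\eps$ on $\Oe$ that vanishes on $\partial\Oe$ and is $\eps$-close to $\vu$ in the relevant norms: in each cell $Q_i^\eps$ one inserts the classical cell-problem corrector (the solution of the Stokes problem in $Q\setminus\obst$ with unit boundary data, whose average defines the resistance matrix $\mathcal K$ in \eqref{def:K}), rescaled by $\eps$ and multiplied against the slowly varying field $\vu(t,x_i^\eps)$; a Bogovskii-type correction restores the divergence constraint dictated by the continuity equation. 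The density reference is just $\rho$ itself (extended, and possibly mollified at scale $\eps$). The output of this step is a pair $(\tilde\rho_\eps,\tilde\vu_\eps)$ that one feeds into the relative energy functional
\[
\mathcal E_\eps(t) = \int_{\Oe} \Big( \tfrac{\eps^\lambda}{2}\rho_\eps |\vu_\eps - \tilde\vu_\eps|^2 + H(\rho_\eps) - H'(\tilde\rho_\eps)(\rho_\eps-\tilde\rho_\eps) - H(\tilde\rho_\eps) \Big) \dd x,
\]
where $H$ is the pressure potential, $H''(s)=p'(s)/s$.

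Next I would write the relative energy inequality satisfied by weak solutions of \eqref{NSE} tested against $(\tilde\rho_\eps,\tilde\vu_\eps)$. This produces $\mathcal E_\eps(t)$ plus the dissipation $\eps^2\int_0^t\int_{\Oe}\S(\nabla(\vu_\eps-\tilde\vu_\eps)):\nabla(\vu_\eps-\tilde\vu_\eps)$ on the left, and on the right a sum of \emph{remainder} terms: the equation residual of $(\tilde\rho_\eps,\tilde\vu_\eps)$ in \eqref{NSE}, convective terms of order $\eps^\lambda$, the pressure/compressibility terms, and — crucially — the term $\eps^2\int \S(\nabla\tilde\vu_\eps):\nabla(\vu_\eps-\tilde\vu_\eps)$, which after integration by parts against the corrector generates precisely the Darcy friction $\int \mathcal K^{-1}\vu\cdot(\vu_\eps-\tilde\vu_\eps)$ up to small errors, matching the $\nabla p(\rho)$ and $\rho\vc f$ terms that were engineered into $\tilde\vu_\eps$ via the limit equation \eqref{limit.system}. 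The key algebraic cancellation is that the leading-order terms conspire, by the very definition of $\mathcal K$ and the fact that $(\rho,\vu)$ solves \eqref{limit.system}, to leave only quantities that are either controlled by $\mathcal E_\eps$ itself (via coercivity of the relative pressure potential for $\gamma\ge 2$, and convexity estimates), or are genuinely $O(\eps^\beta)$. One then closes with a Grönwall argument, using $\mathcal E_\eps(0) = O(\eps^{2\beta})$ from the assumed well-prepared initial data, to get $\mathcal E_\eps(t) \le C\eps^{2\beta}$ uniformly on $[0,T]$, which yields the asserted convergence rates for $\rho_\eps\to\rho$ in $L^\infty_tL^\gamma$ and for the (extended, corrected) velocity.

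The main obstacle — and the source of the suboptimal exponent advertised in the introduction — is the \textbf{pressure remainder}. The terms involving $p(\rho_\eps)-p(\tilde\rho_\eps)$ and the residual $\partial_t\tilde\rho_\eps + \dv(\tilde\rho_\eps\tilde\vu_\eps)$ cannot be bounded purely by the relative energy: they require extra integrability of $\rho_\eps$ (better than the energy $L^\infty_tL^\gamma$ bound) that weak solutions of the compressible NSE do not automatically possess on a perforated domain with $\eps$-dependent constants. I expect one must either invoke improved pressure/density estimates (e.g. an $L^{p}_{t,x}$ bound for $p(\rho_\eps)$ with $p>1$, obtained by testing \eqref{NSE} with a Bogovskii corrector of $\rho_\eps^{\Theta}$ and carefully tracking the $\eps$-powers through the perforation), or accept a loss and balance the various $\eps$-powers — the inertial scale $\eps^\lambda$, the viscous scale $\eps^2$, the corrector error $\eps$, and the pressure-estimate loss — against each other; this optimization is exactly what fixes $\lambda_0(\gamma)$ and $\beta$. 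A secondary technical point is the uniform-in-$\eps$ validity of Bogovskii and Poincaré/Korn inequalities on $\Oe$ with the stated constants, but since the holes are at the same scale as their spacing these are classical and I would cite them.
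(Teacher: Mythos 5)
Your overall strategy (relative energy against a corrected version of the Darcy solution, cancellation of the leading terms via the cell problem and Darcy's law, then Gr\"onwall) is the one the paper follows, but your choice of reference \emph{density} creates a genuine gap. You take the density reference to be ``just $\rho$ itself''. The paper instead takes $r_\eps = p^{-1}\big(p(\rho) + \eps^2\,\vc q^\eps\cdot\vu\big)$, i.e.\ it corrects the pressure to first order by the cell pressure $\vc q$ from \eqref{sol.Stokes}; see \eqref{r.w.formal}. This correction is not cosmetic. When you rewrite the viscous remainder $\eps^2\int_\Oe\S(\nabla\vv_\eps):\nabla(\vv_\eps-\vu_\eps)\dd x$ using the cell problem, the relevant identity is $\eps^2(-\Delta W^\eps) = \mathcal K^{-1} - \eps^2\nabla\vc q^\eps$, so besides the Darcy friction you are left with the term $\eps^2\int_\Oe \nabla\vc q^\eps\,\vu\cdot(\vv_\eps-\vu_\eps)\dd x$. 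Its first factor is only $O(1)$ in $L^\infty$ (since $\vc q^\eps\sim\eps^{-1}\vc q(\cdot/\eps)$), while $\|\vv_\eps-\vu_\eps\|_{L^2(\Oe)}$ is controlled only by $\eps\|\nabla(\vv_\eps-\vu_\eps)\|_{L^2(\Oe)}$, i.e.\ by the square root of the available dissipation; the resulting bound is $O(1)$, not $o(1)$, and integrating by parts does not help because $\dv\vu_\eps$ is only $O(\eps^{-1})$ in $L^2$. The term disappears only if $\nabla p(r_\eps)$ itself contains $\eps^2\nabla\vc q^\eps\,\vu$, so that it cancels against the $\nabla H'(r_\eps)\cdot(r_\eps\vv_\eps-\rho_\eps\vu_\eps)$ part of the relative energy remainder — which is exactly what the paper's choice of $r_\eps$ achieves. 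With $r=\rho$ your Gr\"onwall inequality closes only up to an $O(1)$ error and yields no rate.

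Beyond this, your proposal deviates from the paper in ways worth noting. The paper does not use a Bogovskii correction to enforce the divergence constraint on the velocity competitor; it keeps $\vv_\eps=W^\eps\vu$ (so $\dv\vv_\eps = W^\eps:\nabla\vu$ is merely bounded) and absorbs the mismatch through the quantitative oscillation estimate $\|(\tfrac1\theta\Id-W^\eps)\1_\Oe\|_{[W^{1,1}(\Omega)]'}\lesssim\eps$. For the pressure remainder, rather than seeking improved integrability of $\rho_\eps$ via Bogovskii testing with $\rho_\eps^\Theta$ (delicate for $\gamma=2$ on $\Oe$), the paper extends $p(\rho_\eps)$ to all of $\Omega$ by duality through Mikeli\'c's restriction operator and decomposes the extension into pieces lying in negative Sobolev spaces in $t$ and $x$ with explicit $\eps$-weights, which are then paired with the $O(\eps)$-oscillating quantities above; it is this pairing, not higher integrability, that fixes $\lambda_0(\gamma)$ and $\beta$. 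Finally, in a bounded domain you would additionally need a boundary corrector, since $\vv_\eps\neq0$ on $\partial\Omega$; the paper constructs one from a cut-off vector potential of $W-\mathcal K$, and this boundary layer is what degrades $\beta$ from $\min\{1,2\lambda-2,\lambda-\tfrac3\gamma\}$ on the torus to $\min\{\tfrac12,2\lambda-2,\lambda-\tfrac3\gamma\}$.
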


\subsection{Elements of the proof} \label{sec:formal}
Our proof is based on the relative energy inequality for the primitive system, see Section \ref{sec:relativeEnergy}. We consider the relative energy between the solution of the primitive system $(\rho_\eps,\vu_\eps)$ and a modification of the solution $(\rho,\vu)$ of the target system. The modification is needed to ensure no-slip boundary conditions for the modified fluid velocity and that the modification almost solves the momentum equation of the primitive system.
The modification is based on correctors, the choice of which is motivated by a formal two-scale expansion.
For convenience, we  review the formal argument that goes back to \cite{Diaz99}.

\medskip

We make the ansatz:
\begin{align*}
    \rho_\eps(x) &= \rho_0(x,x / \eps) + \eps \rho_1(x,x / \eps) + \dots,\\
    \vu_\eps(x) &= \vu_0(x,x / \eps) + \eps \vu_1(x,x / \eps) + \dots,
\end{align*}
where $\rho_k, \vu_k$ are periodic in the second variable, which is denoted by $y = x/\eps$. 
Then, we also have
\begin{align*}
    p_\eps(x) = p(\rho_\eps) = p_0(x,x/\eps) + \eps p_1(x,x/\eps) + \dots.
\end{align*}
We now insert this expansion into the equations 
and compare terms of the same order in $\eps$.
To order $\eps^{-1}$, we find (for $\lambda > 0$) 
\begin{align*}
    \dv_y (\rho_0 \vu_0) &= 0, \\
    \nabla_y p_0 &= 0.
\end{align*}
The second equation yields that $p_0$ and thus $\rho_0$ are $y$-independent and therefore (assuming $\rho_0 >0$) the first equation becomes
\begin{align}\label{div.2scale}
    \dv_y \vu_0 = 0.
\end{align}
Next, we have at order $\eps^0$ (for $\lambda > 1$ and using $\dv_y \vu_0 = 0$)
\begin{align}
    \partial_t \rho_0 + \dv_x (\rho_0 \vu_0) + \dv_y(\rho_0 \vu_1 + \rho_1 \vu_0) &= 0, \label{continuity.2scale}\\
    -\Delta_y \vu_0 + \nabla_x p_0 +  \nabla_y p_1 &= \rho_0 \vc f. \label{Stokes.2scale}
\end{align}
Since the no-slip boundary condition at the holes must be encoded in the fast variable, this is complemented by
\begin{align} \label{boundary.2scale}
    \vu_0(x,y) = 0 \quad \text{for } y \in \obst.
\end{align}
For fixed $x$, equations \eqref{div.2scale}, \eqref{Stokes.2scale}, and \eqref{boundary.2scale} are just a Stokes problem in $y$ in the fixed domain $Q \setminus \obst$ with periodic boundary conditions.
We thus introduce  $(\vv_i, q_i) \in H^1(Q) \times L^2(Q \setminus \obst )/\R$ to be the unique
solution to the Stokes problem\footnote{More precisely, the cube $Q$ should herein be replaced by the flat torus $\R^3 / (2\Z)^3$.}
\begin{align} \label{sol.Stokes}
\left\{
\begin{aligned}
-\Delta \vv_i + \nabla q_i = \vc e_i  &\quad \text{in } Q\setminus \obst, \\
 \dv \vv_i = 0 &\quad \text{in } Q \setminus \obst, \\
 \vv_i = 0 &\quad \text{in }  \obst, \\
 (\vv_i, q_i) &\quad \text{are } Q\text{-periodic}.
\end{aligned}
\right.
\end{align}
We set $W = (\vv_1, \vv_2, \vv_3)$, $\vc q = (q_1,q_2,q_3)^T$.
Then, the solution to \eqref{div.2scale}, \eqref{Stokes.2scale}, and \eqref{boundary.2scale} is given by
\begin{align}
    \vu_0(x,y) = W(y)(\rho_0(x) \vc f(x) - \nabla_x p_0(x)), \label{u_0} \\
    p_1(x,y) = \vc q(y) \cdot (\rho_0(x) \vc f(x) - \nabla_x p_0(x)). \label{p_1} 
\end{align}
The function $\vu_0$ is naturally extended by $0$ inside $\obst$. By periodicity in $y$ we then easily pass to the weak limit by just taking the average over $Q$, which yields
\begin{align} \label{Darcy.formal}
    \vu_0(x, x/\eps) \wto \vu(x) = \mathcal K(\rho_0 \vc f(x) - \nabla_x p_0(x)),
\end{align}
where 
\begin{align} \label{def:K}
    \mathcal K = \fint_Q W \dd x.
\end{align}

By classical arguments, $\mathcal{K}$ is positive definite (see \cite[Chapter~7, Proposition~2.2]{Sanchez-Palencia80} and \cite[end of Section~1]{Masmoudi02}). Moreover, $\mathcal{K}$ is symmetric by
\begin{align*}
         \mathcal{K}_{ij} = \fint_Q \vc e_i \cdot  W \vc e_j  \dd x =  \frac 1 {|Q|} \int_{Q \setminus \obst}    (-\Delta \vv_i + \nabla p_i) \cdot \vv_j \dd x =  \frac 1 {|Q|}\int_{Q \setminus \obst} \nabla \vv_i : \nabla \vv_j \dd x =  \mathcal{K}_{ji}.
\end{align*}

To pass to the limit in the continuity equation \eqref{continuity.2scale}, let $\tilde \rho_0$ denote the extension by $0$ in the holes, and observe that then the following equation is valid in all of $\Omega$:
\begin{align*}
    \partial_t \tilde \rho_0 + \dv_x (\rho_0 \vu_0) + \dv_y(\rho_0 \vu_1 + \rho_1 \vu_0) = 0.
\end{align*}
Note that there is no need to change $\rho_k$ in the holes for the terms inside of the divergence terms because all the fluid velocities $\vu_i$ vanish there.
Now we can again pass to the limit by just taking the average with respect to $y$ over the whole cell $Q$. Denoting  $\rho(x) = \rho_0(x,x/\eps)$, since $\rho_0$ is constant in the second variable, we  obtain 
\begin{align} \label{continuity.formal}
        \theta \partial_t  \rho + \dv_x (\rho \vu) = 0,
\end{align}
where we recall from \eqref{theta} that $\theta = 1 - |\obst|/|Q|$ is the fluid volume fraction.
Equations \eqref{Darcy.formal} and \eqref{continuity.formal} are the target equations.

\medskip 

If $\Omega = \mathbb R^3$, this motivates to take in the relative energy  inequality the functions $(r_\eps, \vv_\eps)$ defined as $\vv_\eps(x)  = \vu_0(x,x/\eps)$ and  $r_\eps(x) =  p^{-1}(p_0(x,x/\eps) + \eps p_1(x,x/\eps))$. By \eqref{u_0}, \eqref{p_1}, and \eqref{Darcy.formal}, we have
\begin{align} \label{r.w.formal}
\begin{split}
\vv_\eps(x) & = \vu_0(x,x/\eps) = W(x/\eps)(\rho_0(x) \vc f(x) - \nabla_x p_0(x)) = W(x/\eps) \mathcal K^{-1} \vu(x),  \\
    r_\eps(x) &=  p^{-1}(p_0(x,x/\eps) + \eps p_1(x,x/\eps)) = p^{-1}\big( p(\rho(x)) + \eps  \vc q(x/\eps) \cdot \mathcal K^{-1} \vu(x) \big) .
\end{split}
\end{align}
It might seem surprising at first that we take different orders of approximation for the density and the fluid velocity, namely first order in $\eps$ for the density and zeroth order for the fluid velocity.
This choice is motivated, however, by the terms  appearing in \eqref{Stokes.2scale}. In particular, the first order pressure correction corresponds to the pressure appearing in the cell problem \eqref{sol.Stokes} which is at the heart of the additional friction term in the limit system.

\subsection{Comparison with previous results and open problems}\label{sec:previous}

The convergence $(\rho_\eps, \vu_\eps) \to (\rho, \vu)$ has been shown qualitatively in \cite{Masmoudi02} for $\lambda = 4$. 
At first glance, the scaling in \cite{Masmoudi02} appears to be different.
However, if we denote the corresponding fluid velocity in \cite{Masmoudi02} by $\tilde{\vu}_\eps$,  then,  convergence of $\vu_\eps := \eps^{-2} \tilde{\vu}_\eps$ is shown in \cite{Masmoudi02}, and $\vu_\eps$ satisfies precisely \eqref{NSE} with $\lambda=4$. 
We choose here to directly rewrite the system for $\tilde{\vu}_\eps$ anticipating how the holes slow down the fluid velocity $\tilde{\vu}_\eps$. 
We believe that this choice makes more transparent the scaling of the dimensionless quantities.
In the present paper, we provide  quantitative convergence results with  the help of the relative energy method.

We remark that the scaling in the NSE \eqref{NSE} corresponds to a Reynolds number $\mathrm{Re} = \eps^{\lambda-2}$, a Mach number $\mathrm{Ma} = \eps^{\lambda/2}$, a Froude number $ \mathrm{Fr} = \eps^{\lambda/2}$, and a Strouhal number $\mathrm{Sr} = 1$. 
In particular, the Knudsen number satisfies $\mathrm{Kn} \sim \mathrm{Ma}/\mathrm{Re} = \eps^{2 - \lambda/2}$. The Knudsen number is the ratio between the mean free path length $l$ and the observation length scale $L$. The latter is chosen to be the macroscopic length scale, hence of order $1$. In order that the fluid is reasonably modeled by the Navier-Stokes equations at the length scale of the holes, we must have $\mathrm{Kn} = l/L \lesssim \eps$ as $\eps \to 0$ and thus $\lambda \leq 2$. As stated in the main theorem, Theorem \ref{thm1}, we are  able to show the convergence result for $\lambda \leq 2$ if the adiabatic exponent $\gamma \geq 3$. To our knowledge, this is the first instance of a homogenization result for compressible fluids where the mean free path length is actually less than the diameter of the holes. As remarked above there is natural threshold $\lambda > 1$ since for $\lambda \leq 1$, the Reynolds number at the particle length scale $\eps$ is at least of order one and therefore the validity of the local Stokes problem \eqref{Stokes.2scale} ceases to be valid even formally.
We are able to asymptotically reach that threshold as $\gamma \to \infty$, see \eqref{lambda_0}.

\smallskip

 Compared to \cite{Masmoudi02}, where  $\gamma \geq 3$ is assumed,  our result is valid for all adiabatic exponents $\gamma \geq 2$ on the expense of a larger $\lambda$. In view of applications, it would be very important to further relax this assumption. We recall that existence of renormalized finite energy weak solutions (see Definition \ref{defin:WeakSolTime}) is known for $\gamma > 3/2$. Moreover, in \cite{FeireislJinNovotny2012}, the relative energy has been used to show weak-strong uniqueness for all $\gamma > 3/2$. Nevertheless, our proof cannot be easily extended to $ \gamma \in (3/2,2)$ (and this shortcoming on the range of adiabatic exponents seems presently common to all related homogenization results for the compressible Navier-Stokes equations). The reason for that is the very weak dissipation in the relative energy inequality \eqref{relen}, which has a prefactor $\eps^2$. Thanks to the Poincar\'e inequality \eqref{Poinc}, this still allows to absorb $\|\vu_\eps - \vv_\eps\|_{L^p(\Oe)}^2$ for $p=2$. However, that is no longer possible for $p>2$ which seems necessary when $\gamma < 2$ (e.g., to estimate the term $I_2$ in Section~\ref{sec:estimate.errors}).

 \smallskip

We emphasize that even though $\mathrm{Ma} = \eps^{\lambda/2}$ vanishes as $\eps$ does, the limit system is still compressible. The reason for this behavior is that both the external forces and friction forces at the holes, which are manifested in the term $\mathcal K^{-1} \vu$ in \eqref{limit.system}, are sufficiently large to allow for a pressure induced by density fluctuations of order $1$. 

\smallskip


\begin{figure}[h]
    \begin{tikzpicture}
        \draw[fill=orange!15, draw=none] (-5,1) rectangle (4.5,4.5);
        \path[fill=yellow!30, draw=none] (-5,1) -- (1,1) -- (2,2) -- (-5,2) -- (-5,1);
        \path[fill=cyan!30, draw=none] (0,1) -- (1,1) -- (2,2) -- (0,1);
        
        \draw[dashed] (0,0) -- (1,1);
        \draw[dashed] (1,0) -- (1,1);
        \draw[dashed] (2,0) -- (2,2);
        
        \draw[very thick, orange] (1,1) -- (4.5,4.5);
        \draw[very thick, orange] (1,1) -- (4.5,1);
        \draw[very thick, orange] (-5,2) -- (4.5,2);
        \draw[very thick, brown] (-5,1) -- (1,1);

        \draw[-Stealth] (-5, 0) -- (5,0);
        \node at (5,0) [anchor=west] {$\koppa$};
        \draw[-Stealth, gray] (0,0) -- (0,5);
        \node at (0,5) [anchor=east] {$\alpha$};
        \node at (0,0) [anchor=north] {$0$};
        \node at (1,0) [anchor=north] {$1$};
        \node at (2,0) [anchor=north] {$3/2$};

        \node at (-5,1) [anchor=east] {$1$};
        \node at (-5,2) [anchor=east] {$3/2$};

        \node[fill=orange!15, text=black] at (0.2, 3.3) {\small Not locally incompressible};
        \node[align=left] at (2.3, 2) [anchor=west] {\small Big local\\ Reynolds number};
        \node[fill=white, text=brown] at (-1.5, .95) [anchor=north] {\small Darcy's law (periodic)};
        \node at (-5, 2) [anchor=north west] {\small Big local Knudsen number};
        \node at (-5, 1) [anchor=south west] {\small Darcy's law};

        \node[fill=orange!15, text=cyan] at (0, 2.05) [anchor=south] {\small Physically relevant};
        \draw[-Stealth, cyan] (-.1,2.15) -- (0.75,1.2);
    \end{tikzpicture}
    \caption{Formal scaling regimes} \label{fig:regimes}
\end{figure}

Despite considerable effort in the study of such homogenization problems for the compressible NSE, the case of holes of size $\eps^{\alpha}$ for $\alpha \in (1,3]$ is currently completely open. More precisely, when $\alpha \in [1,3]$, one can study the primitive system
\begin{align}\label{NSE.general}
    \left\{
    \begin{array}{rcll}
         \partial_t \rho_\eps + \dv(\rho_\eps \vu_\eps) &=&0 & \qquad  \text{in } (0,T) \times \Omega_\eps, \\
          \big( \partial_t (\rho_\eps \vu_\eps) +  \dv(\rho_\eps \vu_\eps \otimes \vu_\eps) \big) - \eps^{\koppa}  \dv \S(\nabla \vu_\eps) +  \eps^{\kappa_1} \nabla p(\rho_\eps) &=& \eps^{\kappa_2}  \rho_\eps \vc{f} & \qquad  \text{in } (0,T) \times \Omega_\eps , \\
          \vu_\eps &=& 0 & \qquad  \text{on } (0,T) \times \partial \Omega_\eps, \\
          \rho_\eps(0,\cdot) = \rho_{\eps0},   \quad (\rho_\eps\vu_\eps)(0,\cdot) &=& \vc{m}_{\eps0} & \qquad  \text{in } \Omega_\eps.
    \end{array}
    \right.
\end{align}
Formally, as long asymptotically a linear drag relation prevails at the holes, which is proportional to the hole size $\eps^\alpha$ and to $\eps^{\koppa}$, one expects the total drag to be of order $\eps^{{\koppa} + \alpha - 3}$ since $\eps^{-3}$ is the number density of holes. To match the largest term, a convenient choice is $\kappa_1 = \kappa_2 = \min\{0,{\koppa}, {\koppa} + \alpha - 3\}$. (For smaller values of $\kappa_1$, the limit equation as $\eps \to 0$ is expected to become incompressible.) By rescaling the equations to the $\eps^\alpha$ length scale of the holes, one obtains the condition ${\koppa} < \alpha$ for a small local Reynolds number, and the condition $\alpha < 3/2$ for local incompressibility. In particular, for ${\koppa} < \alpha$ and $\alpha < 3/2$, one expects Darcy's law as homogenization limit as $\eps \to 0$ (see also \cite[Conjecture~1.1]{HoeferKowalczykSchwarzacher21}).\footnote{With a different permeability $\mathcal K^{-1}$ that corresponds to the mobility of a single hole in the whole space $\R^3$.}
Outside of this regime, it seems very hard to obtain a limit system, even formally.
The physical condition (which is not reflected in any mathematical difficulty for the homogenization problem) of the mean free path length to be small compared to the small length scale $\eps^\alpha$ gives rise to the additional constraint $\koppa - 3(\alpha - 1)  \geq 0$.  The different regimes are depicted in Figure~\ref{fig:regimes}. Our present result corresponds to $\alpha =1$ and any values ${\koppa} = 2 - \lambda  < 1$ (depending on $\gamma$).

These scaling regimes might be compared to the ones for the incompresssible Navier-Stokes equations (see \cite[Figure~1]{Hoefer2022}), which are much richer because the condition $\alpha < 3/2$ only arises due to compressibility. In particular, it does not seem to be possible to obtain a Brinkman type compressible homogenization limit.
Unfortunately, the relative energy method that we use here does not directly apply to cases $\alpha > 1$. However, we hope that our ansatz can eventually contribute to resolving this regime.  Here, we just point out why the current method breakes down for $\alpha \in (1,3/2)$: In this case, the functions $W$ and $\vc q$ should be replaced by those oscillating functions introduced by Allaire \cite{Allaire90a, Allaire90b}
to account for the large distance between the particles compared to their diameters. In particular,  $\|\vc q(\cdot / \eps^\alpha)\|_{L^\infty(\Omega)} \sim \eps^{-\alpha}$ and this implies that the $L^p$-norm of $\nabla r_\eps$ defined as in \eqref{r.w.formal} explodes as $\eps \to 0$ for all $p \in [1,\infty]$. However, boundedness of $\|\nabla r_\eps\|_{L^\infty(\Omega)}$ is crucially used in our proof, see the estimates of $I_4$ and $I_5$ in Section \ref{sec:estimate.errors}.

\subsection{Structure of the remainder of the paper}\label{sec:structure}
The rest of the paper is organized as follows. In Section \ref{sec:wksol}, we recall the concept of weak solutions to the primitive system \eqref{NSE}, introduce the relative energy, and state our main result. In Section \ref{sec:bds}, we obtain uniform bounds for the involved functions, as well as a pressure decomposition crucial for our analysis. Section \ref{sec:proof} is devoted to the proof of the main result in the case $\Omega = \mathbb T^3$. After some estimates on the corrector functions $\vv_\eps, r_\eps$ introduced above,  we use them as test functions in the relative energy and manipulate and estimate the error terms. Finally, in Section \ref{sec:bdDom} we explain how to adapt the proof from the torus setting to bounded domains. The main point is that in general $\vv_\eps \neq 0$ on $\partial \Omega$ such that $\vv_\eps$ is no longer an admissible test function in the relative energy. 

\medskip

\paragraph{\textbf{Notation:}} Throughout the paper, we will use the standard notations for Lebesgue and Sobolev spaces, and denote them even for vector- or matrix-valued functions as in scalar case, that is, $L^p(\Omega)$ instead of $L^p(\Omega; \R^3)$. We write $a \ls b$ if there is a constant $C>0$ that is independent of $a,b,$ and $\eps$ such that $a \leq C b$. The constant $C$ might change its value whenever it occurs. For two matrices $A,B \in \R^{3 \times 3}$, the Frobenius inner product is denoted by $A:B = {\rm tr}(A^T B) = \sum_{i,j=1}^3 A_{ij} B_{ij}$. For a function $f$ defined on a domain $D\subset \R^3$, we denote the mean value by $\fint_D f \dd x = \frac{1}{|D|} \int_D f \dd x$. Lastly, we define the divergence of a matrix $A \in \R^{3\times 3}$ column-wise as $(\dv A)_j = \dv(A \e_j)$ for $j \in \{1,3\}$, where $\e_j$ is the $j$-th canonical basis vector. Also, we define the gradient of a vector $a \in \R^3$ as $(\nabla a)_{ij} = \partial_i a_j = \partial a_j / \partial x_i$. We emphasize that this does not follow the standard definition of these operators, however, it leans our notation in order not to have transpositions on matrices. Additionally, this definition makes the gradient of a scalar function a column- rather than a row-vector, so no implicit convention has to be made to add the velocity $\vu$ and the pressure gradient $\nabla p$ in the systems under consideration.

\section{Notion of weak solutions and statement of the main result}\label{sec:wksol}

\subsection{Weak solutions of the primitive system}
In this section, we recall the concept of renormalized finite energy weak solutions. Global existence of  renormalized finite energy weak solutions can be shown as in \cite{Lions1998} and \cite{FeireislNovotnyPetzeltova2001}, see also \cite{Feireisl2002}.

\begin{defi}\label{defin:WeakSolTime}
Let $T>0$ be fixed, $\gamma>3/2$, and assume for the initial data
\begin{align*}
\rho_\eps(0,\, \cdot)=\rho_{\eps0},\quad (\rho_\eps\vu_\eps)(0,\, \cdot)=\vc{m}_{\eps0},
\end{align*}
together with the compatibility conditions
\begin{align*}
\rho_{\eps0}\geq 0 \text{ a.e.~in } \Omega_\eps,\quad \rho_{\eps0}\in L^\gamma(\Omega_\eps),\quad \vc{m}_{\eps0}=0\text{ whenever } \rho_{\eps0}=0,\quad \vc{m}_{\eps0}\in L^\frac{2\gamma}{\gamma+1}(\Oe),\quad  \frac{|\vc{m}_{\eps0}|^2}{\rho_{\eps0}}\in L^1(\Omega_\eps).
\end{align*}
We call a couple $(\rho_\eps,\vu_\eps)$ a \emph{renormalized finite energy weak solution} to system \eqref{NSE} in the time-space cylinder $(0,T)\times \Omega_\eps$ if:
\begin{itemize}
\item It holds
\begin{gather*}
\rho_\eps\geq 0 \text{ a.e. in $(0,T)\times \Omega_\eps$,}\quad \rho_\eps\in C(0,T;\, L_\mathrm{weak}^{\gamma}(\Omega_\eps)),\\
\int_{\Omega_\eps}\rho_\eps(\tau, \cdot) \dd x=\int_{\Omega_\eps} \rho_{\eps0} \dd x \ \text{ for any $\tau\in (0,T)$},\\
\vu_\eps\in L^2(0,T;\,W^{1,2}_0(\Omega_\eps)),\quad \rho_\eps\vu_\eps\in C(0,T;\, L_\mathrm{weak}^\frac{2\gamma}{\gamma+1}(\Omega_\eps)).
\end{gather*}
Here,  $h \in C(0,T;\, L_\mathrm{weak}^p(\Omega_\eps))$ means that the map $t \mapsto h(t,\cdot)$ is continuous in the weak topology  of $L^p(\Oe)$.

\item We have for any $0\leq \tau\leq T$ and any $\psi\in C_c^\infty([0,T)\times \Omega_\eps)$
\begin{align}\label{contEq}
\int_0^\tau \int_{\Omega_\eps}\rho_\eps \partial_t\psi+\rho_\eps\vu_\eps\cdot\nabla\psi \dd x \dd t = \int_{\Omega_\eps}\rho_\eps(\tau, \cdot)\psi(\tau, \cdot)\dd x-\int_{\Omega_\eps} \rho_{\eps0} \psi(0,\cdot) \dd x;
\end{align}

\item We have for any $0\leq\tau\leq T$ and any $\phi\in C_c^\infty([0,T)\times \Omega_\eps;\R^3)$
\begin{align}\label{eq:Momentum}
\begin{split}
&\int_0^\tau \int_{\Omega_\eps} \eps^\lambda(\rho_\eps\vu_\eps \cdot \partial_t \phi+\rho_\eps \vu_\eps\otimes \vu_\eps:\nabla\phi) - \eps^2 \S(\nabla \vu_\eps):\nabla\phi + p(\rho_\eps)\dv \phi + \rho_\eps \vc{f} \cdot \phi \dd x \dd t\\
&\qquad =\int_{\Omega_\eps} \eps^\lambda (\rho_\eps\vu_\eps)(\tau, \cdot)\phi(\tau, \cdot) \dd x-\int_{\Omega_\eps} \eps^\lambda \vc{m}_{\eps0}\phi(0,\, \cdot)\dd x;
\end{split}
\end{align}

\item The energy inequality
\begin{align}\label{EnergIneTime}
\begin{split}
&\int_{\Omega_\eps} \frac12 \eps^\lambda \rho_\eps|\vu_\eps|^2(\tau,\, \cdot)+ H(\rho_\eps) \dd x+\int_0^\tau\int_{\Omega_\eps} \eps^2 \S(\nabla\vu_\eps):\nabla\vu_\eps \dd x \dd t\\
&\qquad \leq \int_{\Omega_\eps} \eps^\lambda \frac{|\vc{m}_{\eps0}|^2}{2\,\rho_{\eps0}}+ H(\rho_{\eps0}) \dd x + \int_0^\tau\int_{\Omega_\eps} \rho_\eps\vc{f} \cdot \vu_\eps \dd x \dd t
\end{split}
\end{align}
holds for almost every $0\leq \tau\leq T$, where the so-called pressure potential $H$ is defined via
\begin{align}\label{defPressPot}
s H'(s)-H(s)=p(s), \qquad H(1) = 0.
\end{align}

\item The zero extension $(\tilde{\rho}_\eps,\tilde{\vu}_\eps)$ satisfies in $\mathcal{D}^\prime((0,T)\times\R^d)$
\begin{align}\label{eq:renormTime}
\partial_t \tilde{\rho}_\eps+\dv(\tilde{\rho}_\eps\tilde{\vu}_\eps)=0,\quad \partial_t b(\tilde{\rho}_\eps)+\dv(b(\tilde{\rho}_\eps)\tilde{\vu}_\eps)+(\tilde{\rho}_\eps b^\prime(\tilde{\rho}_\eps)-b(\tilde{\rho}_\eps))\dv \tilde{\vu}_\eps=0
\end{align}
for any $b\in C([0,\infty)) \cap C^1((0,\infty))$ with $|b'(z)z| \lesssim z^\varpi + z^\frac{\gamma}{2}$ for $z>0$ and a certain $\varpi \in (0, \gamma/2)$.
\end{itemize}
\end{defi}

\subsection{Relative energy}
\label{sec:relativeEnergy}

The proof of our main result is based on the relative energy inequality for solutions to the compressible NSE, which is sometimes also called relative entropy inequality. It has been shown in \cite{FeireislJinNovotny2012} that any renormalized finite energy weak solution satisfies the following relative energy inequality (see also \cite{FeireislNovotnySun2011}).

For smooth functions $(r,\vv)\in C^\infty([0,T]\times \overline \Oe)$ with $r>0$ in $[0,T]\times \overline \Oe$ and $\vv = 0$ on $[0,T] \times \partial \Oe$, we define the relative energy as
\begin{align} \label{def.E}
    E_\eps(\rho_\eps, \vu_\eps | r, \vv)(\tau) = \int_\Oe \frac12 \eps^\lambda \rho_\eps |\vu_\eps-\vv|^2 + H(\rho_\eps)-H'(r)(\rho_\eps-r)-H(r) \dd x,
\end{align}
where $H$ is as in \eqref{defPressPot}. Then, for almost any $\tau\in [0,T]$, the relative energy inequality reads
\begin{align}\label{relen}
\begin{aligned}
     E_\eps(\rho_\eps, \vu_\eps | r, \vv)(\tau)  + \int_0^\tau \int_\Oe \eps^2 &\big( \S(\nabla \vu_\eps) - \S(\nabla \vv) \big) : (\nabla\vu_\eps-\nabla\vv \big) \dd x \dd t \\
     &\leq  E_\eps(\rho_\eps, \vu_\eps | r, \vv)(0)  + \int_0^\tau \mathcal{R}_\eps(\rho_\eps,\vu_\eps | r, \vv) \dd t,
\end{aligned}
\end{align}
where the remainder $\mathcal{R}_\eps$ is given by
\begin{align*}
    \mathcal{R}_\eps(\rho_\eps,\vu_\eps | r,\vv) &= \int_\Oe \eps^\lambda \rho_\eps (\partial_t \vv + (\vu_\eps \cdot \nabla) \vv) \cdot (\vv - \vu_\eps) \dd x + \int_\Oe \eps^2 \S(\nabla\vv) : \nabla (\vv - \vu_\eps) \dd x \\
    &\quad + \int_\Oe \rho_\eps \vc{f} \cdot (\vu_\eps - \vv) \dd x + \int_\Oe (r-\rho_\eps)\partial_t H'(r) + \nabla H'(r)\cdot (r\vv - \rho_\eps\vu_\eps) \dd x \\
    &\quad - \int_\Oe \dv\vv (p(\rho_\eps)-p(r)) \dd x.
\end{align*}
Let us remark that the scaling in $\eps$ can be obtained straightforwardly from the form of the relative  energy in \cite{FeireislNovotnySun2011, FeireislJinNovotny2012} by a rescaling of the time and space variables.
 Let us also remark that the relative energy turns into the standard energy inequality \eqref{EnergIneTime} by setting $\vv=0$ and $r=\fint_\Oe \rho_\eps \dd x$ and using mass conservation. 

\begin{rem} \label{rem:rel.energy}
The regularity of $\vv$ and $r$ in the relative energy inequality can be weakened, see \cite[Section~4]{FeireislNovotnySun2011} and \cite[Section~3.2.2]{FeireislJinNovotny2012}. In particular, we can apply it with $(r,\vv) \in W^{1,\infty}((0,T) \times \Oe)$ with $r \geq \underline{r} > 0$ in $[0,T]\times \overline \Oe$ and $\vv = 0$ on $[0,T] \times \partial \Oe$.
\end{rem}

\subsection{Main result}

\begin{theorem}\label{thm1}
  Let $\Omega = \mathbb T^3$ or a smooth bounded domain in $\R^3$.  Let $T > 0$, $\gamma \geq  2$, $\lambda > \lambda_0$, $\vc f \in L^\infty((0,T)  \times \Omega)$, and let  $\Omega_\eps$ be as in \eqref{omegaeps} and $(\rho_{\eps0},\vc{m}_{\eps0})$ be as in Definition \ref{defin:WeakSolTime} such that
   \begin{align} \label{est.initial}
     \eps^\lambda \|\rho_{\eps 0} |\vu_{\eps 0}|^2\|_{L^1(\Oe)} + \|\rho_{\eps 0}\|_{L^\gamma(\Oe)}  \lesssim 1,
\end{align}
    where $\vu_{\eps 0}$ is such that $\rho_{\eps 0} \vu_{\eps 0} = \vc m_{\eps 0}$. Let $(\rho_\eps, \vu_\eps)$ be a renormalized finite energy weak solution to the NSE \eqref{NSE}, and let $\rho, \vu\in  W^{1,\infty}((0,T) \times \Omega)$ with $\vu \in L^{\infty}(0,T;W^{2,2}(\Omega)) \cap W^{1,\infty}(0,T;W^{1,\infty}(\Omega))$ be a strong solution to Darcy's law \eqref{limit.system} with $\rho \geq \underline{\rho} $ for some constant $\underline{\rho} > 0$. Then, there exists $\eps_0 > 0$  such that for all $\eps < \eps_0$
\begin{align} \label{est.thm}
\begin{aligned}
    &\|\rho_\eps - \rho\|_{L^\infty(0,T;L^2(\Oe))}^2  + \|\vu_\eps - \vu\|_{L^2(0,T; W^{-1,2}(\Omega))}^2 \\
    &\leq C \bigg(  \int_\Oe H(\rho_{\eps 0}) - H'(\rho_0)(\rho_{\eps 0} - \rho_0) - H(\rho_0) \dd x +  \eps^\lambda \|\rho_{\eps 0} |\vu_{\eps 0}|^2\|_{L^1(\Oe)} + \eps^{ \beta} \bigg),
    \end{aligned}
\end{align}
where  $C>0$ is an absolute constant which is independent of $\eps$, and where $\beta>0$ and $\lambda_0>1$ are given by
    \begin{align} \label{lambda_0}
        \lambda_0 &= \begin{cases}
            1 + \frac3\gamma & \text{if } \gamma \geq 3,\\
            \frac53 + \frac1\gamma & \text{if } 2 \leq \gamma < 3,
        \end{cases}\\
        \beta &= \min \bigg\{1, 2\lambda-2, \lambda-\frac3\gamma \bigg\} \ \text{ if } \ \Omega = \mathbb T^3,\\
        \beta &= \min \bigg\{\frac12, 2\lambda-2, \lambda-\frac3\gamma \bigg\} \ \text{ if $\Omega \subset \R^3$ is a bounded domain}.
    \end{align}
    Moreover, if $2 \leq \gamma < 3$, we require  the initial data additionally to satisfy
    \begin{align} \label{zusatz}
        \eps^{2\lambda - 5} \bigg( \int_\Oe H(\rho_{\eps 0}) - H'(\rho_0)(\rho_{\eps 0} - \rho_0) - H(\rho_0) \dd x + \eps^\lambda \|\rho_{\eps 0} |\vu_{\eps 0}|^2\|_{L^1(\Oe)} + \eps^\beta \bigg) \leq \delta_0,
    \end{align}
   for some $\delta_0>0$ sufficiently small but independent of $\eps$.
\end{theorem}

\begin{rem}
    \begin{enumerate}[(i)]
    \item Note that the condition $\lambda > \lambda_0= \frac53 + \frac1\gamma$ for $\gamma < 3$ is actually redundant since \eqref{zusatz} enforces that. We remark that our proof shows that for $2 \leq \gamma \leq 3$ the assertion remains valid for $\lambda_0 = 1 + \tfrac 3 \gamma$ without the additional assumption \eqref{zusatz}. However, as $\tfrac 5 3 + \tfrac 1 \gamma < 1 + \frac{3}{\gamma}$ for $\gamma < 3$ ($\frac{13}{6} \approx 2.17$ instead of $2.5$ for $\gamma = 2$), the additional assumption \eqref{zusatz} allows to lower $\lambda_0$ in this case.

    \item  In view of \eqref{diff.H}, the first term on the right-hand side of  \eqref{est.thm} can be understood as a strengthened version of  $\|\rho_{\eps 0} - \rho_0\|_{L^2(\Oe)}$, and in fact our proof shows that we could also replace the left-hand side term $\|\rho_\eps - \rho\|_{L^\infty(0,T;L^2(\Oe))}^2$ correspondingly in terms of $H$.
    \item Existence of a strong solution to the target system \eqref{limit.system} can be reduced to existence of strong solutions to the porous medium type equation \eqref{FP}, since $\vu$ is then obtained by solving a linear equation. Regarding \eqref{FP}, existence in the case $\vc f = 0$ is standard, see \cite[Proposition~7.21 and Chapter~8.2]{Vazquez2007}. Due to the quadratic lower order term $\dv (\rho^2 \mathcal{K} \vc f)$, the existence of global strong solutions for $\vc f \neq 0$  has not been addressed much in the literature. For attempts in this direction, see \cite{DiazKersner1987, Gilding1989} for the one-dimensional case, and \cite{Watanabe1995} for a multi-dimensional study.
    
  \item We remark that the right-hand side of \eqref{est.thm} can be equivalently written as $E_\eps(\rho_{\eps0},\vu_{\eps0}|\rho_0,\vu(0,\cdot)) + \eps^{\beta}$ (see Section \ref{sec:relativeEnergy} for the definition of $E_\eps$). Indeed, since $\|\rho_{\eps 0}\|_{L^1(\Oe)} \lesssim 1$ due to \eqref{est.initial}, we have
  \begin{align}
   \qquad \quad    \eps^\lambda \|\rho_{\eps 0} |\vu_{\eps 0} - \vu(0,\cdot)|^2\|_{L^1(\Oe)} +  \eps^{\beta}  \lesssim   \eps^\lambda \|\rho_{\eps 0} |\vu_{\eps 0}|^2\|_{L^1(\Oe)} + \eps^{\beta}  \lesssim \eps^\lambda \|\rho_{\eps 0} |\vu_{\eps 0} - \vu(0,\cdot)|^2\|_{L^1(\Oe)} + \eps^{\beta}.
  \end{align}
  It is possible to choose $\rho_{\eps0} = \rho_0$ and $\vu_{\eps0} = \vu(0,\cdot)$ such that the error on the right-hand side of \eqref{est.thm} is just $\eps^{\beta}$.

  \item Darcy's law in the limit system \eqref{limit.system} is (quasi-)stationary. The fluid velocity  $\vu$ is instantaneously determined through $\rho$ and $\vc f$. As expected, \eqref{est.thm} thus ensures convergence of the fluid velocity (in $L_t^2 W_x^{-1,2}$) without requiring convergence of the initial datum. The term $\eps^\lambda \|\rho_{\eps 0} |\vu_{\eps 0}|^2\|_{L^1(\Oe)}$ on the right-hand side of \eqref{est.thm} is small unless $\vu_{\eps 0}$ is too ill-prepared.  
 This term can be  related to an initial layer of thickness  $\eps^\lambda$. We emphasize that in \cite{Masmoudi02} (where $\lambda=4$)  weak convergence $\vu_\eps \wto \vu$ in $L^2((0,T) \times \Omega)$ is shown even when $\eps^\lambda \|\rho_{\eps 0} |\vu_{\eps 0}|^2\|_{L^1(\Oe)}$ is only bounded. Due to the initial layer, we cannot expect strong convergence, though. This is reminiscent of the weak and strong convergence results \cite[Theorem~1.3 and Theorem~1.4]{Hoefer2022} for a related homogenization problem.
    
        \item Note that strong convergence $\vu_\eps \to \vu$ in a nonnegative spatial Sobolev norm cannot be expected since $\vu_\eps = 0$ but $\vu \neq 0$ in $\Omega \setminus \Omega_\eps $.\footnote{Only comparing the $L^2$ norm on $\Omega_\eps$ doesn't overcome this problem due to rapid oscillations of $\vu_\eps$ on the $\eps$-scale.}
        However, we will prove the following $L^2$-estimate when comparing with the modified limit velocity field $\vv_\eps$ from \eqref{r.w.formal}:
        \begin{align*}
            \|\vu_\eps -\vv_\eps\|_{L^2((0,T) \times \Oe)}^2 \lesssim  \int_\Oe H(\rho_{\eps 0}) - H'(\rho_0)(\rho_{\eps 0} - \rho_0) - H(\rho_0) \dd x +  \eps^\lambda \|\rho_{\eps 0} |\vu_{\eps 0}|^2\|_{L^1(\Oe)} + \eps^{\beta}.
        \end{align*}

        \item In view of the formal two-scale expansion, one could expect that an optimal error estimate should hold with $\beta = 2$.
    We are not able to prove such an estimate, though, even if $\Omega = \mathbb T^3$ and for arbitrary large values of $\lambda$ and $\gamma$. This is related to lack of regularity of the pressure: We face errors of the form (see $I_3$ and $I_5$ in Section \ref{sec:estimate.errors})
        $\langle \mathfrak{R}_\eps,r_\eps - \rho_\eps \rangle $ or $\langle \mathfrak{R}_\eps,p(r_\eps) - p(\rho_\eps) \rangle $, where $\mathfrak{R}_\eps$ is a  term of order $O(\eps)$ but only in a negative Sobolev norm. Thus, we would need bounds of the form $|\nabla (r_\eps - \rho_\eps)| \lesssim  \eps$ to deduce a quadratic error in $\eps$. Such bounds seem to be out of reach with the current methods. 

        It is well-known for related homogenization problems (see in particular \cite{Shen2022}) that due to a boundary layer the convergence rate is slower in bounded domains than on the torus. More precisely, one would expect that \eqref{est.thm} with $\beta = 1$
        is optimal in bounded domains. Since the aforementioned lack of regularity and the boundary layer combine in a multiplicative way, we only obtain \eqref{est.thm} with $\beta = \min\{ \tfrac 1 2, 2\lambda-2, \lambda - \frac3\gamma \}$ in bounded domains.
    \end{enumerate}
\end{rem}

\section{Uniform a priori estimates}\label{sec:bds}
\subsection{Uniform bounds for the density and velocity}

\begin{prop}
    Under the assumptions of Theorem \ref{thm1}, we have
 \begin{align}\label{bds}
    \eps^\lambda \|\rho_\eps |\vu_\eps|^2\|_{L^\infty(0,T;L^1(\Oe))} + \|\vu_\eps\|_{L^2((0,T)\times \Oe)}^2 + \eps^2 \|\nabla \vu_\eps\|_{L^2((0,T)\times \Oe)}^2 + \|\rho_\eps\|_{L^\infty(0,T;L^\gamma(\Oe))}^\gamma \lesssim 1.
\end{align}

\end{prop}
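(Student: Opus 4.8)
The plan is to derive \eqref{bds} directly from the energy inequality \eqref{EnergIneTime}, combined with the Poincaré inequality on the perforated domain $\Oe$ and an absorption (Grönwall-type) argument to handle the forcing term. First I would record the structural facts about the pressure potential $H$: from \eqref{p} and \eqref{defPressPot}, the assumption $\gamma > 3/2$ gives $H(s) \gtrsim s^\gamma - C$ for all $s \geq 0$ (and $H(s) \geq 0$ for $s$ away from $1$, with $H$ bounded below globally), so that $\int_\Oe H(\rho_\eps)(\tau,\cdot)\dd x \gtrsim \|\rho_\eps(\tau,\cdot)\|_{L^\gamma(\Oe)}^\gamma - C|\Oe|$, and similarly the initial term $\int_\Oe H(\rho_{\eps 0})\dd x \lesssim \|\rho_{\eps 0}\|_{L^\gamma(\Oe)}^\gamma + C|\Oe| \lesssim 1$ by \eqref{est.initial}. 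Likewise the dissipation term controls $\eps^2\|\nabla\vu_\eps\|_{L^2}^2$ from below: by the algebraic structure of $\S$ and Korn's inequality (or directly, since $\vu_\eps \in W_0^{1,2}(\Oe)$, $\int \S(\nabla\vu_\eps):\nabla\vu_\eps \dd x \gtrsim \|\nabla\vu_\eps\|_{L^2(\Oe)}^2$, using $\eta \geq 0$ and $\mu = 1$).

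The second ingredient is the Poincaré inequality on $\Oe$: because the holes $\obst_i^\eps$ are spread through $\Omega$ at scale $\eps$ with $\obst$ occupying a fixed fraction of each cell, any $\vv \in W_0^{1,2}(\Oe)$ (extended by zero) satisfies $\|\vv\|_{L^2(\Oe)} \lesssim \eps \|\nabla\vv\|_{L^2(\Oe)}$, with an $\eps$-independent constant — this is the standard scaling of Poincaré on a single cell $\eps(Q\setminus\obst)$ transported to all cells. This is presumably the inequality \eqref{Poinc} referred to in the introduction. Applying it to $\vu_\eps(t,\cdot)$ and integrating in time yields $\|\vu_\eps\|_{L^2((0,T)\times\Oe)}^2 \lesssim \eps^2\|\nabla\vu_\eps\|_{L^2((0,T)\times\Oe)}^2$, so the second term in \eqref{bds} is controlled once the third is.

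With these in hand, I would estimate the forcing term on the right-hand side of \eqref{EnergIneTime} using $\vc f \in L^\infty$, Cauchy–Schwarz, mass conservation $\|\rho_\eps(\tau,\cdot)\|_{L^1(\Oe)} = \|\rho_{\eps 0}\|_{L^1(\Oe)} \lesssim \|\rho_{\eps 0}\|_{L^\gamma(\Oe)} \lesssim 1$, and then a splitting $\int \rho_\eps \vc f \cdot \vu_\eps \lesssim \|\vc f\|_{L^\infty}\|\sqrt{\rho_\eps}\|_{L^2}\|\sqrt{\rho_\eps}\vu_\eps\|_{L^2}$: write this as $\lesssim (\text{const}) + \tfrac12 \int \rho_\eps|\vu_\eps|^2$, i.e. bound $\int_0^\tau\int_\Oe \rho_\eps \vc f\cdot\vu_\eps \leq C(T) + C\int_0^\tau \int_\Oe \rho_\eps|\vu_\eps|^2 \dd x \dd t$. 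This last term is not directly absorbable into the left side of \eqref{EnergIneTime} because of the $\eps^\lambda$ prefactor; the clean way is to note that it equals $\eps^{-\lambda}\cdot \eps^\lambda\int\rho_\eps|\vu_\eps|^2$ and apply Grönwall to $y(\tau) := \int_\Oe \tfrac12\eps^\lambda\rho_\eps|\vu_\eps|^2(\tau,\cdot) + H(\rho_\eps)(\tau,\cdot)\dd x$, giving $y(\tau) \leq (y(0) + C(T))\,e^{C T/\eps^\lambda}$ — which blows up.

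Thus the main obstacle, and the point requiring care, is the $\eps^\lambda$ in front of the kinetic energy: a naive Grönwall loses all uniformity. The remedy is to absorb the velocity instead through the \emph{dissipation}, not the kinetic energy: estimate $\int_0^\tau\int_\Oe \rho_\eps\vc f\cdot\vu_\eps \leq \|\vc f\|_\infty \|\rho_\eps\|_{L^\infty_t L^{6/5}_x}\|\vu_\eps\|_{L^1_t L^6_x}$, or more simply $\leq \|\vc f\|_\infty\|\rho_\eps\|_{L^\infty_t L^2_x}^{1/2}\cdots$; the robust choice is $\int\rho_\eps\vc f\cdot\vu_\eps \leq \delta\eps^2\|\nabla\vu_\eps\|_{L^2(\Oe)}^2 + C_\delta\eps^{-2}\|\rho_\eps\vc f\|_{L^2(\Oe)}^2$ and then use Poincaré backwards — but $\eps^{-2}$ is again bad. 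The genuinely correct route, which I would pursue, is: bound $|\int\rho_\eps\vc f\cdot\vu_\eps| \leq \|\vc f\|_\infty \|\rho_\eps\|_{L^{6/5}}\|\vu_\eps\|_{L^6} \lesssim \|\rho_\eps\|_{L^\gamma}^{?}\,\|\nabla\vu_\eps\|_{L^2}$ (Sobolev, using $\gamma \geq 2 > 6/5$), then $\leq \delta\|\nabla\vu_\eps\|_{L^2}^2 + C_\delta\|\rho_\eps\|_{L^\gamma}^2$; since $\gamma\geq 2$, $\|\rho_\eps\|_{L^\gamma}^2 \lesssim 1 + \|\rho_\eps\|_{L^\gamma}^\gamma \lesssim 1 + \int H(\rho_\eps)\dd x$, so this is $\leq \delta\|\nabla\vu_\eps\|_{L^2}^2 + C_\delta(1 + y(\tau))$ with $y(\tau) := \int_\Oe\tfrac12\eps^\lambda\rho_\eps|\vu_\eps|^2 + H(\rho_\eps)\,\dd x$. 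Plugging into \eqref{EnergIneTime}, choosing $\delta$ small to absorb $\delta\eps^2\|\nabla\vu_\eps\|_{L^2}^2 \leq \delta\|\nabla\vu_\eps\|_{L^2}^2$ into the (clean, prefactor-one after dividing by $\eps^2$... no) — here one keeps the $\eps^2$: $\delta\|\nabla\vu_\eps\|^2$ is \emph{larger} than the dissipation $\eps^2\|\nabla\vu_\eps\|^2$, so instead bound $|\int\rho_\eps\vc f\cdot\vu_\eps|$ via Poincaré first: $\|\vu_\eps\|_{L^2(\Oe)} \lesssim \eps\|\nabla\vu_\eps\|_{L^2(\Oe)}$ gives $|\int\rho_\eps\vc f\cdot\vu_\eps| \leq \|\vc f\|_\infty\|\rho_\eps\|_{L^2}\|\vu_\eps\|_{L^2} \leq \delta\eps^2\|\nabla\vu_\eps\|_{L^2}^2 + C_\delta\|\rho_\eps\|_{L^2}^2 \leq \delta\eps^2\|\nabla\vu_\eps\|_{L^2}^2 + C_\delta(1+y(\tau))$, now absorbing $\delta\eps^2\|\nabla\vu_\eps\|_{L^2}^2$ into the genuine dissipation term $\eps^2\int\S(\nabla\vu_\eps):\nabla\vu_\eps$. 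The resulting differential inequality $y(\tau) \leq y(0) + C\int_0^\tau(1 + y(t))\dd t$ has an $\eps$-\emph{independent} constant $C$, so Grönwall gives $y(\tau) \lesssim (y(0)+1)e^{CT} \lesssim 1$ uniformly in $\eps$ using \eqref{est.initial}. From $y \lesssim 1$ uniformly one reads off $\eps^\lambda\|\rho_\eps|\vu_\eps|^2\|_{L^\infty_t L^1_x} \lesssim 1$ and $\|\rho_\eps\|_{L^\infty_t L^\gamma_x}^\gamma \lesssim 1$; feeding $y \lesssim 1$ back into \eqref{EnergIneTime} bounds the time-integrated dissipation $\eps^2\|\nabla\vu_\eps\|_{L^2((0,T)\times\Oe)}^2 \lesssim 1$, and then Poincaré gives $\|\vu_\eps\|_{L^2((0,T)\times\Oe)}^2 \lesssim 1$. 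This establishes \eqref{bds}.
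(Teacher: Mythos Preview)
Your final argument is correct and is essentially the paper's own proof: bound the forcing term via $|\int_\Oe \rho_\eps \vc f \cdot \vu_\eps| \leq \|\vc f\|_\infty \|\rho_\eps\|_{L^2}\|\vu_\eps\|_{L^2}$, use the Poincar\'e inequality $\|\vu_\eps\|_{L^2(\Oe)} \lesssim \eps\|\nabla\vu_\eps\|_{L^2(\Oe)}$ to produce $\delta\eps^2\|\nabla\vu_\eps\|_{L^2}^2 + C_\delta\|\rho_\eps\|_{L^2}^2$, absorb the first piece into the dissipation (via Korn), control $\|\rho_\eps\|_{L^2}^2 \lesssim 1 + \int_\Oe H(\rho_\eps)$ using $\gamma \geq 2$, and apply Gr\"onwall to the resulting inequality with $\eps$-independent constants. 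The exploratory dead-ends (absorbing into kinetic energy, or using $\|\vu_\eps\|_{L^6}$ without Poincar\'e) should simply be deleted from a clean write-up, but the route you settle on matches the paper line for line.
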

\begin{proof}
First, we recall the Poincar\'e inequality in the perforated domain, that for all $q \in [1,\infty]$
\begin{align}\label{Poinc}
    \|\varphi\|_{L^q(\Oe)} \lesssim \eps \|\nabla \varphi\|_{L^q(\Oe)} \quad \text{for all } \varphi \in W^{1,q}(\Omega) \text{ with } \varphi = 0 \text{ in } \Omega \setminus \Oe , 
\end{align}
which is standard, see \cite[Lemma 1]{Tartar80}.

To derive \emph{a priori} uniform bounds for the density $\rho_\eps$ and the velocity $\vu_\eps$, we use the energy inequality \eqref{EnergIneTime} and mass conservation, i.e., for almost every $\tau \in (0,T)$,
\begin{align*}
    E_\eps(\tau) + \int_0^\tau \int_\Oe \eps^2 \S(\nabla \vu_\eps) : \nabla\vu_\eps \dd x \dd t \leq E_\eps(0) + \int_0^\tau \int_\Oe \rho\vc{f}\cdot\vu_\eps \dd x,
\end{align*}
where we set $E_\eps(\tau) = E_\eps \left( \rho_\eps, \vu_\eps \Big| \fint_{\Oe} \rho_\eps \dd x, 0 \right)(\tau)$. The force term is estimated as
\begin{align*}
    \int_\Oe |\rho_\eps \vc{f} \cdot \vu_\eps| \dd x &\leq \|\rho_\eps\|_{L^2(\Oe)} \|\vu_\eps\|_{L^2(\Oe)} \leq C\|\rho_\eps\|_{L^2(\Oe)} \eps \|\nabla\vu_\eps\|_{L^2(\Oe)}\\
    &\leq C \|\rho_\eps\|_{L^2(\Oe)}^2 + \frac{1}{2C_K} \eps^2 \|\nabla\vu_\eps\|_{L^2(\Oe)}^2 \leq C \|\rho_\eps\|_{L^2(\Oe)}^2 + \int_\Oe \frac12 \eps^2 \S(\nabla\vu_\eps) : \nabla\vu_\eps \dd x,
\end{align*}
where we used inequality \eqref{Poinc}, and $C_K>0$ is the constant in Korn's inequality on $\Omega$, which can be used after extending $\vu_\eps$ by zero to $\Omega$. Finally, since $\gamma\geq 2$, the properties of $p$ (see \eqref{p}) and the definition and monotonicity of $H$ (see \eqref{defPressPot}) imply $\|\rho_\eps\|_{L^2(\Oe)}^2 \lesssim 1+\|\rho_\eps\|_{L^\gamma(\Oe)}^\gamma\lesssim 1+\|p(\rho_\eps)\|_{L^1(\Oe)} \lesssim 1+\|H(\rho_\eps)\|_{L^1(\Oe)}$, hence
\begin{align*}
    E_\eps(\tau) + \int_0^\tau \int_\Oe \frac12 \eps^2 \S(\nabla \vu_\eps) : \nabla\vu_\eps \dd x \dd t \lesssim 1 + E_\eps(0) + \int_0^\tau E_\eps(t) \dd t.
\end{align*}
Thus, applying Gr\"onwall's inequality, \eqref{Poinc} and \eqref{est.initial}, we conclude \eqref{bds}.
\end{proof}

\subsection{Improved pressure estimates}
Following the lines of \cite[Section~3.1]{Masmoudi02} and \cite[Section~3.3]{OschmannPokorny2023}, we extend the pressure to all of $\Omega$ and then split the extended pressure in a regular part and a small error. 

\begin{lem} \label{lem:pressure.decomp}
    There exists a pressure extension $P_\eps : \Omega \to [0,\infty)$ such that $P_\eps = p(\rho_\eps)$ in $\Omega_\eps$ and which is uniformly bounded in $L^\infty(0,T;L^1(\Omega))$. 
    Moreover, for $p \in [\frac {2\gamma}{\gamma-1},6]$
  \begin{align*}
    P_\eps \in & \,L^2(0,s;W^{1,2}( \Omega)) + \eps L^2((0,s)\times \Omega) + \eps^{\lambda - \frac{3(p-2)}{p}} L^1(0,s;L^{\frac{p \gamma}{p + 2\gamma}}(\Omega)) + \eps^{\lambda -1 - \frac{3(p-2)}{p}} L^1(0,s;W^{1,\frac{p \gamma}{p + 2\gamma}}(\Omega))   \\
      & + \eps^{\lambda} [W^{1,2}(0,s; L^{\frac{ 6\gamma}{5\gamma-6}}(\Omega))]' + \eps^{\lambda-1} [W^{1,2}(0,s; W^{-1,\frac{ 6\gamma}{5\gamma-6}}(\Omega))]'
     \\
      & + \eps^{\frac{\lambda}{2}+1} [W^{1,\infty}(0,s; L^{\frac{2\gamma}{\gamma-1}}(\Omega))]' + \eps^\frac{\lambda}{2} [W^{1,\infty}(0,s; W^{-1,\frac{2\gamma}{\gamma-1}}(\Omega))]',
\end{align*}  
with a uniform bound in $\eps$ in this space. In particular, for all $p \in [\frac {2\gamma}{\gamma-1},6]$ and all $\varphi \in W^{1,\infty}(0,s;L^\infty(\Omega))$,  
    \begin{align*}
       \int_0^s \int_{\Oe} p(\rho_\eps) \varphi \dd x \dd t \lesssim \|\varphi\|_{W^{1,\infty}(0,s;L_{\eps}^{2})} &+\eps^{\lambda - 1 - \frac{3(p-2)}{p}} \|\varphi\|_{W^{1,\infty}(0,s;L_{\eps}^{\frac{p\gamma}{(p-2)\gamma -p}})} +    \eps^{\lambda-1}\|\varphi\|_{W^{1,\infty}(0,s;L_{\eps}^{\frac{ 6\gamma}{5\gamma-6}})} \\
        &+ \eps^\frac{\lambda}{2}\|\varphi\|_{W^{1,\infty}(0,s;L_{\eps}^\frac{2\gamma}{\gamma-1})},  
    \end{align*}
    where 
    \begin{align}\label{i2}
       \|\cdot\|_{L^{q}_\eps} := \eps\|\cdot\|_{L^q(\Omega)} + \|\cdot\|_{[W^{1,q'}(\Omega)]'}.
    \end{align}
\end{lem}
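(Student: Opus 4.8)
The plan is to construct the pressure extension $P_\eps$ by solving, on each cell $Q_i^\eps$, an auxiliary Bogovskii-type problem that extends $p(\rho_\eps)$ from the fluid part $Q_i^\eps \setminus \obst_i^\eps$ into the hole $\obst_i^\eps$ while controlling the $L^1$-norm; this is exactly the construction of \cite[Section~3.1]{Masmoudi02} and \cite[Section~3.3]{OschmannPokorny2023}, so I would only recall it. Concretely, for a test function $\varphi \in C_c^\infty((0,s) \times \Omega)$ one tests the momentum equation \eqref{eq:Momentum} with $\phi = \mathcal B_\eps(\varphi - \fint \varphi)$, where $\mathcal B_\eps$ is the Bogovskii operator on $\Omega_\eps$ (rescaled cellwise), which satisfies $\dv \phi = \varphi - \fint\varphi$ in $\Omega_\eps$, $\phi = 0$ on $\partial\Omega_\eps$, and the $\eps$-dependent bounds $\|\phi\|_{L^q(\Oe)} \lesssim \eps \|\varphi\|_{L^q(\Oe)}$ and $\|\nabla\phi\|_{L^q(\Oe)} \lesssim \|\varphi\|_{L^q(\Oe)}$ (uniformly in $\eps$, after rescaling; here one uses that the cells are of size $\eps$). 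This produces an identity
\begin{align*}
\int_0^s \int_{\Oe} p(\rho_\eps)\Big(\varphi - \fint_\Oe \varphi\Big) \dd x \dd t &= \int_0^s \int_{\Oe} \eps^2 \S(\nabla\vu_\eps):\nabla\phi - \eps^\lambda (\rho_\eps\vu_\eps \cdot \partial_t\phi + \rho_\eps\vu_\eps\otimes\vu_\eps : \nabla\phi) - \rho_\eps\vc f\cdot\phi \dd x \dd t \\
&\quad + \eps^\lambda\Big[\int_\Oe \rho_\eps\vu_\eps \cdot \phi \dd x\Big]_0^s,
\end{align*}
and the mean part $\int p(\rho_\eps) \fint\varphi$ is handled by the uniform $L^\infty_t L^1_x$ bound on $p(\rho_\eps)$ coming from \eqref{bds} and the properties \eqref{p} of $p$.

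Next I would estimate each term on the right-hand side using the a priori bounds \eqref{bds}, interpolation, and Sobolev embedding, reading off the functional space to which each contribution belongs. The viscous term gives the $L^2(0,s;W^{1,2}(\Omega))$ piece (here one needs $\eps^2 \|\nabla\vu_\eps\|_{L^2}^2 \lesssim 1$ together with $\|\nabla\phi\|_{L^2} \lesssim \|\varphi\|_{L^2}$, so the dual pairing against $\nabla\phi$ reproduces $\eps$ times an $L^2$ function plus the $[W^{1,2}]'$ correction — this is the content of the norm $\|\cdot\|_{L^q_\eps}$ in \eqref{i2}). The convective term $\eps^\lambda \rho_\eps\vu_\eps\otimes\vu_\eps$ is estimated by Hölder and the interpolation $\|\vu_\eps\|_{L^p} \lesssim \eps^{1 - 3(p-2)/(2p)}$ (Poincaré \eqref{Poinc} plus Gagliardo--Nirenberg against $\eps^2\|\nabla\vu_\eps\|_{L^2}^2 \lesssim 1$), together with $\|\rho_\eps\|_{L^\gamma} \lesssim 1$; pairing against $\nabla\phi$ and tracking powers of $\eps$ yields the term with prefactor $\eps^{\lambda - 3(p-2)/p}$ in $L^1(0,s;L^{p\gamma/(p+2\gamma)}(\Omega))$ and its $W^{1,\cdot}$ companion (the latter because $\phi$ itself, not just $\nabla\phi$, appears when one does not integrate by parts, hence the shift by one derivative and the extra $\eps^{-1}$). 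The time-derivative term $\eps^\lambda \rho_\eps\vu_\eps \cdot \partial_t\phi$ is the reason for the dual-space summands $\eps^\lambda[W^{1,2}(0,s;L^{6\gamma/(5\gamma-6)})]'$ etc.: one writes $\int \rho_\eps\vu_\eps \cdot \partial_t\phi = -\langle \partial_t(\rho_\eps\vu_\eps), \phi\rangle$ in a negative time-Sobolev space, using $\rho_\eps\vu_\eps \in L^\infty_t L^{2\gamma/(\gamma+1)}_x$ and the boundary term $[\rho_\eps\vu_\eps\cdot\phi]_0^s$, estimated via $L^\infty_t L^{2\gamma/(\gamma-1)}$ duality, producing the $\eps^{\lambda/2+1}$ and $\eps^{\lambda/2}$ pieces after using that $\eps^{\lambda/2}\sqrt{\rho_\eps}\vu_\eps$ is bounded in $L^\infty_t L^2_x$. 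The force term $\rho_\eps\vc f\cdot\phi$ is lower order and absorbed into the $L^2$ piece.

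The last assertion — the pointwise-in-test-function estimate on $\int_0^s\int_{\Oe} p(\rho_\eps)\varphi$ for $\varphi \in W^{1,\infty}(0,s;L^\infty(\Omega))$ — follows by duality from the membership just established: each summand of the form $\eps^a X$ with $X$ a Bochner/dual space contributes $\eps^a \|\varphi\|$ in the corresponding predual norm, and since $\varphi \in W^{1,\infty}_t L^\infty_x$ controls all the relevant $W^{1,\infty}_t L^q_x$ norms on the bounded domain (up to constants depending on $|\Omega|$), one collects the four displayed terms after combining the $L^q(\Omega)$ and $[W^{1,q'}(\Omega)]'$ contributions into the single notation $\|\cdot\|_{L^q_\eps}$. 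I expect the main obstacle to be bookkeeping: getting the exponents of $\eps$ exactly right requires careful combination of (i) the cell size $\eps$ entering the Bogovskii bounds, (ii) the Poincaré/interpolation gain $\eps^{1-3(p-2)/(2p)}$ on $\|\vu_\eps\|_{L^p}$, and (iii) the extra factor $\eps^{-1}$ whenever $\phi$ rather than $\nabla\phi$ is tested, and checking that the range $p \in [\frac{2\gamma}{\gamma-1},6]$ is precisely what makes all the Sobolev embeddings ($W^{1,2}\hookrightarrow L^6$, $L^{2\gamma/(\gamma+1)}$ as the natural space for $\rho_\eps\vu_\eps$) admissible and the resulting Lebesgue exponents $\geq 1$. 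A secondary point is justifying the use of $\phi = \mathcal B_\eps(\varphi - \fint\varphi)$ as an admissible test function in the weak formulation \eqref{eq:Momentum} despite its limited time regularity, which is handled by a routine mollification-in-time argument.
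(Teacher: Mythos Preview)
Your approach is correct and closely parallel to the paper's; the difference is the choice of auxiliary operator. The paper works on the level of $\nabla P_\eps$: it introduces the restriction operator $R_\eps: W_0^{1,q}(\Omega)\to W_0^{1,q}(\Omega_\eps)$ from Mikeli\'c (recalled as Proposition~\ref{prop:restOp}), defines $\langle\nabla P_\eps,\phi\rangle := \langle\nabla p(\rho_\eps), R_\eps\phi\rangle$, and uses the divergence-preservation property of $R_\eps$ to conclude that this functional is indeed a gradient (the explicit cell-average formula for $P_\eps$ then comes from Allaire/Lipton--Avellaneda). You instead work on the level of $P_\eps$ itself by testing the momentum equation with $\mathcal B_\eps(\varphi-\fint\varphi)$. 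These are dual constructions: the bounds you quote for $\mathcal B_\eps$ (namely $\|\phi\|_{L^q}\lesssim\eps\|\varphi\|_{L^q}$ and $\|\nabla\phi\|_{L^q}\lesssim\|\varphi\|_{L^q}$) correspond exactly to the estimate \eqref{est.R_eps} for $R_\eps$, and the five contributions (viscous, convective, force, time-derivative, boundary-in-time) are estimated in the same way with the same interpolation \eqref{interpolation}. One small caution: a cellwise-rescaled Bogovskii only inverts data with zero mean \emph{on each cell}, so to obtain a Bogovskii on all of $\Omega_\eps$ with the uniform bounds you state, you need the two-step construction (global Bogovskii on $\Omega$ followed by cellwise correction to enforce vanishing on the holes, as in \cite{DieningFeireislLu17} or \cite{LuSchwarzacher18}); the paper's route via $R_\eps$ sidesteps this technicality. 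Also, for the time-derivative term the paper does not integrate by parts in $t$ as you suggest but directly bounds $\int\rho_\eps\vu_\eps\cdot\partial_t R_\eps\phi$ by $\|\rho_\eps\|_{L^\infty_t L^\gamma}\|\vu_\eps\|_{L^2_t L^6}\|\partial_t R_\eps\phi\|_{L^2_t L^{6\gamma/(5\gamma-6)}}$, which already places that piece in the dual of $W^{1,2}_t$; this is simpler than what you sketch and avoids having to make sense of $\partial_t(\rho_\eps\vu_\eps)$.
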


For the proof, we recall the following result from \cite[Lemma~2.2]{Mikelic91}.
\begin{prop}\label{prop:restOp}
Let $1 < q < \infty$. There exists an operator
\begin{align*}
    R_\eps : W_0^{1,q}(\Omega)\to W_0^{1,q}(\Omega_\eps)
\end{align*}
such that:
\begin{itemize}
    \item For any $\vu\in W_0^{1,q}(\Omega_\eps)$, we have $R_\eps \tilde{\vu}=\vu$.
    \item $\dv \vu = 0 \Rightarrow \dv R_\eps \vu = 0$.
    \item There exists a constant $C>0$ which is independent of $\eps$ such that
    \begin{align}\label{est.R_eps}
        \eps^{-1} \|R_\eps \vu\|_{L^q(\Oe)} + \|\nabla R_\eps \vu\|_{L^q(\Oe)} \leq C \big( \eps^{-1} \|\vu\|_{L^q(\Omega)} + \|\nabla \vu\|_{L^q(\Omega)} \big).
    \end{align}
\end{itemize}
\end{prop}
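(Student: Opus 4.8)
\textbf{Plan of proof for Lemma \ref{lem:pressure.decomp}.}

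The strategy is the classical pressure-reconstruction trick of Lipton--Avellaneda and Masmoudi, carried out with the Mikeli\'c restriction operator $R_\eps$ from Proposition \ref{prop:restOp} (which we take as given). For a fixed test function $\varphi \in W^{1,q'}_0(\Omega)$ we would like to test the momentum equation \eqref{eq:Momentum} against $R_\eps \varphi \in W^{1,q'}_0(\Omega_\eps)$, but this is not directly a divergence-free field and it is not compactly supported in time. The first step is therefore to work at the level of a functional: for $q = \frac{p\gamma}{(p-2)\gamma - p}$ (so that $q' = \frac{p\gamma}{p+2\gamma}$, matching the exponents in the statement) define a linear functional $\langle \mathcal{P}_\eps(t), \varphi\rangle$ on $W^{1,q'}_0(\Omega)$ essentially by $\int_{\Oe} p(\rho_\eps) \,\dv(R_\eps\varphi) \dd x$ plus the correction terms forced on us by $R_\eps\varphi$ not being solenoidal. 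Using \eqref{eq:Momentum} in its time-integrated form, $\int_0^s\langle \mathcal P_\eps,\varphi\rangle$ equals a sum of terms: the viscous term $\eps^2 \int \S(\nabla\vu_\eps):\nabla R_\eps\varphi$, the inertial terms $\eps^\lambda \int (\rho_\eps\vu_\eps\cdot\partial_t + \rho_\eps\vu_\eps\otimes\vu_\eps:\nabla)(R_\eps\varphi)$, the force term $\int \rho_\eps \vc f\cdot R_\eps\varphi$, and the endpoint contributions $\eps^\lambda \int (\rho_\eps\vu_\eps)(s,\cdot) R_\eps\varphi(s,\cdot)$ etc. Each of these is estimated by H\"older in space-time together with the restriction bound \eqref{est.R_eps}, the a priori bounds \eqref{bds}, and the Poincar\'e inequality \eqref{Poinc}; crucially \eqref{est.R_eps} loses at most a factor $\eps^{-1}$ when passing from $\varphi$ to $R_\eps\varphi$, and this $\eps^{-1}$ is exactly absorbed into the displayed powers of $\eps$. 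Interpolation $L^6 \hookleftarrow L^2 \cap L^6$ controls the $L^{2\gamma/(\gamma+1)}$-type norms of $\rho_\eps\vu_\eps$ and $\rho_\eps|\vu_\eps|^2$ in terms of \eqref{bds}, producing the exponents $\frac{6\gamma}{5\gamma-6}$ and $\frac{2\gamma}{\gamma-1}$ and the Sobolev-dual spaces $[W^{1,2}(0,s;\cdot)]'$, $[W^{1,\infty}(0,s;\cdot)]'$; the factor $\eps^{-3(p-2)/p}$ comes from the cubic convective term after a Gagliardo--Nirenberg/interpolation estimate of $\|\vu_\eps\|_{L^p}$ in terms of $\|\vu_\eps\|_{L^2}$ and $\eps^{-1}\|\vu_\eps\|_{L^2}\gtrsim\|\nabla\vu_\eps\|$-type bounds. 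Collecting, $\mathcal P_\eps$ lies (uniformly in $\eps$) in the sum space displayed in the statement.

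The second step is to convert this functional into an honest function $P_\eps:\Omega\to[0,\infty)$ with $P_\eps=p(\rho_\eps)$ on $\Oe$. For this we use the Bogovskii/de Rham correspondence: since $\mathcal P_\eps$ annihilates all divergence-free $\varphi$ (because for such $\varphi$, $R_\eps\varphi$ is again divergence-free by Proposition \ref{prop:restOp}, and then $\dv R_\eps\varphi = 0$ kills the pressure term while the remaining terms reassemble into $\int\nabla P_\eps\cdot\varphi$ with $P_\eps$ the sought extension), there is a scalar potential. Concretely one sets $\langle \nabla P_\eps,\varphi\rangle := -\langle \mathcal P_\eps,\varphi\rangle$ for $\varphi\in W^{1,q'}_0$, checks it is a gradient, and recovers $P_\eps$ up to a constant fixed by requiring $\int_\Omega P_\eps = \int_{\Oe} p(\rho_\eps) + (\text{controlled remainder})$; the fact that $R_\eps\tilde\vu = \vu$ guarantees $P_\eps$ genuinely restricts to $p(\rho_\eps)$ on $\Oe$. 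Nonnegativity and the $L^\infty_t L^1_x$ bound follow because $p(\rho_\eps)\ge 0$ and $\|p(\rho_\eps)\|_{L^1(\Oe)}\lesssim \|\rho_\eps\|_{L^\gamma(\Oe)}^\gamma\lesssim 1$ by \eqref{bds} and \eqref{p}, while the harmonic (or Bogovskii-inverted) extension into the holes is $L^1$-bounded with constant independent of $\eps$ by a scaling argument on the reference cell. The regularity splitting of $P_\eps$ is read off from the splitting of $\mathcal P_\eps$ established in step one.

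The final step is the "in particular" duality bound. Given $\varphi\in W^{1,\infty}(0,s;L^\infty(\Omega))$ one writes $\int_0^s\int_{\Oe}p(\rho_\eps)\varphi = \int_0^s\int_\Omega P_\eps\varphi$ and pairs $P_\eps$, term by term in its decomposition, against $\varphi$: the $L^2(0,s;W^{1,2})$ and $\eps L^2((0,s)\times\Omega)$ pieces pair with $\varphi$ via the $\eps\|\varphi\|_{L^q(\Omega)}$ half of the norm $\|\cdot\|_{L^q_\eps}$ in \eqref{i2} (here one uses that $\varphi$ itself is \emph{not} assumed to vanish on $\partial\Oe$, so only the $\eps\|\varphi\|_{L^q}$ and $\|\varphi\|_{[W^{1,q'}]'}$ combination is available, which is precisely \eqref{i2}); the dual-space pieces $[W^{1,r}(0,s;X)]'$ pair with $\varphi$ via $\|\varphi\|_{W^{1,r}(0,s;X')}$, and since $r\in\{2,\infty\}$ and $\|\cdot\|_{W^{1,2}}\le\|\cdot\|_{W^{1,\infty}}$ on a bounded time interval, all time-regularity is uniformly dominated by $W^{1,\infty}(0,s;\cdot)$, yielding the four displayed terms with their stated powers of $\eps$. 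The main obstacle throughout is bookkeeping: tracking exactly how each power of $\eps$ (from $R_\eps$, from Poincar\'e, from the interpolation of the nonlinear terms, from the prefactors $\eps^2$ and $\eps^\lambda$ in \eqref{NSE}) combines, so that the exponents $\lambda-1-\frac{3(p-2)}{p}$, $\lambda-1$, $\frac\lambda2$ come out correctly; the de Rham step and the uniform $L^1$-extension into the holes are standard once the functional estimate is in place.
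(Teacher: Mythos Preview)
You have addressed the wrong statement. The proposition you were asked to prove is Proposition~\ref{prop:restOp}, the existence of the Mikeli\'c restriction operator $R_\eps$ with the three listed properties. Your proposal instead sketches a proof of Lemma~\ref{lem:pressure.decomp}, the pressure decomposition, which \emph{uses} Proposition~\ref{prop:restOp} as an input (you even write ``the Mikeli\'c restriction operator $R_\eps$ from Proposition~\ref{prop:restOp} (which we take as given)''). So as a proof of the stated proposition, your proposal contains nothing: it assumes the result.

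For the record, the paper does not prove Proposition~\ref{prop:restOp} either; it is quoted verbatim from \cite[Lemma~2.2]{Mikelic91}. A genuine proof would construct $R_\eps$ cell-by-cell via a local Stokes problem on $Q_i^\eps \setminus \obst_i^\eps$ (Tartar's original construction for $q=2$, extended to $1<q<\infty$ by Mikeli\'c), then patch these local operators together and obtain \eqref{est.R_eps} by scaling from the reference cell. None of this appears in your write-up. Incidentally, the outline you did give for Lemma~\ref{lem:pressure.decomp} is broadly in line with the paper's actual proof of that lemma, but that is not what was asked.
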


\begin{proof}[Proof of Lemma \ref{lem:pressure.decomp}]
With the help of the operator $R_\eps$, we define the extension $P_\eps=P(p(\rho_\eps))$ of the pressure $p(\rho_\eps)$ to the whole of $\Omega$ by duality as
\begin{align*}
    \int_0^s \langle \nabla P_\eps, \phi\rangle_{W^{-1,2}(\Omega); W_0^{1,2}(\Omega)} \dd t = \int_0^s \langle \nabla p(\rho_\eps), R_\eps \phi\rangle_{W^{-1,2}(\Omega_\eps); W_0^{1,2}(\Omega_\eps)} \dd t \quad \text{for any } \phi\in C_c^\infty([0,s]\times \Omega).
\end{align*}
The fact that the operator $\nabla P_\eps$ is indeed a gradient follows from the second property of the operator $R_\eps$.
By the second equation of \eqref{NSE}, we have
\begin{align*}
    \int_0^s \langle \nabla p(\rho_\eps), R_\eps \phi\rangle_{W^{-1,2}(\Omega_\eps); W_0^{1,2}(\Omega_\eps)} \dd t &= \int_0^s \int_\Oe \eps^\lambda \rho_\eps \vu_\eps \cdot \partial_t R_\eps \phi \dd x \dd t + \int_0^s \int_\Oe \eps^\lambda \rho_\eps \vu_\eps\otimes\vu_\eps : \nabla R_\eps \phi \dd x \dd t\\
    &\quad - \int_0^s\int_\Oe \eps^2 \S(\nabla\vu_\eps) : \nabla R_\eps \phi \dd x \dd t + \int_0^s \int_\Oe \rho_\eps \vc{f} \cdot R_\eps\phi \dd x \dd t\\
    &\quad + \int_\Oe \eps^\lambda \vc m_{\eps0} R_\eps \phi(0,\cdot) -  \eps^\lambda\rho_\eps(s,\cdot) \vu_\eps(s,\cdot) R_\eps \phi(s,\cdot) \dd x\\
    &=\sum_{i=1}^5 I_i.
\end{align*}

We will estimate each term separately. We get by  the Sobolev embedding $W_0^{1,2} \hookrightarrow L^6$ and the uniform bounds \eqref{bds} and  \eqref{est.R_eps}
\begin{align*}
    |I_1| &\ls  \eps^\lambda \|\rho_\eps\|_{L^\infty(0,s;L^\gamma(\Oe))} \|\vu_\eps\|_{L^2(0,s;L^{6}(\Oe))} \|\partial_t R_\eps\phi\|_{L^2(0,s; L^{\frac{6\gamma}{5 \gamma -6}}(\Oe))}\\
    &\ls \eps^{\lambda -1} \Big[ \|\partial_t \phi\|_{L^2(0,s; L^{\frac{6\gamma}{5 \gamma -6}}(\Omega))} + \eps \|\partial_t \nabla \phi\|_{L^2(0,s; L^{\frac{6\gamma}{5 \gamma -6}}(\Omega))} \Big]\\
    &\ls \eps^{\lambda -1} \Big[ \|\phi\|_{W^{1,2}(0,s;  L^{\frac{6\gamma}{5 \gamma -6}}(\Omega))} + \eps \|\nabla \phi\|_{W^{1,2}(0,s;  L^{\frac{6\gamma}{5 \gamma -6}}(\Omega))} \Big].
\end{align*}
For $I_2$, we use that by H\"older's inequality, Sobolev embedding,   the Poincar\'e inequality \eqref{Poinc}, and \eqref{bds} for any $p \in [2,6]$
\begin{align} \label{interpolation}
    \|\vu_\eps\|_{L^p(\Omega_\eps)} \lesssim   \|\vu_\eps\|_{L^2(\Oe)}^{\frac{6 -p}{2p}} \|\vu_\eps\|_{L^{6}(\Oe)}^{\frac{3(p-2)}{2p}}  \lesssim \eps^{\frac{6 -p}{2p}} \|\nabla \vu_\eps\|_{L^2(\Oe)} \lesssim \eps^{\frac{6 -3p}{2p}} .
\end{align}
Hence, for any $p \in [2,6]$ such that $p \geq \frac {2\gamma}{\gamma-1}$,
\begin{align*}
    |I_2| &\ls \eps^\lambda \|\rho_\eps\|_{L^\infty(0,s;L^\gamma(\Oe))} \| \vu_\eps\|_{L^2(0,s; L^p(\Oe))}^2 \|\nabla R_\eps\phi\|_{L^\infty(0,s;L^\frac{p \gamma}{(p-2)\gamma-p}(\Oe))}\\
    &\ls \eps^{\lambda -\frac{3(p-2)}{p}} \Big[ \|\nabla\phi\|_{L^\infty(0,s;L^\frac{p \gamma}{(p-2)\gamma-p}(\Omega))} + \eps^{-1}\|\phi\|_{L^\infty(0,s;L^\frac{p \gamma}{(p-2)\gamma-p}(\Omega))} \Big],\\
    |I_3| &\ls \eps^2 \|\S(\nabla\vu_\eps)\|_{L^2((0,s)\times \Oe)} \|\nabla R_\eps\phi\|_{L^2((0,s)\times \Oe)}\\
    &\ls \|\phi\|_{L^2((0,s)\times \Omega)} + \eps \|\nabla\phi\|_{L^2((0,T)\times\Omega)},\\
    |I_4| &\ls \|\rho_\eps\|_{L^2((0,T)\times \Oe)} \|R_\eps\phi\|_{L^2((0,s)\times \Oe)} \ls \|\phi\|_{L^2((0,s)\times \Omega)} + \eps \|\nabla\phi\|_{L^2((0,s)\times \Omega)}.
\end{align*}

For $I_5$, we use that $W^{1,\infty}(0,s) \subset C^0(0,s)$, uniformly for all $s > 0$ and $W_0^{1,2}(\Omega)\subset L^\frac{2\gamma}{\gamma-1}(\Omega)$ for any $\gamma>\frac32$ to estimate
using again \eqref{bds}
\begin{align*}
    |I_5| &\ls \eps^\lambda \|\rho_\eps \vu_\eps\|_{L^\infty(0,s; L^\frac{2\gamma}{\gamma+1}(\Oe))} \|R_\eps \phi\|_{L^\infty(0,s;L^\frac{2\gamma}{\gamma-1}(\Oe))} \\
    &\ls \eps^\lambda \|\rho_{\eps} |\vu_{\eps}|^2\|_{L^\infty(0,s;L^1(\Oe))}^\frac12 \|\rho_{\eps}\|_{L^\infty(0,s;L^\gamma(\Oe))}^\frac12 \|R_\eps \phi\|_{L^\infty(0,s;L^\frac{2\gamma}{\gamma-1}(\Oe))} \\
    &\ls
    \eps^\frac{\lambda}{2}  \Big[ \|\phi\|_{W^{1,\infty}(0,s; L^\frac{2\gamma}{\gamma-1}(\Omega))} + \eps \|\nabla \phi\|_{W^{1,\infty}(0,s; L^\frac{2\gamma}{\gamma-1}(\Omega))} \Big] .
\end{align*}
Combining these estimates on $I_k$ for  $1 \leq k \leq 5$ yields
\begin{align*}
    \nabla P_\eps \in & \,L^2((0,s)\times \Omega) + \eps L^2(0,s;W^{-1,2}(\Omega))  + \eps^{\lambda - \frac{3(p-2)}{p}} L^1(0,s;W^{-1,\frac{p \gamma}{p + 2\gamma}}(\Omega)) + \eps^{\lambda -1 - \frac{3(p-2)}{p}} L^1(0,s;L^{\frac{p \gamma}{p + 2\gamma}}(\Omega))   \\
     & + \eps^{\lambda} [W^{1,2}(0,s; W^{1,\frac{6\gamma}{5\gamma -6}}(\Omega))]' + \eps^{\lambda-1} [W^{1,\infty}(0,s; L^{\frac{6\gamma}{\gamma +6}}(\Omega))]'
    \\
     & + \eps^{\frac{\lambda}{2}+1} [W^{1,\infty}(0,s; W^{1,\frac{2\gamma}{\gamma-1}}(\Omega))]' + \eps^\frac{\lambda}{2} [W^{1,\infty}(0,s; L^\frac{2\gamma}{\gamma-1}(\Omega))]'.
\end{align*}

Note that regarding the estimate of $I_2$, we can test with $\phi \in L^\infty(0,s;W^{1,\tfrac{p \gamma}{(p-2)\gamma-p}}(\Omega))$ such that the corresponding part of $\nabla P_\eps$ is indeed an element of $L^1(0,s;W^{-1,\frac{p \gamma}{p + 2\gamma}}(\Omega))$.

The above decomposition for $\nabla P_\eps$ now implies
\begin{align*}
    P_\eps \in & \,L^2(0,s;W^{1,2}( \Omega)) + \eps L^2((0,s)\times \Omega) + \eps^{\lambda - \frac{3(p-2)}{p}} L^1(0,s;L^{\frac{p \gamma}{p + 2\gamma}}(\Omega)) + \eps^{\lambda -1 - \frac{3(p-2)}{p}} L^1(0,s;W^{1,\frac{p \gamma}{p + 2\gamma}}(\Omega))   \\
      & + \eps^{\lambda} [W^{1,2}(0,s; L^{\frac{6\gamma}{5\gamma -6}}(\Omega))]' + \eps^{\lambda-1} [W^{1,2}(0,s; W^{-1,\frac{6\gamma}{5\gamma - 6}}(\Omega))]'
     \\
      & + \eps^{\frac{\lambda}{2}+1} [W^{1,\infty}(0,s; L^{\frac{2\gamma}{\gamma-1}}(\Omega))]' + \eps^\frac{\lambda}{2} [W^{1,\infty}(0,s; W^{-1,\frac{2\gamma}{\gamma-1}}(\Omega))]',
\end{align*}
which is the desired decomposition for $P_\eps$. 

We note moreover that due to \cite[Theorem 2.6]{Allaire89} (see also \cite{LiptonAvellaneda1990}), we have the explicit representation
\begin{align*}
    P_\eps = \begin{cases}
    p(\rho_\eps) & \mbox{in } \Omega_\eps,\\
    \fint_{Q_i^\eps \setminus \obst_i^\eps} p(\rho_\eps) \dd x &\mbox{in } \obst_i^\eps,
    \end{cases}
\end{align*}
from which we also infer that $P_\eps$ is uniformly bounded in $L^\infty(0,T;L^1(\Omega))$. In particular, the ``distributional'' parts of $P_\eps$ are indeed functions rather than mere distributions.
\end{proof}

\section{Proof of the main result}\label{sec:proof}
This section is devoted to the proof of Theorem \ref{thm1} in the case that $\Omega$ is the torus. Consequently, we will assume $\Omega = \mathbb T^3$ throughout this section. We start with some preparation before giving the explicit error estimates.\\

\subsection{Preliminary estimates on the test functions and the pressure potential}
\label{sec:correctors}
As outlined in Section \ref{sec:formal}, we  use the relative energy inequality with  $(r_\eps,\vv_\eps)$ as in \eqref{r.w.formal}. 
To simplify the notation, we recall the definition of $W = (\vv_1,\vv_2,\vv_3)$ and $\vc q = (q_1,q_2,q_3)^T$ from \eqref{sol.Stokes} and define $(W^\eps, \vc{q}^\eps) \in H^1(\Omega) \times L^2(\Oe)/\mathbb{R}$ via
\begin{align*}
    W^\eps(x) &= W(x/\eps)\mathcal{K}^{-1} = \big[ (\vv_1, \vv_2, \vv_3)(x/\eps) \big] \mathcal{K}^{-1}, \\
    \vc{q}^\eps(x) &= \eps^{-1} \mathcal K^{-1} \vc{q}(x/\eps) =  \mathcal{K}^{-1}  \big[ \eps^{-1} (q_1, q_2, q_3)(x/\eps) \big]^T,
\end{align*}
where we recall that $\mathcal K$ defined in \eqref{def:K} is a symmetric positive definite matrix.
 Note that by this definition and our convention for the divergence and gradient (see Section~\ref{sec:structure}), we have
\begin{align}\label{definition.K}
\begin{cases}
     \eps^2(-\Delta W^\eps + \nabla \vc{q}^\eps) = \mathcal{K}^{-1} & \text{in } \Oe,\\
     \dv W^\eps = 0 & \text{in } \Oe,\\
     W^\eps = 0 & \text{in } \Omega\setminus \Oe.
\end{cases}
\end{align}
Moreover, \eqref{def:K} implies
\begin{align} \label{average.W}
        \fint_{Q_i^\eps \setminus \obst_i^\eps} W^\eps \dd x = \frac 1 \theta \rm Id,
\end{align}
where we recall $\theta = 1 - |\obst| / |Q|$ from \eqref{theta}.
Since $\vc q_\eps$ is just defined up to an additive constant, without loss of generality we fix this constant such that $\int_\Oe \vc q_\eps \dd x = 0$.

Using that $\mathcal K$ is symmetric, \eqref{r.w.formal} can now be rewritten as
\begin{align}\label{cor}
\begin{split}
    r_\eps &=  p^{-1}(p(\rho) + \eps^2 \vc q^\eps \cdot \vu), \\
    \vv_\eps &= W^\eps \vu.
\end{split}
\end{align}

\begin{lem} \label{lem:r.v}
Under the assumptions of Theorem \ref{thm1}, there exists $\eps_0 > 0$ 
such that for  all $\eps < \eps_0$, the functions $r_\eps, \vv_\eps \in W^{1,\infty}((0,T) \times \Omega)$ are well-defined and satisfy
\begin{align}
    1 \lesssim r_\eps &\lesssim 1,  \label{r_eps}\\
    \|\vv_\eps\|_{W^{1,\infty}(0,T; L^\infty(\Omega))} +  \|\dv \vv_\eps\|_{L^\infty((0,T) \times \Omega)} + \| r_\eps\|_{W^{1,\infty}((0,T) \times \Omega)}  &\lesssim 1, \label{v_eps} \\
    \|\nabla \vv_\eps\|_{L^\infty((0,T) \times \Omega)} &\lesssim \eps^{-1}, \label{nabla.v_eps} \\
  \|p(r_\eps) - p(\rho)\|_{L^\infty((0,T) \times \Omega)} +   \|r_\eps - \rho\|_{L^\infty((0,T) \times \Omega)} + \|\partial_t (r_\eps - \rho)\|_{L^\infty((0,T) \times \Omega)}  &\lesssim \eps. \label{rho_cor}
\end{align}

Furthermore,
\begin{align} \label{W^eps.H^-1}
    \bigg\|\bigg( \frac 1 \theta \Id - W^\eps \bigg) \1_\Oe \bigg\|_{[W^{1,1}(\Omega)]'} \ls \eps.
\end{align}

Finally, we also have
\begin{align} \label{W^eps.H^-1.Omega}
    \|\Id - W^\eps \|_{[W^{1,1}(\Omega)]'} \ls \eps.
\end{align}
\end{lem}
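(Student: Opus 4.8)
The plan is to read everything off the explicit representation \eqref{cor}, namely $\vv_\eps = W^\eps \vu = W(\cdot/\eps)\mathcal K^{-1}\vu$ and $r_\eps = p^{-1}\bigl(p(\rho) + \eps^2\vc q^\eps\cdot\vu\bigr)$, where $\eps^2\vc q^\eps(x) = \eps\,\mathcal K^{-1}\vc q(x/\eps)$ is of size $O(\eps)$. Two structural facts enter: (a) the cell solutions $(W,\vc q)$ of the Stokes problem \eqref{sol.Stokes} are smooth up to the boundary (standard elliptic regularity, since $\partial\obst$ is smooth), so that their $Q$-periodic (and, for $W$, zero-) extensions satisfy $W,\nabla W,\vc q,\nabla\vc q\in L^\infty(Q)$ with $\dv_y\vv_i=0$; (b) the limit data satisfy $\rho,\vu\in W^{1,\infty}((0,T)\times\Omega)$, $\vu\in W^{1,\infty}(0,T;W^{1,\infty}(\Omega))$ and $\rho\ge\underline{\rho}>0$. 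Throughout, $\Omega=\mathbb T^3$.

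First I would prove the bounds on $\vv_\eps$. Writing $\vv_\eps(t,x)=\sum_i(\mathcal K^{-1}\vu(t,x))_i\,\vv_i(x/\eps)$, the quantities $\|\vv_\eps\|_{L^\infty}$ and $\|\partial_t\vv_\eps\|_{L^\infty}$ are controlled by $\|W\|_{L^\infty},\|\mathcal K^{-1}\|,\|\vu\|_{L^\infty},\|\partial_t\vu\|_{L^\infty}\lesssim1$; an $x$-derivative produces a term $\eps^{-1}(\nabla_y\vv_i)(x/\eps)$, which yields \eqref{nabla.v_eps}, but in $\dv\vv_\eps$ that dangerous term is $\eps^{-1}\sum_i(\mathcal K^{-1}\vu)_i(\dv_y\vv_i)(x/\eps)=0$, leaving $\dv\vv_\eps=\sum_i\nabla(\mathcal K^{-1}\vu)_i\cdot\vv_i(x/\eps)$, hence $\lesssim\|\nabla\vu\|_{L^\infty}\|W\|_{L^\infty}\lesssim1$. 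For $r_\eps$: since $\|\eps^2\vc q^\eps\cdot\vu\|_{L^\infty}\lesssim\eps$ and $p(\rho)\ge p(\underline{\rho})>0$, there is $\eps_0>0$ so that for $\eps<\eps_0$ the argument $p(\rho)+\eps^2\vc q^\eps\cdot\vu$ stays in a fixed compact subinterval of $(0,\infty)$ on which $p^{-1}$ is $C^1$ (as $p\in C^2$, $p'>0$ by \eqref{p}); hence $r_\eps$ is well defined, \eqref{r_eps} holds, and $p(r_\eps)-p(\rho)=\eps^2\vc q^\eps\cdot\vu$, so $\|p(r_\eps)-p(\rho)\|_{L^\infty}\lesssim\eps$ and, by Lipschitz continuity of $p^{-1}$, $\|r_\eps-\rho\|_{L^\infty}\lesssim\eps$. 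Differentiating $p(r_\eps)=p(\rho)+\eps^2\vc q^\eps\cdot\vu$ in $t$ ($\vc q^\eps$ being $t$-independent) and solving for $\partial_t r_\eps$ gives
\[
\partial_t(r_\eps-\rho)=\frac{p'(\rho)-p'(r_\eps)}{p'(r_\eps)}\,\partial_t\rho+\frac{\eps^2\vc q^\eps\cdot\partial_t\vu}{p'(r_\eps)};
\]
since $|p'(\rho)-p'(r_\eps)|\lesssim|\rho-r_\eps|\lesssim\eps$, $p'(r_\eps)\gtrsim1$ and $\|\eps^2\vc q^\eps\cdot\partial_t\vu\|_{L^\infty}\lesssim\eps$, this is $\lesssim\eps$, finishing \eqref{rho_cor}. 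The same identity bounds $\|\partial_t r_\eps\|_{L^\infty}$, while $\nabla r_\eps=\tfrac1{p'(r_\eps)}\bigl(\nabla p(\rho)+\nabla(\eps^2\vc q^\eps\cdot\vu)\bigr)$ is bounded because $\nabla(\eps^2\vc q^\eps)=\mathcal K^{-1}(\nabla_y\vc q)(\cdot/\eps)=O(1)$; hence $\|r_\eps\|_{W^{1,\infty}}\lesssim1$, which completes \eqref{v_eps}.

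The main point is \eqref{W^eps.H^-1} and \eqref{W^eps.H^-1.Omega}. Write $\widetilde W:=W\mathcal K^{-1}$, zero-extended to $Q$, so that $W^\eps(x)=\widetilde W(x/\eps)$ for \emph{all} $x$ (on the holes both sides vanish) and $\1_{Q\setminus\obst}(x/\eps)=\1_\Oe(x)$. Then both left-hand sides are of the form $F_\eps(x)=F(x/\eps)$ with $F\in L^\infty(Q;\R^{3\times3})$ being $Q$-periodic: $F=\Id-\widetilde W$ for \eqref{W^eps.H^-1.Omega}, and $F=\tfrac1\theta\Id\,\1_{Q\setminus\obst}-\widetilde W$ for \eqref{W^eps.H^-1}. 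The decisive observation is that $\int_Q F\dd y=0$: by \eqref{average.W} (equivalently \eqref{def:K}) one has $\int_Q\widetilde W=|Q|\,\Id$, while $\int_Q\Id=|Q|\,\Id$ and $\tfrac1\theta|Q\setminus\obst|=|Q|$ by \eqref{theta}. Hence one may solve, on the flat torus $\R^3/(2\Z)^3$, the equation $-\Delta\psi=F$ componentwise with $\psi$ of zero mean; since $F\in L^\infty\subset L^p$ for all $p<\infty$, Calderón–Zygmund estimates give $\psi\in W^{2,p}$, and for $p>3$ the field $\chi:=-\nabla\psi\in W^{1,p}\hookrightarrow L^\infty(Q)$ satisfies $\dv_y\chi=F$ and $\|\chi\|_{L^\infty}\lesssim\|F\|_{L^\infty}\lesssim1$. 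Setting $G_\eps(x):=\eps\,\chi(x/\eps)$ we obtain $\dv_x G_\eps=F_\eps$ with $\|G_\eps\|_{L^\infty}\lesssim\eps$, so that for every $\varphi\in W^{1,1}(\Omega)$, integration by parts on the torus yields
\[
\Bigl|\int_\Omega F_\eps:\varphi\dd x\Bigr|=\Bigl|\int_\Omega G_\eps:\nabla\varphi\dd x\Bigr|\le\|G_\eps\|_{L^\infty}\|\nabla\varphi\|_{L^1(\Omega)}\lesssim\eps\,\|\varphi\|_{W^{1,1}(\Omega)},
\]
which is precisely \eqref{W^eps.H^-1.Omega} and \eqref{W^eps.H^-1}.

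I expect the genuinely non-routine ingredient to be this last step, and within it the verification of the vanishing cell average: it is exactly the zero-mean property of $F$ that turns the merely $O(1)$ oscillating field $F(\cdot/\eps)$ into a divergence $\dv_x(\eps\,\chi(\cdot/\eps))$, hence into an $O(\eps)$ element of $[W^{1,1}(\Omega)]'$. The remaining care needed is bookkeeping with the matrix/vector conventions in $\dv\vv_\eps$ (so that the $\eps^{-1}$ term actually cancels via $\dv_y\vv_i=0$) and the smooth-up-to-the-boundary regularity of the Stokes cell problem.
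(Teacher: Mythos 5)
Your proposal is correct. For \eqref{r_eps}--\eqref{rho_cor} you follow essentially the same route as the paper: $W^{1,\infty}$ regularity of the cell solutions $(\vv_i,q_i)$ gives $\|W^\eps\|_{L^\infty}+\eps\|\nabla W^\eps\|_{L^\infty}+\eps\|\vc q^\eps\|_{L^\infty}+\eps^2\|\nabla \vc q^\eps\|_{L^\infty}\ls 1$, and everything else is read off the explicit formulas \eqref{cor}; your computation of $\partial_t(r_\eps-\rho)$ is a more detailed version of what the paper leaves implicit. Where you genuinely diverge is in \eqref{W^eps.H^-1}--\eqref{W^eps.H^-1.Omega}. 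The paper exploits the vanishing cell averages \eqref{average.W} and \eqref{def:K} locally: it subtracts from the test function $\Psi$ its mean over each cell $Q_i^\eps\setminus\obst_i^\eps$ (resp.\ $Q_i^\eps$) and applies the $\eps$-scaled Poincar\'e--Wirtinger inequality cell by cell, which needs only the $L^\infty$ bound on $W^\eps$. You instead use the same zero-mean property globally to build a flux corrector: solving $-\Delta\psi=F$ on the cell torus, setting $\chi=-\nabla\psi\in L^\infty$ via Calder\'on--Zygmund and Sobolev embedding, and writing $F(\cdot/\eps)=\dv\big(\eps\,\chi(\cdot/\eps)\big)$ before integrating by parts on $\mathbb T^3$. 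Both arguments hinge on the identical cancellation $\int_Q F\dd y=0$, which you verify correctly in both cases. Your version costs an extra layer of elliptic theory but is the standard quantitative-homogenization device and would extend immediately to duality against $\|\nabla\varphi\|_{L^p}$ for other $p$; the paper's version is more elementary and, being purely local, is the one that survives essentially unchanged in Section \ref{sec:bdDom}, where the boundary cells that are not entirely contained in $\Omega$ must be handled separately (your global-periodicity argument would need the same kind of boundary-layer correction there). Since the lemma is invoked in Section \ref{sec:proof} under the standing assumption $\Omega=\mathbb T^3$, your restriction to the torus is legitimate.
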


\begin{proof}
First, standard regularity theory for Stokes equations implies
\begin{align*}
    \|\vv_i\|_{W^{1,\infty}(Q)} + \|q_i\|_{W^{1,\infty}(Q\setminus \obst)} \lesssim 1.
\end{align*}
Hence, since $\mathcal{K}$ is constant,
\begin{align} \label{W.q.infty}
    \|W^\eps\|_{L^\infty(\Omega)} + \eps \|\nabla W^\eps\|_{L^\infty(\Omega)} +  \eps \|\vc{q}^\eps\|_{L^\infty(\Oe)} + \eps^2  \|\nabla \vc{q}^\eps\|_{L^\infty(\Oe)}\lesssim  1.
\end{align}
In particular 
$ \|\eps^2 \vc q^\eps \cdot \vu\|_{L^\infty((0,T) \times \Omega)} \lesssim \eps$.
This implies that $r_\eps$ is well-defined for $\eps > 0$ sufficiently small and satisfies \eqref{r_eps}. Indeed, the function $p$ is invertible on $(0,\infty)$ by \eqref{p}. 
Moreover, the assumption $\rho \geq \underline{\rho} >0$
in the statement of Theorem \ref{thm1} together with the estimate \eqref{W.q.infty} guarantees that the argument of $p^{-1}$ in \eqref{cor} is bounded above by a positive constant for $\eps \ll 1$.
Moreover, $\vv_\eps$ is well defined and the estimates \eqref{v_eps}--\eqref{nabla.v_eps} hold due to \eqref{W.q.infty} and using also that $\dv W^\eps = 0$. Finally, using \eqref{r_eps}, the regularity of $p$, and $ \|\eps^2 \vc q^\eps \cdot \vu\|_{L^\infty((0,T) \times \Omega)} \lesssim \eps$, estimate \eqref{rho_cor} follows.

\medskip

To show \eqref{W^eps.H^-1},  we observe that by \eqref{average.W}, we have for all cells $Q_i^\eps \subset \Omega$ 
\begin{align*}
    \int_{Q_i^\eps \setminus \obst_i^\eps} \frac 1 \theta \Id - W^\eps\dd x = 0.
\end{align*}
Hence, for $\Psi \in W^{1,1}(\Omega)$, we have $\Psi \cdot \1_\Oe \in W^{1,1}(\Oe)$; thus, using as well $\Omega_\eps = \bigcup_{i=1}^{N(\eps)} Q_i^\eps \setminus \obst_i^\eps$ if $\Omega = \mathbb T^3$,
\begin{align*}
    \int_{\bigcup_{i=1}^{N(\eps)} Q_i^\eps \setminus \obst_i^\eps} \bigg(\frac 1 \theta \Id - W^\eps \bigg) : \Psi \dd x &= \sum_{i=1}^{N(\eps)} \int_{Q_i^\eps \setminus \obst_i^\eps} \bigg(\frac 1 \theta \Id - W^\eps \bigg) : \bigg(\Psi - \fint_{Q_i^\eps \setminus \obst_i^\eps} \Psi \dd y \bigg) \dd x \\
    &\lesssim  \eps \sum_{i=1}^{N(\eps)} \|\nabla \Psi\|_{L^1(Q_i^\eps \setminus \obst_i^\eps)} \leq \eps  \|\nabla \Psi\|_{L^1(\Omega)}.
\end{align*}

Finally, we observe that we can redo this computation with $\Oe$ replaced by $\Omega$ and dropping the factor $\theta^{-1}$ since 
\eqref{def:K} implies that for all cells $Q_i^\eps \subset \Omega$ 
\begin{align*}
    \int_{Q_i^\eps }  \Id - W^\eps\dd x = 0.
\end{align*}
This yields \eqref{W^eps.H^-1.Omega}.
\end{proof}

For the readers convenience, we recall some well known estimates (see, e.g., \cite[Lemma~5.1]{FeireislNovotny2009singlim}).

\begin{lem}  \label{lem:diff.H}
   Assume that $p$ satisfies \eqref{p} for some $\gamma \geq 2$. Let $I \Subset (0,\infty)$. Then, uniformly for all $r \in I$ and all $s \in [0,\infty)$,
\begin{align} \label{diff.H}
    H(s) - H'(r)(s-r) - H(r) &\gtrsim (r-s)^2 + p(s) \1_{\{s \geq 2r\}} \gtrsim |p(s) - p'(r)(s - r) - p(r) |.
\end{align}
\end{lem}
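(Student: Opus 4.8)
The plan is to treat $H$ as the Bregman potential of $p$. Differentiating $sH'(s)-H(s)=p(s)$ gives $H''(s)=p'(s)/s>0$, so $H$ is strictly convex on $(0,\infty)$, and combining $H(s)=sH'(s)-p(s)$ with $rH'(r)-H(r)=p(r)$ yields the identity
\[
 H(s)-H'(r)(s-r)-H(r) = s\bigl(H'(s)-H'(r)\bigr)-p(s)+p(r),
\]
which is convenient for the large-$s$ asymptotics. Write $\Phi_r(s)$ for the \lhs{} and $\Psi_r(s):=p(s)-p'(r)(s-r)-p(r)$; then $\Phi_r(r)=\Phi_r'(r)=0$, $\Phi_r\ge 0$, $\Phi_r$ is convex and decreasing on $(0,r]$, $\Phi_r''=p'/s$, and $\Psi_r''=p''$. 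It suffices to establish, with constants uniform for $r\in I$, the two bounds $\Phi_r(s)\gs (r-s)^2+p(s)\1_{\{s\ge 2r\}}$ and $(r-s)^2+p(s)\1_{\{s\ge 2r\}}\gs|\Psi_r(s)|$, and I would do this by splitting $[0,\infty)$ into $[0,r/2]$, $[r/2,2r)$, and $[2r,\infty)$.

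On the middle region, whose closure $\bigcup_{r\in I}[r/2,2r]$ is a compact subset of $(0,\infty)$, both $H''=p'/s$ and $|p''|$ are bounded, with $H''$ bounded below by a positive constant; the integral form of Taylor's remainder $\Phi_r(s)=\int_r^s(s-t)H''(t)\dd t$ (and the analogue for $\Psi_r$) then gives $\Phi_r(s)\asymp(r-s)^2$ and $|\Psi_r(s)|\ls(r-s)^2\ls\Phi_r(s)$, while the indicator vanishes. On $[0,r/2]$ one has $(r-s)^2\asymp 1$ uniformly in $r\in I$ and the indicator again vanishes, so boundedness of $p,p'$ on the relevant compacts gives $|\Psi_r(s)|\ls 1\asymp(r-s)^2$, and monotonicity of $\Phi_r$ on $(0,r]$ together with the middle-region bound gives $\Phi_r(s)\ge\Phi_r(r/2)\gs 1\asymp(r-s)^2$. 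On $[2r,\infty)$ I would split once more: for $2r\le s\le M$ the function $(r,s)\mapsto\Phi_r(s)$ is continuous and strictly positive on a compact set, hence bounded below, which dominates the bounded quantities $(r-s)^2+p(s)$ and $|\Psi_r(s)|$ there; and for $s\ge M$ with $M$ large, the asymptotics $p(s)\sim\frac{p_\infty}{\gamma}s^\gamma$, $H'(s)\sim\frac{p_\infty}{\gamma-1}s^{\gamma-1}$, $H(s)\sim\frac{p_\infty}{\gamma(\gamma-1)}s^\gamma$ forced by $p'(s)/s^{\gamma-1}\to p_\infty$, together with the identity above and the fact that the $r$-dependent terms are $O(s)$ uniformly for $r\in I$, give $\Phi_r(s)/p(s)\to\frac1{\gamma-1}>0$ and $\liminf_{s\to\infty}\Phi_r(s)/s^2>0$; since moreover $|\Psi_r(s)|\le p(s)+|p'(r)|(s-r)+p(r)\ls p(s)+s^2$, this yields $\Phi_r(s)\gs p(s)+(r-s)^2\gs|\Psi_r(s)|$ for $s\ge M$. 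Assembling the three regions and taking the worst of the finitely many $r$-uniform constants gives \eqref{diff.H}.

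I expect the two ends of the range to be the only delicate points. Near $s=0$, $p$ is only $C^2$ on the open half-line, so $H''$ and $p''$ may blow up; this is sidestepped because on $[0,r/2]$ the term $(r-s)^2$ is bounded below and crude bounds suffice, with no second derivative near $0$ ever used. As $s\to\infty$, term-by-term estimates on $H$ and $p$ derived only from the one-sided limit $p'(s)/s^{\gamma-1}\to p_\infty$ are too lossy to give the lower bound $\Phi_r\gs p$ when $\gamma$ is close to $2$, so one must use the sharp limit $\Phi_r(s)/p(s)\to 1/(\gamma-1)$; this is also where $\gamma\ge 2$ enters in an essential way, as it is precisely what makes the quadratic growth $(r-s)^2\sim s^2$ absorbable into $p(s)\sim s^\gamma$, so that the single \rhs{} $(r-s)^2+p(s)\1_{\{s\ge 2r\}}$ simultaneously captures both the quadratic part near the diagonal and the pressure growth far from it.
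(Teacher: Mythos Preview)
Your proof is correct and follows the same decomposition as the paper's (split into $s\le r/2$, $r/2\le s\le 2r$, and $s\ge 2r$ with a further large-$s$ cutoff), relying on convexity of $H$, Taylor remainders, and the growth of $p$ in the respective regions. One minor point on your closing commentary: for large $s$ you appeal to the sharp limit $\Phi_r/p\to 1/(\gamma-1)$ and assert that cruder estimates would be too lossy, but the paper's direct computation of $\int_{z_0}^s\frac{s-z}{z}p'(z)\,dz$ shows that the two-sided bounds $\tfrac12 p_\infty z^{\gamma-1}\le p'(z)\le 2p_\infty z^{\gamma-1}$ already give $\Phi_r(s)\gtrsim s^\gamma\gtrsim p(s)$ without any cancellation issue, so the sharp limit is convenient but not actually needed.
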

\begin{proof}
We only give the proof of the first inequality in \eqref{diff.H}. The second one follows by similar arguments, recalling that $p \in C^2((0,\infty))$.

We remark that by the definition of $H$ in \eqref{defPressPot}, we have
$H''(s) = p'(s)/s$ and hence
\begin{align*}
    H(s) - H'(r)(s-r) - H(r) = \int^r_s (z-s) H''(z) \dd z = \int_s^r \frac{z-s}z p'(z) \dd z.
\end{align*}
By the assumptions on $p$ in \eqref{p} 
and $\gamma \geq 2$ we have for $s \geq r/2$ that $p'(z)/z \gtrsim z^{\gamma-2} \gtrsim 1$ such that
\begin{align*}
    H(s)-H(r)-H'(r)(s-r) \gtrsim (s-r)^2.
\end{align*}

On the other hand, if $s \leq r/2$ we just use $r/2 \leq (r-s) \leq r$ to get 
\begin{align*}
    H(s) - H'(r)(s-r) - H(r) \geq \int_{r/2}^r \frac{z-s}z p'(z) \dd z \gtrsim \int_{r/2}^r (z-s) \dd z \geq r^2 \gtrsim (r -s)^2.
\end{align*}

It remains to show that for $s \geq 2r$ we have
\begin{align} \label{H.p}
     H(s) - H'(r)(s-r) - H(r) &\gtrsim  p(s).
\end{align}
By \eqref{p} there exists $z_0 \geq  r$  such that $\tfrac 1 2 p_\infty z^{\gamma-1} \leq p'(z) \leq  2 p_\infty z^{\gamma-1}$ for all $z \geq z_0$. Hence, for $s \geq 2 z_0$, we have
\begin{align*}
    H(s) - H'(r)(s-r) - H(r) = \int_r^s \frac{s-z}{z} p'(z) \dd z &\geq \frac{p_\infty}{2} \int_{z_0}^s \frac{s-z}{z} z^{\gamma-1} \dd z \\
    &= \frac {p_\infty} 2 \left( \left(\frac 1 {\gamma - 1} - \frac 1 \gamma \right) s^\gamma - \left(\frac{s}{\gamma -1} - \frac{z_0}{\gamma} \right) z_0^{\gamma -1}\right) \gtrsim s^{\gamma}.
\end{align*}
On the other hand,
\begin{align*}
    p(s) = p(z_0) + \int_{z_0}^s p'(z) \dd z \leq p(z_0) + \frac{2 p_\infty} {\gamma} s^\gamma \lesssim s^\gamma.
\end{align*}
This yields \eqref{H.p} for $s \geq 2 z_0$.
For $2r \leq s \leq 2 z_0$, we simply estimate 
\begin{align*}
   p(s) \lesssim 1 \lesssim \int_r^{2r} \frac{2r -z}{z} p'(z) \dd z \leq \int_r^{s} \frac{s -z}{z} p'(z) \dd z  =  H(s) - H'(r)(s-r) - H(r).
\end{align*}
\end{proof}

\subsection{Manipulation of the relative energy inequality} \label{sec:manip.rel.energy}

By Remark \ref{rem:rel.energy} and Lemma \ref{lem:r.v} we can use $(r_\eps, \vv_\eps)$, defined in \eqref{cor}, in the relative energy inequality \eqref{relen}. With the short notation
\begin{align*}
    E_\eps(\tau) = E_\eps(\rho_\eps, \vu_\eps| r_\eps, \vv_\eps)(\tau),
\end{align*}
we have
\begin{align}\label{relen1}
    E_\eps(\tau) + \int_0^\tau \int_\Oe \eps^2 \big( \mathbb{S}(\nabla \vu_\eps) - \mathbb{S}(\nabla \vv_\eps) \big) : (\nabla\vu_\eps-\nabla\vv_\eps \big) \dd x \dd t \leq E_\eps(0) + \int_0^\tau \mathcal{R}_\eps \dd t,
\end{align}
where the remainder $\mathcal{R}_\eps$ is given by
\begin{align*}
    \mathcal{R}_\eps &= \eps^\lambda\int_\Oe  \rho_\eps (\partial_t \vv_\eps + (\vu_\eps \cdot \nabla) \vv_\eps) \cdot (\vv_\eps - \vu_\eps) \dd x + \eps^2 \int_\Oe  \mathbb{S}(\nabla\vv_\eps) : \nabla (\vv_\eps - \vu_\eps) \dd x \\
    &\quad + \int_\Oe \rho_\eps \vc{f} \cdot (\vu_\eps - \vv_\eps) \dd x + \int_\Oe (r_\eps-\rho_\eps)\partial_t H'(r_\eps) + \nabla H'(r_\eps)\cdot (r_\eps\vv_\eps - \rho_\eps\vu_\eps) \dd x \\
    &\quad - \int_\Oe \dv\vv_\eps (p(\rho_\eps)-p(r_\eps)) \dd x =: \sum_{i=1}^5 \mathcal{R}^i_\eps.
\end{align*}
We start by rewriting the second term on the right-hand side as
\begin{align*}
    \mathcal{R}^2_\eps &= \eps^2  \int_\Oe \vc (-\Delta W^\eps \vu - \vc z_\eps) \cdot (\vv_\eps - \vu_\eps) \dd x 
\end{align*}
with
\begin{align*}
    (\vc z_\eps)_k = \left( \dv \S \big( \nabla \vu \ (W^\eps)^T \big) \right)_k + \left(\sum_{i,j=1}^3 \partial_i  W^\eps_{kj} \partial_i \vu_j + \partial_k  W^\eps_{ij} \partial_i \vu_j\right),
\end{align*}
where we used $\dv W^\eps = 0$. Therefore, $\vc z_\eps$ satisfies
\begin{align*}
    \|\vc z_\eps\|_{L^2(\Oe)} \lesssim \eps^{-1}.
\end{align*}
Next, we use Darcy's law in \eqref{limit.system} satisfied by $(\rho,\vu)$ as well as $\eps^2(-\Delta W^\eps + \nabla \vc q^\eps) = \mathcal K^{-1}$ from \eqref{definition.K} to deduce
\begin{align*}
    \mathcal{R}^2_\eps &=  \eps^2 \int_\Oe (- \nabla \vc q^\eps \vu - \vc z_\eps + \rho \vc f - \nabla p(\rho)) \cdot (\vv_\eps - \vu_\eps) \dd x.
\end{align*}
Using that by definition of $r_\eps$, we have
\begin{align*}
    p(r_\eps) = p(\rho) + \eps^2 \vc q^\eps \cdot \vu,
\end{align*}
we arrive at
\begin{align*}
        \mathcal{R}^2_\eps &=\int_\Oe (\eps^2 \tilde{\vc  z}_\eps + \rho \vc f) \cdot (\vv_\eps - \vu_\eps)  - \nabla p(r_\eps) \cdot (\vv_\eps - \vu_\eps)\dd x,
\end{align*}
where
\begin{align} \label{tilde.z}
    \tilde{\vc  z}_\eps =  \nabla \vu \cdot  \vc q^\eps - \vc z_\eps , \qquad  \|\tilde{\vc  z}_\eps\|_{L^2(\Oe)} \lesssim \eps^{-1}.
\end{align}

Let us also rearrange the terms  $\mathcal R_\eps^i$, $i=4,5$, in combination with the  term $\nabla p(\rho) \cdot (\vv_\eps - \vu_\eps) $ on the right-hand side of $\mathcal{R}^2_\eps$ above.
We start by observing that for any $s$, the definition of $H$ in \eqref{defPressPot} yields the algebraic relations
\begin{align*}
    \partial_t H'(s) = \frac{p'(s)}{s} \partial_t s, \qquad \nabla H'(s)  = \frac{p'(s)}{s} \nabla s = \frac{\nabla p(s)}{s}.
\end{align*}
Thus,
\begin{align*}
    (r_\eps-\rho_\eps)\partial_t H'(r_\eps) + \nabla H'(r_\eps)\cdot (r_\eps\vv_\eps - \rho_\eps\vu_\eps) - \nabla p(r_\eps) \cdot (\vv_\eps - \vu_\eps)  
    &= \left(1 - \frac{\rho_\eps}{r_\eps}\right) p'(r_\eps)\left(\partial_t r_\eps + \vu_\eps \cdot \nabla r_\eps \right).
\end{align*}
We further compute using the continuity equation of $\rho$ from \eqref{limit.system}
\begin{align} \label{manip.R_4,5}
\begin{aligned}
p'(r_\eps)\left(\partial_t r_\eps + \vu_\eps \cdot \nabla r_\eps \right)
    &= \nabla p(r_\eps) \cdot (\vu_\eps - \vv_\eps) - \frac 1 \theta p'(r_\eps) \rho \dv \vu\\
    &\quad + p'(r_\eps)\bigg(\partial_t (r_\eps -\rho) + (W^\eps - \frac 1 \theta \Id) \vu \cdot \nabla \rho + W^\eps \vu \cdot \nabla(r_\eps - \rho) \bigg).
    \end{aligned}
\end{align}
Hence, we arrive at
\begin{align}\label{errors.final}
\begin{aligned}
     \mathcal{R}_\eps &= \eps^\lambda\int_\Oe  \rho_\eps (\partial_t \vv_\eps + (\vu_\eps \cdot \nabla) \vv_\eps) \cdot (\vv_\eps - \vu_\eps) \dd x\\
     &\quad + \int_\Oe (\rho_\eps- \rho) \vc{f} \cdot (\vu_\eps - \vv_\eps) \dd x\\
     &\quad + \int_\Oe -\left( 1 - \frac{\rho_\eps}{r_\eps} \right) \frac 1 \theta p'(r_\eps) \rho \dv \vu  - \dv\vv_\eps (p(\rho_\eps)-p(r_\eps)) \dd x \\
     &\quad + \int_\Oe \left(1 - \frac{\rho_\eps}{r_\eps} \right) \nabla p(r_\eps) \cdot (\vu_\eps - \vv_\eps) \dd x \\
     &\quad + \int_\Oe \left( 1 - \frac{\rho_\eps}{r_\eps} \right) p'(r_\eps) \bigg(\partial_t (r_\eps -\rho) + \bigg(W^\eps - \frac 1 \theta \Id \bigg) \vu \cdot \nabla \rho + W^\eps \vu \cdot \nabla(r_\eps - \rho) \bigg)\dd x \\
     &\quad + \int_\Oe \eps^2 \tilde{\vc z}_\eps \cdot (\vv_\eps - \vu_\eps) \dd x\\
     &=: \sum_{k=1}^6 I_k.
     \end{aligned}
\end{align}
Our ultimate goal is to estimate the error terms by the relative energy $E_\eps$ plus a small error. This will be carried out in the next section.

\subsection{Estimate of the error terms} \label{sec:estimate.errors}

We can now estimate the terms $I_k$ for any $\delta > 0$ with a suitable $C_\delta < \infty$ as follows.
The easiest is $I_6$ which in view of \eqref{tilde.z} and \eqref{Poinc} is estimated as
\begin{align} \label{I_6}
    |I_6| \lesssim \delta \eps^2 \|\nabla(\vu_\eps - \vv_\eps)\|_{L^2(\Oe)}^2 + C_\delta \eps^2.
\end{align}

For $I_1$, we estimate
\begin{align} \label{I_1.1}
\begin{aligned}
    |I_1|&\leq 
\bigg| \eps^\lambda \int_\Oe \rho_\eps (\partial_t \vv_\eps + (\vv_\eps \cdot \nabla) \vv_\eps) \cdot (\vv_\eps - \vu_\eps) \dd x \bigg| \\
&\quad+ \bigg| \eps^\lambda \int_\Oe \rho_\eps (\partial_t \vv_\eps + ((\vu_\eps-\vv_\eps) \cdot \nabla) \vv_\eps) \cdot (\vv_\eps - \vu_\eps) \dd x \bigg| \\
&\leq  \eps^{\lambda-1} \|\rho_\eps\|_{L^2(\Oe)} \|\vv_\eps -\vu_\eps\|_{L^2(\Oe)} + \eps^{\lambda -1} \|\rho_\eps \|_{L^\gamma(\Oe)} \| \vv_\eps -\vu_\eps \|^2_{L^{\frac {2\gamma}{\gamma -1}}(\Oe)} \\
&\ls \eps^{\lambda} \|\nabla(\vv_\eps - \vu_\eps)\|_{L^2(\Oe)} + \eps^{\lambda-2 + \frac{6(\gamma-1)}{2\gamma}} \|\nabla(\vv_\eps - \vu_\eps)\|^2_{L^2(\Oe)}\\
&\leq  C_\delta \eps^{2\lambda -2} + \eps^2 \delta\|\nabla (\vv_\eps - \vu_\eps)\|_{L^2(\Oe)}^2 + C \eps^{\lambda-2 + \frac{6(\gamma-1)}{2\gamma}} \|\nabla(\vv_\eps - \vu_\eps)\|^2_{L^2(\Oe)},
\end{aligned}
\end{align}
where we used 
\eqref{interpolation} with $p=2 \gamma/(\gamma-1)$, the a priori estimates \eqref{bds} as well as \eqref{v_eps}--\eqref{nabla.v_eps}, and the Poincar\'e inequality \eqref{Poinc}.

Alternatively, we may employ similar estimates to deduce 
\begin{align} 
    |I_1|&\leq 
\bigg| \eps^\lambda \int_\Oe r_\eps (\partial_t \vv_\eps + (\vu_\eps \cdot \nabla) \vv_\eps) \cdot (\vv_\eps - \vu_\eps) \dd x \bigg| \nonumber\\
&\quad+ \bigg| \eps^\lambda \int_\Oe (\rho_\eps - r_\eps) (\partial_t \vv_\eps + (\vv_\eps \cdot \nabla) \vv_\eps) \cdot (\vv_\eps - \vu_\eps) \dd x \bigg| \nonumber \\
&\quad+ \bigg| \eps^\lambda \int_\Oe (\rho_\eps - r_\eps) ((\vu_\eps - \vv_\eps) \cdot \nabla) \vv_\eps \cdot (\vv_\eps - \vu_\eps) \dd x \bigg|  \nonumber\\
&\leq  \eps^\lambda  \|\partial_t \vv_\eps + (\vu_\eps \cdot \nabla) \vv_\eps \|_{L^2(\Oe)} \|\vv_\eps -\vu_\eps\|_{L^2(\Oe)}  + \eps^{\lambda-1} \|\rho_\eps - r_\eps\|_{L^2(\Oe)}  \| \vv_\eps -\vu_\eps \|_{L^2(\Oe)} \label{I_1.2} \\
&\quad + \eps^{\lambda -1} \|\rho_\eps - r_\eps\|_{L^2(\Oe)} \| \vv_\eps -\vu_\eps \|^2_{L^4(\Oe)}  \nonumber\\
&\ls \eps^{\lambda} (1 + \|\rho_\eps - r_\eps\|_{L^2(\Oe)}) \|\nabla(\vv_\eps - \vu_\eps)\|_{L^2(\Oe)}  + \eps^{\lambda - \frac 1 2} \|\rho_\eps - r_\eps\|_{L^2(\Oe)} \| \nabla(\vv_\eps -\vu_\eps) \|^2_{L^2(\Oe)} \nonumber \\
&\leq  C_\delta \eps^{2\lambda -2} (1 + \|\rho_\eps - r_\eps\|^2_{L^2(\Oe)})  + \eps^2 \delta\|\nabla (\vv_\eps - \vu_\eps)\|_{L^2(\Oe)}^2 + C \eps^{\lambda - \frac 1 2} \|\rho_\eps - r_\eps\|_{L^2(\Oe)} \| \nabla(\vv_\eps -\vu_\eps) \|^2_{L^2(\Oe)}. \nonumber
\end{align}

We will use both estimates on $I_1$ in order to get better values on $\lambda$ dependent on the value of $\gamma$, see Section~\ref{sec:conclusion}.

Next, we observe that by the assumption $\vc f \in L^\infty((0,T) \times \Omega)$ and \eqref{v_eps}, we have, using again the Poincar\'e inequality \eqref{Poinc} and \eqref{rho_cor},
\begin{align} \label{I_2+4}
\begin{split}
    |I_2| + |I_4| &\ls \|(\rho_\eps - r_\eps) + (r_\eps - \rho)\|_{L^2(\Oe)} \|\vu_\eps - \vv_\eps\|_{L^2(\Oe)} + \|\rho_\eps - r_\eps\|_{L^2(\Oe)} \|\vu_\eps - \vv_\eps\|_{L^2(\Oe)}\\
    &\leq \delta \eps^2 \|\nabla (\vu_\eps - \vv_\eps)\|_{L^2(\Oe)}^2 +C_\delta \|r_\eps - \rho_\eps\|_{L^2(\Oe)}^2 + C_\delta \eps^2.
\end{split}
\end{align}

Regarding $I_3$, we rewrite $I_3 = I_{3,1} + I_{3,2} + I_{3,3}$ with
\begin{align*}
    I_{3,1} &= -\int_\Oe \frac 1 \theta (p(\rho_\eps)- p'(r_\eps)(\rho_\eps -r_\eps) - p(r_\eps)) \dv\vu \dd x, \\
    I_{3,2} &= \int_\Oe \frac 1 \theta p'(r_\eps) (\rho_\eps - r_\eps)\left( \frac{\rho}{r_\eps} - 1 \right) \dv \vu \dd x, \\
    I_{3,3} &= \int_\Oe \dv \bigg(\frac 1 \theta \vu - \vv_\eps \bigg) (p(\rho_\eps) - p(r_\eps)) \dd x.
\end{align*}
For $I_{3,1}$, we use $\|\dv \vu\|_\infty \lesssim 1$ and \eqref{diff.H} in combination with \eqref{r_eps} to deduce
\begin{align}\label{I_31}
    |I_{3,1}| \ls \|p(\rho_\eps)- p'(r_\eps)(\rho_\eps -r_\eps) - p(r_\eps)\|_{L^1(\Oe)} \ls \|\rho_\eps - r_\eps\|_{L^2(\Oe)}^2 + \|p(\rho_\eps) \1_{\{\rho_\eps \geq 2 r_\eps\}}\|_{L^1(\Oe)}. 
\end{align}
Similarly, using that $\|p'(r_\eps) \dv \vu \|_{L^\infty((0,T) \times \Oe)}$ is uniformly bounded, $r_\eps$ is uniformly bounded below and above, and rewriting
\begin{align*}
    (\rho_\eps - r_\eps) \Big(1-\frac{\rho}{r_\eps}\Big) = \frac{1}{r_\eps}(\rho_\eps - r_\eps) ( r_\eps - \rho),
\end{align*}
we deduce in view of \eqref{r_eps} and \eqref{rho_cor}
\begin{align}\label{I_32}
    |I_{3,2}| \lesssim \|r_\eps - \rho_\eps\|_{L^2(\Oe)}^2 + \|r_\eps - \rho\|_{L^2(\Oe)}^2 \lesssim \|r_\eps - \rho_\eps\|_{L^2(\Oe)}^2 + \eps^2.
\end{align}

For $I_{3,3}$, we first consider the term containing $p(r_\eps)$. Since $\vu$ is a regular solution to the limit system \eqref{limit.system} and $\nabla r_\eps$ is uniformly bounded by \eqref{v_eps}, we deduce together with \eqref{W^eps.H^-1} and $\dv W^\eps = 0$
\begin{align*}
    \int_\Oe p(r_\eps) \dv \bigg(\frac 1 \theta \vu - \vv_\eps \bigg) \dd x = \int_\Oe p(r_\eps) \bigg(\frac 1 \theta \Id - W^\eps \bigg):\nabla \vu \dd x \lesssim \eps \|p(r_\eps) \nabla \vu \|_{W^{1,1}(\Oe)} \lesssim \eps.
\end{align*}
To handle the remainder of $I_{3,3}$, we use $P_\eps = p(\rho_\eps)$ in $\Omega_\eps$, where $P_\eps$ is the pressure extension from Lemma \ref{lem:pressure.decomp},  yielding
\begin{align*}
    &\int_0^\tau \int_{\Omega_\eps} p(\rho_\eps) \bigg(\frac 1 \theta \Id - W^\eps \bigg):\nabla \vu \dd x \dd t = \int_0^\tau \int_{\Omega} P_\eps \bigg(\frac 1 \theta \Id - W^\eps \bigg): \nabla \vu \ \1_{\Omega_\eps} \dd x \dd t.
\end{align*}
Recalling from \eqref{i2} the notation $\| \cdot \|_{L_\eps^q} = \eps \| \cdot \|_{L^q(\Omega)} + \| \cdot \|_{[W^{1,q'}(\Omega)]'}$, we have by Lemma~\ref{lem:r.v} and since  $\nabla \vu \in W^{1,\infty}(0,T; L^\infty(\Omega))$
\begin{align}\label{i1}
    \bigg\| \bigg(\frac 1 \theta \Id - W^\eps \bigg): \nabla \vu \ \1_{\Omega_\eps} \bigg\|_{W^{1,\infty}(0,T;L_\eps^\infty(\Omega))} \lesssim \eps.
\end{align}
%
Hence, Lemma \ref{lem:pressure.decomp} with $p = \tfrac{2\gamma}{\gamma-1}$ yields 
\begin{align*}
    \left|\int_0^\tau \int_{\Omega_\eps} p(\rho_\eps) \bigg(\frac 1 \theta \Id - W^\eps \bigg):\nabla \vu \dd x \dd t\right| \lesssim \eps + \eps^{\lambda - \frac 3 \gamma} + \eps^\lambda + \eps^{\frac{\lambda}{2} +1} \lesssim \eps + \eps^{\lambda - \frac 3 \gamma},
\end{align*}
so that in total
\begin{align}\label{I_33}
    \bigg|\int_0^\tau I_{3,3} \dd t \bigg| \ls \eps + \eps^{\lambda - \frac 3 \gamma} .
\end{align}
Combining \eqref{I_31}, \eqref{I_32}, and \eqref{I_33}, we conclude
\begin{align} \label{I_3}
    \bigg| \int_0^\tau I_3 \dd t \bigg| \lesssim \eps + \eps^{\lambda - \frac3\gamma} + \int_0^\tau \|r_\eps - \rho_\eps\|_{L^2(\Oe)}^2 +\|p(\rho_\eps) \1_{\{\rho_\eps \geq 2 r_\eps\}}\|_{L^1(\Oe)} \dd t.
\end{align}

Very similarly, we estimate $I_5$. We split again $I_5 = I_{5,1} + I_{5,2} +  I_{5,3} + I_{5,4} + I_{5,5}$ with
\begin{align*}
    I_{5,1} &= \int_\Oe \left( 1 - \frac{\rho_\eps}{r_\eps} \right)p'(r_\eps)\partial_t (r_\eps -\rho) \dd x, \\
    I_{5,2} &= \int_\Oe \frac{1}{r_\eps} \bigg(W^\eps - \frac 1 \theta \Id \bigg) \vu \cdot \nabla \rho (p(\rho_\eps) - p'(r_\eps)(\rho_\eps - r_\eps) - p(r_\eps)) \dd x ,\\
    I_{5,3} &= -\int_\Oe \frac{1}{r_\eps} \bigg(W^\eps - \frac 1 \theta \Id \bigg)\vu \cdot \nabla \rho (p(\rho_\eps) - p(r_\eps)) \dd x , \\
     I_{5,4} &= \int_\Oe \frac{1}{r_\eps} W^\eps \vu \cdot \nabla (r_\eps - \rho) (p(\rho_\eps) - p'(r_\eps)(\rho_\eps - r_\eps) - p(r_\eps)) \dd x ,\\
     I_{5,5} &= -\int_\Oe \frac{1}{r_\eps} W^\eps \vu \cdot \nabla (r_\eps - \rho) (p(\rho_\eps) - p(r_\eps)) \dd x .
\end{align*}

We first estimate
\begin{align*}
   | I_{5,1}| \lesssim \|\rho_\eps - r_\eps\|_{L^2(\Oe)}^2 + \|\partial_t(r_\eps - \rho)\|_{L^2(\Oe)}^2 \lesssim \|\rho_\eps - r_\eps\|_{L^2(\Oe)}^2 + \eps^2,
\end{align*}
where we used \eqref{r_eps} and \eqref{rho_cor}. Moreover, using that $\|(W^\eps - \frac 1 \theta \Id ) \vu \cdot \nabla \rho\|_{L^\infty((0,T)\times \Oe)} \lesssim 1$ and $\|\frac{1}{r_\eps} W^\eps \vu \cdot \nabla (r_\eps - \rho)\|_{L^\infty((0,T)\times \Oe)} \lesssim 1$ due to \eqref{W.q.infty} and \eqref{r_eps}--\eqref{v_eps}, we can treat $I_{5,2}$ and $I_{5,4}$ as $I_{3,1}$ above, yielding
\begin{align*}
    | I_{5,2}| +| I_{5,4}|  \lesssim \|\rho_\eps - r_\eps\|_{L^2(\Oe)}^2 + \|p(\rho_\eps) \1_{\{\rho_\eps \geq 2 r_\eps\}}\|_{L^1(\Oe)}.
\end{align*}
Similarly, $I_{5,3}$ can be estimated exactly as $I_{3,3}$ above, furnishing
\begin{align*}
    \bigg|\int_0^\tau I_{5,3} \dd t \bigg| \ls  \eps + \eps^{\lambda - \frac 3 \gamma} .
\end{align*}

Lastly, $I_{5,5}$ is treated with the help of the pressure decomposition much as above for $I_{3,3}$. Indeed, noticing that $W^\eps \vu = 0$ on $\partial \Oe$, we may integrate by parts to the effect of
\begin{align*}
    \bigg| \int_\Oe \frac{p(r_\eps)}{r_\eps} W^\eps \vu \cdot \nabla (r_\eps - \rho) \dd x \bigg| &= \bigg|\int_\Oe (r_\eps - \rho) \dv(W^\eps \vu \ p(r_\eps)/r_\eps) \dd x \bigg|\\
    &\lesssim \|r_\eps - \rho\|_{L^2(\Omega)} \|W^\eps : \nabla (\vu \ p(r_\eps)/r_\eps)\|_{L^2(\Omega)} \lesssim \eps.
\end{align*}
The term involving $p(\rho_\eps)$ is then split through the pressure decomposition from Lemma~\ref{lem:pressure.decomp}. Indeed, using estimates in Lemma \ref{lem:r.v}, we find through integration by parts, recalling $W_\eps \vu = \vv_\eps = 0$ in $\Omega \setminus \Oe$,
\begin{align*}
    \bigg| \int_{\Omega} \frac{\vv_
    \eps}{r_\eps} \cdot \nabla (r_\eps - \rho) \varphi \dd x \bigg| = \bigg| \int_{\Oe} \dv\left(\frac{W^\eps \vu}{r_\eps} \varphi \right) \cdot (r_\eps - \rho) \dd x \bigg| \lesssim \eps \|\varphi\|_{W^{1,1}(\Omega)} \quad \text{for all } \varphi \in W^{1,1}(\Omega),
\end{align*}
that is, $\|\frac{\vv_\eps}{r_\eps} \nabla (r_\eps - \rho)\|_{[W^{1,1}(\Omega)]'} \lesssim \eps$. Hence,
\begin{align} \label{est:int.part}
    \bigg\| \frac{\vv_\eps}{r_\eps} \cdot \nabla (r_\eps - \rho) \bigg\|_{W^{1,\infty}(0,T;L_\eps^\infty(\Omega))} \lesssim \eps,
\end{align}
and by Lemma \ref{lem:pressure.decomp}, 
\begin{align*}
     \bigg|\int_0^\tau I_{5,5} \dd t \bigg| \lesssim  \eps + \eps^{\lambda - \frac 3 \gamma}.
\end{align*}

In total, we have estimated
\begin{align} \label{I_5}
    \bigg| \int_0^\tau I_5 \dd t \bigg| \lesssim \eps + \eps^{\lambda - \frac3\gamma} + \int_0^\tau \|r_\eps - \rho_\eps\|_{L^2(\Oe)}^2 + \|p(\rho_\eps) \1_{\{\rho_\eps \geq 2 r_\eps\}}\|_{L^1(\Oe)} \dd t.
\end{align}

\subsection{Conclusion} \label{sec:conclusion}

Combining estimates \eqref{I_6}, \eqref{I_2+4}, \eqref{I_3}, and \eqref{I_5} yields
\begin{align*}
    \int_0^\tau \mathcal R_\eps \dd t \leq \int_0^\tau C_\delta \|r_\eps - \rho_\eps\|_{L^2(\Oe)}^2 &+ C_\delta\|p(\rho_\eps) \1_{\{\rho_\eps \geq 2 r_\eps\}}\|_{L^1(\Oe)}  + C \delta\eps^2\|\nabla(\vu_\eps -\vv_\eps)\|_{L^2(\Oe)}^2 \dd t \\
    &+ C_\delta ( \eps^{2\lambda -2} + \eps  + \eps^{\lambda-\frac3\gamma}),
\end{align*}
for $\eps$ sufficiently small, where we also used \eqref{I_1.1} under the assumption
\begin{align}\label{lambda.1}
    \lambda-2 + \frac{6(\gamma-1)}{2\gamma} > 2 \quad \Longleftrightarrow \quad \lambda > 1 + \frac 3 \gamma.
\end{align}
In view of \eqref{r_eps}, \eqref{diff.H}, and \eqref{def.E}, we have
\begin{align} \label{E.controls.density}
      \|r_\eps - \rho_\eps\|_{L^2(\Oe)}^2 + \|p(\rho_\eps) \1_{\{\rho_\eps \geq 2 r_\eps\}}\|_{L^1(\Oe)} \lesssim E_\eps(t).
\end{align}
Thus, by \eqref{relen1} and using Korn's inequality to absorb the term $C \delta \eps^2 \|\nabla(\vu_\eps -\vv_\eps)\|_{L^2(\Oe)}^2$ by choosing $\delta$ sufficiently small, we infer for all $\tau \in [0,T]$
\begin{align*}
    E_{\eps}(\tau) + \eps^2\|\nabla(\vu_\eps -\vv_\eps)\|_{L^2((0,\tau) \times \Oe)}^2 \lesssim E_{\eps}(0) +  \int_0^\tau E_{\eps}(t) \dd t + \eps + \eps^{2\lambda -2} + \eps^{\lambda - \frac3\gamma},
\end{align*}
from which we deduce with Gr\"onwall's inequality that
\begin{align} \label{Gronwall.result}
    E_{\eps}(\tau) +  \eps^2\|\nabla(\vu_\eps -\vv_\eps)\|_{L^2((0,T) \times \Oe)}^2 \lesssim E_{\eps}(0) + \eps +  \eps^{2\lambda -2} + \eps^{\lambda - \frac3\gamma}.
\end{align}
It remains to estimate the left-hand side of \eqref{est.thm} by the left-hand side above and the right-hand side above by the right-hand side of \eqref{est.thm}. 

For the left-hand sides, we apply first \eqref{W^eps.H^-1.Omega} to $\vv_\eps -\vu = (W_\eps - \Id)\vu$ and \eqref{rho_cor} and then \eqref{diff.H} and the Poincar\'e inequality \eqref{Poinc} to deduce
\begin{align} \label{est.final.lhs}
    \begin{aligned}
    \|(\rho_\eps - \rho)(\tau)\|_{L^2(\Oe)}^2  + \|\vu_\eps -\vu\|_{L^2(0,T;W^{-1,2}(\Omega))}^2 & \lesssim \|(\rho_\eps - r_\eps)(\tau)\|_{L^2(\Oe)}^2  + \|\vu_\eps -\vv_\eps\|_{L^2((0,T) \times \Omega)}^2 + \eps^2 \\
    &\lesssim E_\eps(\tau) + \eps^2 \|\nabla (\vu_\eps - \vv_\eps)\|_{L^2((0,T) \times \Oe)}^2 + \eps^2.
    \end{aligned}
\end{align}

Regarding the right-hand side of \eqref{Gronwall.result}, we observe that
\begin{align*}
    &|H(\rho_\eps) - H'(r_\eps)(\rho_\eps - r_\eps) - H(r_\eps) - [H(\rho_\eps) - H'(\rho)(\rho_\eps - \rho) - H(\rho)]| \\
    &\leq |(H'(\rho) - H'(r_\eps))(\rho_\eps - r_\eps)| + |H(r_\eps) - H'(\rho)(r_\eps - \rho) -  H(\rho)  | \\
    &\lesssim |(\rho-r_\eps)(\rho_\eps-r_\eps)| + |\rho- r_\eps|^2 \lesssim |\rho - r_\eps|^2 + |\rho_\eps - \rho|^2 \lesssim |\rho_\eps - \rho|^2 + \eps^2,
\end{align*}
where we used that $H\in C^2$, and that both $r_\eps$ and $\rho$ are uniformly bounded above and below by \eqref{r_eps} and close to each other by \eqref{rho_cor}. Thus, using again \eqref{diff.H} as well as $\|\rho_{\eps0}\|_{L^1(\Oe)} \lesssim 1 $ by \eqref{est.initial} and $\|\vv_\eps(0,\cdot)\|_{L^\infty(\Oe)} \lesssim 1$ by \eqref{v_eps}, we have
\begin{align} \label{est.final.rhs}
\begin{aligned}
    E_\eps(0) &\lesssim \|\rho_{\eps 0} - \rho_0\|_{L^2(\Oe)}^2 + \int_\Oe H(\rho_{\eps 0}) - H'(\rho_0)(\rho_{\eps 0} - \rho_0) - H(\rho_0) \dd x + \eps^2 + \eps^\lambda \int_{\Oe} \rho_{\eps 0} |\vu_{\eps 0} - \vv_{\eps}(0,\cdot)|^2  \dd x\\
    &\lesssim  \int_\Oe H(\rho_{\eps 0}) - H'(\rho_0)(\rho_{\eps 0} - \rho_0) - H(\rho_0) \dd x + \eps^2 + \eps^\lambda \|\rho_{\eps 0} |\vu_{\eps 0}|^2\|_{L^1(\Oe)}. 
\end{aligned}
\end{align}
Inserting \eqref{est.final.lhs}--\eqref{est.final.rhs} in \eqref{Gronwall.result}  yields \eqref{est.thm}, which concludes the proof of Theorem~\ref{thm1} in the case $\Omega = \mathbb T^3$.

Alternatively, by using \eqref{I_1.2} instead of \eqref{I_1.1}, we have
\begin{align*}
    \int_0^\tau \mathcal R_\eps \dd t \leq &\int_0^\tau C_\delta (1+  \eps^{2\lambda -2}) \|r_\eps - \rho_\eps\|_{L^2(\Oe)}^2 + C_\delta\|p(\rho_\eps) \1_{\{\rho_\eps \geq 2 r_\eps\}}\|_{L^1(\Oe)}\\
    &+ C (\delta\eps^2 + \eps^{\lambda - \frac 1 2} \|\rho_\eps - r_\eps\|_{L^2(\Oe)} ) \|\nabla(\vu_\eps -\vv_\eps)\|_{L^2(\Oe)}^2 \dd t +  C_\delta ( \eps^{2\lambda -2} +  \eps  + \eps^{\lambda-\frac3\gamma}).
\end{align*}
For $\delta > 0$ define
\begin{align}\label{T.eps}
    T_\eps := \sup \left\{t \in [0,T] : \eps^{2 \lambda - 1} \sup_{s \in [0,t]} E_\eps(s) \leq  \delta^2 \eps^4 \right\}.
\end{align}
The relative entropy inequality implies
\begin{align} \label{right.lower.semicontinuity}
    \limsup_{s \downarrow t} E_\eps(s) \leq E_\eps(t) \qquad \text{for all } t \in [0,T].
\end{align}
Hence, $T_\eps > 0$ by choosing $\delta_0$ in \eqref{zusatz} sufficiently small. Then, by \eqref{E.controls.density}, there exists $\delta >0$ such that for all $\tau \in [0,T_\eps]$ we have
\begin{align*}
    E_{\eps}(\tau) + \eps^2\|\nabla(\vu_\eps -\vv_\eps)\|_{L^2((0,\tau) \times \Oe)}^2 \lesssim E_{\eps}(0) +  \int_0^\tau E_{\eps}(t) \dd t + \eps^\beta,
\end{align*}
and by Gr\"onwall's inequality
\begin{align} \label{E.T.eps}
    E_{\eps}(\tau) + \eps^2\|\nabla(\vu_\eps -\vv_\eps)\|_{L^2((0,\tau) \times \Oe)}^2 \lesssim E_{\eps}(0)  + \eps^\beta,
\end{align}
where 
 the implicit constant in this estimate is independent of $T_\eps$ since $T_\eps \leq T$, and
$\beta>0$ is as in Theorem~\ref{thm1}.
Hence, proceeding as before, we arrive at the desired estimate
\eqref{est.thm} in the time interval $(0,T_\eps)$.
The following standard continuity argument then ensures that $T_\eps = T$ for $\eps$ sufficiently small. 
Indeed, assume $T_\eps < T$. Then, \eqref{T.eps} and \eqref{right.lower.semicontinuity}  imply $\eps^{2 \lambda - 1} E_\eps(T_\eps) = \delta^2 \eps^4$. On the other hand,  \eqref{E.T.eps} combined with \eqref{zusatz} yields $\eps^{2 \lambda - 1} E_\eps(T_\eps) \lesssim \eps^4 \delta_0 <\delta^2 \eps^4$  for  $\delta_0$ sufficiently small, which is a contradiction; hence, $T_\eps = T$.

\section{Adaptations for bounded domains}\label{sec:bdDom}
\providecommand{\tve}{\tilde \vv_\eps}
In this section, we  discuss the necessary adaptations to treat smooth bounded domains $\Omega \subset \R^3$.

First, the \emph{constant} porosity $\theta$ defined in \eqref{theta} is now rather the \emph{asymptotic} porosity. Still, it obviously holds
\begin{align*}
    \lim_{\eps \to 0} \frac{|\Omega_\eps|}{|\Omega|} = \theta.
\end{align*}

Next, the main  issue is that we are no longer allowed to take $\vv_\eps$ as a test function in the relative energy since in general $\vv_\eps |_{\partial \Omega} \neq 0$. To overcome this problem, we introduce a boundary layer corrector in Proposition~\ref{pro.boundary} below. The main difficulty is that the boundary layer corrector needs to still have a bounded divergence such that the relative energy argument can be adapted. As detailed below, we obtain the boundary layer by cutting off a periodic vector potential of $W^\eps - \mathcal K$ and to use that $\vu \cdot \vc n = 0$ on $\partial \Omega$. For related but different approaches to obtain boundary layer correctors in homogenization problems for the Stokes equations, we refer to \cite{Marusic-PalokaMikelic1996, Shen2022}. \\

Recall the definition of $\vv_\eps = W^\eps \vu = W(\cdot / \eps) \mathcal{K}^{-1} \vu = W(\cdot / \eps) (\rho \vc f - \nabla p(\rho))$ from Section \ref{sec:correctors}. 

We remark that all the definitions in Section \ref{sec:correctors} remain unchanged. In particular, the estimates in Lemmas \ref{lem:r.v} and \ref{lem:diff.H} still hold except for \eqref{W^eps.H^-1}--\eqref{W^eps.H^-1.Omega}. 
Regarding \eqref{W^eps.H^-1}--\eqref{W^eps.H^-1.Omega}, we additionally need to address the cubes close to the boundary $\partial \Omega$.

To this end, note that
\begin{align*}
    \Omega_\eps \setminus \bigcup_{i=1}^{N(\eps)} Q_i^\eps \subset \{x \in \Omega : \dist(x,\partial \Omega) < C \eps\}.
\end{align*}
It follows from Lemma \ref{le:thickened.trace} below that
\begin{align*}
    \|\varphi\|_{L^1\bigl(\Omega_\eps \setminus \bigcup_{i=1}^{N(\eps)} Q_i^\eps\bigr)} \lesssim \eps \|\varphi \|_{W^{1,1}(\Omega)} \qquad \text{for all } \varphi \in W^{1,1}(\Omega).
\end{align*}
In particular, combining this with the argument from Lemma \ref{lem:r.v} for the cubes entirely contained in $\Omega$ yields
\begin{align}
   \bigg\|\left(\frac 1 \theta \Id - W^\eps\right) \1_{\Omega_\eps} \bigg\|_{[W^{1,1}(\Omega)]'} \ls \eps, \label{W^eps.Psi.boundary}\\
    \|\Id - W^\eps \|_{[W^{1,1}(\Omega)]'} \ls \eps.\label{W^eps.Psi.boundary.Omega}
\end{align}

\begin{lem}\label{le:thickened.trace} 
    Let $U \subset \R^n$ be a bounded Lipschitz domain. Then, 
    there exists a constant $C$ depending only on $U$ such that for all $\delta > 0$
    \begin{align*}
        \|\varphi\|_{L^1(U_\delta)} \leq C \delta \|\varphi\|_{W^{1,1}(U)} \quad \text{for all } \varphi \in W^{1,1}(U),
    \end{align*}
    where $U_\delta := \{ x \in U : \dist(x,\partial U) < \delta\}$.
\end{lem}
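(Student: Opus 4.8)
The plan is to prove Lemma~\ref{le:thickened.trace} by a standard localization and flattening argument. First I would use a finite atlas for the Lipschitz boundary $\partial U$: cover $\partial U$ by finitely many open sets $V_1,\dots,V_M$ such that, in each $V_k$ (after a rigid motion), $U \cap V_k$ is the subgraph $\{(y',y_n) : y_n < \gamma_k(y')\}$ of a Lipschitz function $\gamma_k$. Since $\partial U$ is compact and $U$ is bounded, for $\delta$ small enough the thin strip $U_\delta$ is contained in $\bigcup_k V_k$, and by a subordinate partition of unity it suffices to bound $\|\varphi\|_{L^1(U_\delta \cap V_k)}$ for each $k$. (For $\delta$ bounded away from $0$ the estimate is trivial since $U_\delta \subseteq U$ and $\delta^{-1}$ is then comparable to $1$, so we lose nothing by assuming $\delta$ small.)

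Next, in a single chart I would reduce to the model half-space estimate: for $\varphi \in W^{1,1}$ on a neighborhood of $\{y_n \le 0\}$, one has $\|\varphi\|_{L^1(\{-\delta < y_n < 0\})} \lesssim \delta \|\varphi\|_{W^{1,1}}$. This is the one-dimensional fundamental theorem of calculus applied in the $y_n$-direction: write $\varphi(y',y_n) = \varphi(y', -\delta) + \int_{-\delta}^{y_n} \partial_n \varphi(y',t)\dd t$ for $-\delta < y_n < 0$, integrate $|\varphi|$ over the slab, and use that $\int_{-\delta}^{0}\int |\varphi(y',-\delta)| \dd y' \dd y_n = \delta \int |\varphi(y',-\delta)|\dd y'$, where the latter is controlled by $\|\varphi\|_{W^{1,1}}$ via the usual trace/averaging inequality (average over $y_n \in (-2\delta,-\delta)$ and again apply the one-dimensional fundamental theorem of calculus to remove the trace). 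The Lipschitz change of variables $y \mapsto (y', y_n - \gamma_k(y'))$ transforms $U \cap V_k$ into a half-space neighborhood, distorts $U_\delta$ into a set contained in a slab of thickness $C_k \delta$ (with $C_k$ depending only on $\|\nabla \gamma_k\|_\infty$), and changes the $W^{1,1}$ norm by a factor depending only on $\mathrm{Lip}(\gamma_k)$; composing these gives the bound in each chart.

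Finally I would sum over $k = 1,\dots,M$ using the partition of unity: $\|\varphi\|_{L^1(U_\delta)} \le \sum_k \|\chi_k \varphi\|_{L^1(U_\delta)} \lesssim \delta \sum_k \|\chi_k \varphi\|_{W^{1,1}(U)} \lesssim \delta \|\varphi\|_{W^{1,1}(U)}$, the last step using $|\nabla(\chi_k \varphi)| \le |\nabla \chi_k|\,|\varphi| + \chi_k |\nabla \varphi|$ and the boundedness of $\|\nabla \chi_k\|_\infty$. The resulting constant $C$ depends only on the atlas, the Lipschitz constants $\mathrm{Lip}(\gamma_k)$, and the cutoffs $\chi_k$, hence only on $U$, as claimed.

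The main obstacle is purely bookkeeping rather than conceptual: one must check that the Lipschitz graph change of variables maps the intrinsic collar $U_\delta = \{\dist(\cdot,\partial U) < \delta\}$ into a genuine flat slab of comparable thickness (the Euclidean distance to $\partial U$ and the vertical distance $\gamma_k(y') - y_n$ to the graph are comparable up to a factor depending on $\mathrm{Lip}(\gamma_k)$), and that the strip $U_\delta$ is covered by the chart neighborhoods for all sufficiently small $\delta$ — both of which are standard facts for bounded Lipschitz domains. Once these are in place, the estimate is the elementary slab inequality above.
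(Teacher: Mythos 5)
Your proposal follows essentially the same route as the paper: reduce to small $\delta$, localize and flatten the Lipschitz boundary, and then run a one-dimensional fundamental-theorem-of-calculus argument in the normal direction over a thin slab. The outline is correct, and the paper's proof is exactly this (it phrases the slab step as the embedding $W^{1,1}(0,1)\subset C^0(0,1)$ applied on vertical segments).

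One step as you wrote it would not close, though. In the slab estimate you control the trace term $\int |\varphi(y',-\delta)|\dd y'$ by \emph{averaging over $y_n\in(-2\delta,-\delta)$}. Averaging over an interval of length $\delta$ produces
\begin{align*}
\int |\varphi(y',-\delta)|\dd y' \;\leq\; \frac{1}{\delta}\,\|\varphi\|_{L^1(\{-2\delta<y_n<-\delta\})} + \|\partial_n\varphi\|_{L^1},
\end{align*}
so after multiplying by $\delta$ you are left with $\|\varphi\|_{L^1}$ of a neighboring slab of width $\delta$ \emph{without} any factor of $\delta$ --- which is precisely the quantity you are trying to bound, so the argument is circular (or, read differently, the $\delta^{-1}$ from the short averaging window cancels the gain of $\delta$). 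The fix is standard and is what the paper does: average over an interval of \emph{fixed} length (say $(-1,0)$, or $(0,1)$ after reflection), i.e.\ use $|\varphi(x)|\leq \int_0^1 |\varphi(x'+t\e_n)| + |\partial_n\varphi(x'+t\e_n)|\dd t$ for $x_n\leq 1$; then integrating over the slab $\{x_n<\delta\}$ yields the factor $\delta$ from the outer integral while the inner integral is uniformly controlled by $\|\varphi\|_{W^{1,1}}$. With that correction, the rest of your argument (charts, partition of unity, comparability of the intrinsic collar with a flat slab under the bi-Lipschitz flattening) is fine and matches the paper.
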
 
\begin{proof}
    This is proven very similar to the trace estimate $\|\varphi \|_{L^1(\partial U)} \lesssim \|\varphi \|_{W^{1,1}(U)}$. We sketch the argument for the convenience of the reader. 

    First, it suffices to show that there exists $\delta_0 >0$ such that the estimate holds for $\delta < \delta_0$. Indeed, by choosing $C > \delta_0^{-1}$, the estimate then trivially extends to $\delta \geq \delta_0$.
    For $\delta < \delta_0$, we can  cover $U_\delta$ by finitely many balls centered on boundary points whose intersection with $U$  can be mapped by bi-Lipschitz functions to subsets of the half space. It thus suffices to prove the estimate for the half space $U = \R^n_+$,
    and by density, it is enough to consider $\varphi \in C_c^\infty(\overline {\R^n_+})$.
    Then, by the embedding $W^{1,1}(0,1) \subset C^0(0,1)$, we have for all $x \in \R^n_+$ with $x_n \leq 1$
    \begin{align*}
       |\varphi(x)| \leq \int_0^1 |\varphi(x' + t \e_n)| + |\partial_n \varphi(x' + t \e_n)| \dd t,
    \end{align*}
    where $x' = (x_1,\cdots,x_{n-1},0)$. Integrating yields for all $\delta \leq 1$
    \begin{align*}
        \int_{\{x \in \R^n_+ : x_n < \delta\}} |\varphi(x)| &\leq  \int_{\{x \in \R^n_+ : x_n < \delta\}} \int_0^1 |\varphi(x' + t \e_n)| + |\partial_n \varphi(x' + t \e_n)| \dd t \dd x \\
        &\leq \int_0^\delta \int_{\R^{n-1}} \int_0^1 |\varphi(x' + t \e_n)| + |\partial_n \varphi(x' + t \e_n)| \dd t \dd x' \dd s 
        \leq \delta \|\varphi \|_{W^{1,1}(\R_+^n)}.
    \end{align*}
\end{proof}

In order to construct a boundary corrector, we consider the problem of finding a function $\Phi : Q \to \R^{3\times3}$ such that\footnote{As in \eqref{sol.Stokes}, $Q$ shall be thought of the flat torus $\R^3 / (2 \Z)^3$.}
\begin{align*}
\left\{
\begin{aligned}
    \curl \Phi = W - \mathcal K &\quad \text{in } Q , \\
    \Phi &\quad \text{is $Q$-periodic},
\end{aligned}
\right.
\end{align*}
where the curl is taken column-wise, i.e., $(\curl \Phi)_{ij} = \mathcal E_{ikl} \partial_k \Phi_{lj}$, where $\mathcal E$ denotes the Levi-Civita symbol.

Since $\dv W = 0$ and $\int_Q W - \mathcal K \dd x = 0$, there exists a solution $\Phi \in W^{2,\infty}(Q)$ to this problem  and we may take  $\Phi = \curl F$, where $F$ is the unique $Q$-periodic solution to
\begin{align*}
    - \Delta F = W - \mathcal K \qquad \text{ in } Q.
\end{align*}
(Note that by $\dv W = 0$, it follows that $\dv F=0$ and hence $\curl \curl F = -\Delta F $.)
We observe that $F \in W^{3,\infty}(Q)$ by classical potential theory.  Indeed, we see that for any $1 \leq i,j \leq 3$ the function
$H = \partial_i \partial_j F$ can be written as $H_1 + H_2$ with
\begin{align*}
    H_1 &= (\partial_i \partial_j W)|_{Q \setminus \obst} \ast G, \\
    H_2 &= \vc n_i  (\partial_j W)|_{\partial \obst} \mathcal H^2|_{\partial \obst} \ast G,
\end{align*}
where $G$ is the fundamental solution of the Laplacian on the torus, 
$\vc n$ is the outer normal at $\partial \obst$, $\mathcal{H}^2$ is the two-dimensional Hausdorff measure, and $(\nabla W)|_{\partial \obst} $ is the trace of $\nabla W$ taken from 
 $Q \setminus \obst$. Here, we used that $\nabla W = 0$ in $\obst$.
As $(\nabla^2 W)|_{Q \setminus \obst} \in L^\infty(Q \setminus \obst)$, we have $H_1 \in W^{q,2}(Q)$ for all $q < \infty$. 
Moreover, the function $H_2$ is a single layer potential, which can be equivalently written as
\begin{align*}
    H_2(x) = \int_{\partial \obst} \vc n_i  \partial_j W(y) G(x-y) \dd y.
\end{align*}

Since $\vc n \otimes (\nabla W)|_{\partial \obst}$ is smooth, the single layer potential $H_2$ satisfies $H_2 \in W^{1,\infty}(Q)$ (see, e.g., \cite[Theorem VIII in Chapter 5]{Kellogg} for a proof in $\R^3$ instead of the torus. Since the fundamental solution on the torus and on the whole space only differ by a smooth function, the same proof applies.). This yields $F \in W^{3,\infty}(Q)$ and hence $\Phi \in W^{2,\infty}(Q)$. In particular, extending $\Phi$ periodically to the whole of $\R^3$, we have $\Phi \in W^{2,\infty}(\Omega)$.\\ 

Let $d = \dist(\obst,\partial Q) > 0$ and take a cutoff  function $\eta \in C^\infty([0,\infty))$ such that $\eta(0) = 0$ and $\eta  = 1$ on $[d,\infty)$.
Then, we define  $\eta_\eps : \Omega \to \R$ as
\begin{align*}
    \eta_\eps(x) := \eta\left(\frac{\dist(x,\partial \Omega)}\eps\right)
\end{align*}
and 
\begin{align}\label{novyWe}
    \tve(x) := \eps \curl \big( \eta_\eps \Phi(x/\eps) \big) \mathcal K^{-1} \vu + \eta_\eps \vu = \eps \nabla \eta_\eps \times \Phi(x/\eps) + \eta_\eps \vv_\eps(x).
\end{align}
We also define the boundary corrector $\Psi_\eps$ as
\begin{align}\label{bdryCorr}
   \Psi_\eps :=  \tve - \vv_\eps = \eps \curl \big( (\eta_\eps - 1) \Phi(x/\eps) \big) \mathcal K^{-1} \vu + (\eta_\eps-1) \vu.
\end{align}

Clearly, $\tve = 0$ on $\partial \Omega$, and $\tve(x) = \vv_\eps(x)$ for $\dist(x,\partial\Omega) \geq \eps d$.
In particular, $\tve = 0$ in $\Omega \setminus \Oe$.
Moreover, using that $\Phi \in W^{2,\infty}(\Omega)$ and the estimates \eqref{v_eps}--\eqref{nabla.v_eps}, we have for all $q \in [1,\infty]$ and uniformly in time
\begin{align*}
    \|\Psi_\eps\|_{L^q(\Omega)} + \eps \|\nabla \Psi_\eps\|_{L^q(\Omega)} \lesssim   \|1- \eta_\eps\|_{L^q(\Omega)} + \eps \|\nabla \eta_\eps\|_{L^q(\Omega)} + \eps^2 \|\nabla^2 \eta_\eps\|_{L^q(\Omega)}  &\lesssim \eps^\frac{1}{q}.
\end{align*}
Finally, we observe that
\begin{align*}
        \|\dv \big( \eps \curl [ \eta_\eps \Phi(x/\eps) ] \mathcal K^{-1} \vu \big) \|_{L^\infty(\Omega)} &\lesssim  1, 
\end{align*}
and
\begin{align*}
    |\dv (\eta_\eps \vu)(x)| \leq |\eta_\eps(x)| \ |\dv \vu (x)|  + |\vu(x) \cdot \nabla \eta_\eps(x)| \lesssim  1+ |\vu(x) \cdot \nabla \eta_\eps(x)|.
\end{align*}
For $\dist(x, \partial \Omega) < \eps d$, let $P x \in \partial \Omega$ be such that $\dist(x, \partial \Omega) = |x-Px|$. Note that for $\eps$ sufficiently small, $P x$ is unique and $Px - x = \dist(x, \partial\Omega) \vc n$, where $\vc n$ is the outer normal at $Px$. In particular, $\nabla \eta_\eps (x) = - \eta'(x/\eps)  \vc n/\eps$.
Hence, using $\vu(Px) \cdot \vc n = 0$ (due to \eqref{limit.system} and $\rho > 0$ in $\overline \Omega$),
\begin{align*}
    |\vu(x) \cdot \nabla \eta_\eps(x)| = \frac 1 \eps | \eta'(x/\eps) \big( \vu(x) - \vu(Px) \big) \cdot \vc n | \lesssim \frac{1}{\eps} |\nabla \vu(x)| \ |x-Px| \lesssim 1,
\end{align*}
where we used $\nabla \vu \in L^\infty(\Omega)$ in the last estimate.
In total, we deduce
\begin{align*}
    \|\dv \tve \|_{L^\infty(\Omega)} \lesssim 1.
\end{align*}
Combining this with $\|\dv \vv_\eps \|_{L^\infty(\Omega)} \lesssim 1$ and  $\tve = \vv_\eps$ for  $\dist(x, \partial \Omega) > \eps d$ yields
\begin{align*}
    \|\dv \Psi_\eps\|_{L^q(\Omega)} \lesssim \eps^\frac{1}{q}.
\end{align*}

We summarize the above properties of the boundary corrector $\Psi_\eps$ in the following proposition.

\begin{prop} \label{pro.boundary}
    For all $\eps > 0$, there exists $\Psi_\eps \in W^{1,\infty}((0,T)\times\Omega)$ such that 
    \begin{align*}
        \Psi_\eps &= - \vv_\eps \quad \text{on }  \partial \Omega, \\
        \Psi_\eps &= 0 \quad \text{in } \bigcup_{i=1}^{N(\eps)} \mathcal O_i^\eps,
    \end{align*}
    and for any $q \in [1,\infty]$,
\begin{align}
    \|\dv \Psi_{\eps}\|_{W^{1,\infty}(0,T;L^q(\Oe))} &\lesssim \eps^{\frac1q}, \label{div.Psi} \\
     \|\Psi_{\eps}\|_{W^{1,\infty}(0,T;L^q(\Oe))}  + \eps  \|\nabla \Psi_{\eps}\|_{W^{1,\infty}(0,T;L^q(\Oe))}  
     &\lesssim \eps^\frac{1}{q}, \label{Psi.q} \\
     \supp \Psi_\eps \subset \{ x \in \Omega : \dist(x,\partial \Omega) < \eps d \} &\quad \text{for $d = \dist(\obst,\partial Q) > 0$.} \label{Psi.supp}
\end{align}
 \end{prop}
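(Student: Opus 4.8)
The plan is to take $\Psi_\eps$ exactly as in \eqref{bdryCorr}, built from the cutoff $\eta_\eps$ and a periodic matrix-valued vector potential $\Phi$ of $W-\mathcal K$, and to verify in turn the boundary identity, the support property, and the estimates \eqref{div.Psi}--\eqref{Psi.supp}. The first ingredient I would nail down is the regularity $\Phi\in W^{2,\infty}(Q)$. Solving $-\Delta F = W-\mathcal K$ on the flat torus $\R^3/(2\Z)^3$ (solvable since $\int_Q (W-\mathcal K)\dd x = 0$ by \eqref{def:K}) and setting $\Phi=\curl F$, one has $\curl\curl F=-\Delta F=W-\mathcal K$ because $\dv W=0$ forces $\dv F=0$. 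To upgrade to $F\in W^{3,\infty}$ I would write each second derivative $\partial_i\partial_j F$ as a volume potential of $(\partial_i\partial_j W)|_{Q\setminus\obst}$, which lies in $W^{q,2}(Q)$ for every $q<\infty$, plus a single-layer potential supported on $\partial\obst$ with smooth density $\vc n_i(\partial_j W)|_{\partial\obst}$ — here one uses that $\nabla W\equiv 0$ inside $\obst$, so only a jump across $\partial\obst$ survives — and then invoke the classical $W^{1,\infty}$-bound for single-layer potentials with smooth density (as in \cite[Ch.~5]{Kellogg}, transferred to the torus since the periodic and whole-space fundamental solutions differ by a smooth function). This gives $\Phi\in W^{2,\infty}(Q)$ and, after periodic extension, $\Phi\in W^{2,\infty}(\Omega)$.

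Granting this, the identity $\Psi_\eps=-\vv_\eps$ on $\partial\Omega$ and the support property \eqref{Psi.supp} are immediate from $\eta_\eps|_{\partial\Omega}=0$ and $\eta_\eps\equiv 1$ where $\dist(\cdot,\partial\Omega)\ge\eps d$; in particular $\Psi_\eps=0$ in the holes. For the size bounds \eqref{Psi.q} I would use that $\eta_\eps$ varies on the $\eps$-scale inside the neighborhood $\{\dist(\cdot,\partial\Omega)<\eps d\}$ of measure $O(\eps)$, so that $\|\nabla^k\eta_\eps\|_{L^q(\Omega)}\lesssim\eps^{1/q-k}$, combine this with $\|\Phi\|_{W^{2,\infty}}\lesssim 1$ and the bounds \eqref{v_eps}--\eqref{nabla.v_eps} on $\vu$ and $\vv_\eps$, and note that at most two spatial derivatives act while each derivative of $\Phi(x/\eps)$ or of $\eta_\eps$ costs at most one factor $\eps^{-1}$, so the total is $O(\eps^{1/q})$. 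The $W^{1,\infty}$-in-time version is inherited verbatim, since $\vu,\rho\in W^{1,\infty}(0,T;W^{1,\infty}(\Omega))$ and $\partial_t$ commutes with all spatial operations, affecting only the smooth factors $\vu$ and $\rho\vc f-\nabla p(\rho)$.

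The delicate point — and the one I expect to be the crux — is the divergence bound \eqref{div.Psi}. Writing $\tve=\eps\curl(\eta_\eps\Phi(x/\eps))\mathcal K^{-1}\vu+\eta_\eps\vu$ as in \eqref{novyWe}, the divergence of the first term never sees the oscillation of $W$ itself: a row-wise $\dv\circ\curl$ annihilates the part acting purely on $\eta_\eps\Phi(x/\eps)$, so $\dv\bigl(\eps\curl(\eta_\eps\Phi(x/\eps))\mathcal K^{-1}\vu\bigr)$ only contains terms where a derivative lands on the smooth factor $\mathcal K^{-1}\vu$, giving an $O(1)$ bound. In $\dv(\eta_\eps\vu)=\eta_\eps\dv\vu+\vu\cdot\nabla\eta_\eps$ the first summand is harmless, while $\vu\cdot\nabla\eta_\eps$ is a priori only $O(\eps^{-1})$; here I would use $\nabla\eta_\eps(x)=-\eta'(\dist(x,\partial\Omega)/\eps)\,\vc n/\eps$ (with $\vc n$ the outer normal at the nearest boundary point $Px$) together with the boundary condition $(\rho\vu)\cdot\vc n=0$ from \eqref{limit.system} and $\rho>0$, which gives $\vu(Px)\cdot\vc n=0$, to rewrite $\vu(x)\cdot\nabla\eta_\eps(x)=\eps^{-1}\eta'(\cdots)(\vu(x)-\vu(Px))\cdot\vc n$, bounded by $\eps^{-1}\|\nabla\vu\|_{L^\infty}|x-Px|\lesssim 1$. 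Hence $\|\dv\tve\|_{L^\infty(\Omega)}\lesssim 1$, and since $\Psi_\eps=\tve-\vv_\eps$ is supported where $\dist(\cdot,\partial\Omega)<\eps d$ with $\|\dv\vv_\eps\|_{L^\infty}\lesssim 1$ as well, one concludes $\|\dv\Psi_\eps\|_{L^q(\Omega)}\lesssim\eps^{1/q}$, and again the $W^{1,\infty}$-in-time version follows by differentiating the smooth factors. Collecting these verifications proves the proposition.
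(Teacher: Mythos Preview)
Your proposal is correct and follows essentially the same route as the paper: the same construction of $\Psi_\eps$ via the periodic vector potential $\Phi=\curl F$ with $F\in W^{3,\infty}$ obtained by the volume-plus-single-layer decomposition, the same scaling bounds from the $O(\eps)$ boundary strip, and the same key step for \eqref{div.Psi} of exploiting $\dv\circ\curl=0$ together with $\vu\cdot\vc n=0$ on $\partial\Omega$ to kill the dangerous $\eps^{-1}$ contribution in $\vu\cdot\nabla\eta_\eps$. Your write-up even supplies a bit more detail on the $\dv\circ\curl$ cancellation than the paper does.
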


We now redo the computations from Sections~\ref{sec:manip.rel.energy}--\ref{sec:conclusion} and point out the differences.\\

\paragraph{\textbf{Adaptations in Section \ref{sec:manip.rel.energy}.}}
We define $\tve$ as in \eqref{novyWe} such that $\tve = \vv_\eps + \Psi_\eps$ by \eqref{bdryCorr}. Then, by Lemma \ref{lem:r.v} and Proposition \ref{pro.boundary}, $\tilde{\vv}_\eps \in W^{1,\infty}_0((0,T) \times \Oe)$ such that $\tve$ can be used as a test function in the relative energy.\\

In the manipulation of $\mathcal{R}_\eps^2$ from \eqref{relen1}, due to linearity of $\mathbb{S}$ and the fact that $\tve - \vu_\eps|_{\partial \Oe} = 0$, we find
\begin{align*}
    \mathcal{R}_\eps^2 &= \eps^2 \int_\Oe \mathbb{S}(\nabla \vv_\eps + \nabla \Psi_\eps) : \nabla (\tve - \vu_\eps) \dd x\\
    &= \eps^2  \int_\Oe ( -\Delta W^\eps \vu -\vc z_\eps) \cdot (\tve - \vu_\eps) \dd x + \int_\Oe \mathbb{S}(\nabla \Psi_\eps) : \nabla (\tve - \vu_\eps) \dd x.
\end{align*}
The calculations afterwards are unaltered such that we arrive at
\begin{align}\label{errors.final.bd}
\begin{aligned}
     \mathcal{R}_\eps &= \eps^\lambda \int_\Oe  \rho_\eps (\partial_t \tve + (\vu_\eps \cdot \nabla) \tve) \cdot (\tve - \vu_\eps) \dd x\\
     &\quad + \int_\Oe (\rho_\eps- \rho) \vc{f} \cdot (\vu_\eps - \tve) \dd x\\
     &\quad + \int_\Oe -\left( 1 - \frac{\rho_\eps}{r_\eps} \right) \frac 1 \theta p'(r_\eps) \rho \dv \vu  - \dv \tve (p(\rho_\eps)-p(r_\eps)) \dd x \\
     &\quad + \int_\Oe \left(1 - \frac{\rho_\eps}{r_\eps} \right) \nabla p(r_\eps) \cdot (\vu_\eps - \tve) \dd x \\
     &\quad + \int_\Oe \left( 1 - \frac{\rho_\eps}{r_\eps} \right) p'(r_\eps) \bigg(\partial_t (r_\eps -\rho) + \bigg(W^\eps - \frac 1 \theta \Id \bigg) \vu \cdot \nabla \rho + W^\eps \vu \cdot \nabla(r_\eps - \rho) \bigg)\dd x \\
     &\quad + \int_\Oe \eps^2 \tilde{\vc z}_\eps \cdot (\tve - \vu_\eps) \dd x\\
      &\quad + \eps^2\int_\Oe \mathbb{S}(\nabla \Psi_\eps) : \nabla (\tve - \vu_\eps) \dd x \\
      &\quad + \int_\Oe \left( 1 - \frac{\rho_\eps}{r_\eps} \right) p'(r_\eps) \Psi_\eps \cdot \nabla r_\eps \dd x\\
     &=: \sum_{k=1}^8 I_k.
     \end{aligned}
\end{align}
Here, the additional term $I_8$ appears, because we still use \eqref{manip.R_4,5} in exactly this form (i.e., $\vv_\eps$ is not to be replaced by $\tve$) but in $I_4$ we replaced $\vv_\eps$ by $\tve$.

\medskip

\paragraph{\textbf{Adaptations in Section \ref{sec:estimate.errors}.}}
 For the additional term $I_7$, due to \eqref{Psi.q} for $q=2$ and $\delta>0$ we obtain
\begin{align*}
    \bigg| \eps^2 \int_\Oe \mathbb{S}(\nabla \Psi_\eps) : \nabla (\tve - \vu_\eps) \dd x \bigg| &\lesssim \eps^2 \|\nabla \Psi_\eps\|_{L^2(\Oe)} \|\nabla(\tve - \vu_\eps)\|_{L^2(\Oe)}\\
    &\leq\eps^2 (C_\delta \|\nabla \Psi_\eps\|_{L^2(\Oe)}^2 + \delta \|\nabla (\tve - \vu_\eps)\|_{L^2(\Oe)}^2) \leq \delta \eps^2 \|\nabla(\tve - \vu_\eps)\|_{L^2(\Oe)}^2 + C_\delta \eps.
\end{align*}

Similarly, for $I_8$, we find with $\|\nabla p(r_\eps)\|_{L^\infty(\Oe)} + \|1/r_\eps\|_{L^\infty(\Oe)} \lesssim 1$ that
\begin{align*}
    \bigg| \int_\Oe \left( 1 - \frac{\rho_\eps}{r_\eps} \right) p'(r_\eps) \Psi_\eps \cdot \nabla r_\eps \dd x \bigg| \lesssim \|\rho_\eps - r_\eps\|_{L^2(\Oe)} \|\Psi_\eps\|_{L^2(\Oe)} \lesssim \|\rho_\eps - r_\eps\|_{L^2(\Oe)}^2 + \eps.
\end{align*}

The estimates for  $I_2$, $I_4$, and $I_6$ are completely unaltered upon changing $\vv_\eps$ to $\tve$. The same holds true for $I_1$, since $\|\tve\|_{L^\infty((0,T) \times \Omega)} + \eps \|\nabla \tve\|_{L^\infty((0,T) \times \Omega)} \lesssim 1$ by the properties of $\vv_\eps$ and $\Psi_\eps$.

For $I_3$, we perform the same splitting in corresponding terms $I_{3,k}$, $k=1,2,3$, and estimate $I_{3,1}, I_{3,2}$ as before. For $I_{3,3}$, we now have
\begin{align*}
    I_{3,3} &= \int_{\Oe} \dv \bigg( \frac1\theta \vu - \tilde\vv _\eps\bigg) (p(\rho_\eps) - p(r_\eps)) \dd x\\
    &= \int_{\Oe} \bigg( \frac1\theta \Id - W^\eps \bigg):\nabla \vu (p(\rho_\eps) - p(r_\eps)) \dd x - \int_{\Oe} \dv \Psi_\eps   (p(\rho_\eps) - p(r_\eps)) \dd x.
\end{align*}
We estimate the additional term as follows
 \begin{align} \label{I_3,3.bd}
\begin{aligned}
    \left| \int_\Oe \dv \Psi_\eps (p(\rho_\eps) - p(r_\eps)) \dd x \right|
     &\leq \left| \int_\Oe \dv \Psi_\eps (p(\rho_\eps) - p'(r_\eps)(\rho_\eps - r_\eps) - p(r_\eps)) \dd x \right| 
       + \left| \int_\Oe \dv \Psi_\eps p'(r_\eps)(\rho_\eps - r_\eps)  \dd x \right| \\
    & \lesssim \|\dv \Psi_\eps\|_{L^\infty(\Omega_\eps)} \|p(\rho_\eps) - p'(r_\eps)(\rho_\eps - r_\eps) - p(r_\eps)\|_{L^1(\Omega_\eps)} \\
    &\quad +\|\dv \Psi_\eps\|_{L^2(\Omega_\eps)} \| p'(r_\eps)(\rho_\eps - r_\eps) \|_{L^2(\Omega_\eps)}  \\
    & \lesssim \eps +\|\rho_\eps - r_\eps\|_{L^2(\Oe)}^2 + \|p(\rho_\eps) \1_{\{\rho_\eps \geq 2 r_\eps\}}\|_{L^1(\Oe)} ,
\end{aligned}
 \end{align}
where we used \eqref{div.Psi} and \eqref{diff.H} in combination with \eqref{r_eps} in the last estimate. The remaining estimates for $I_3$ remain unchanged such that we have
\begin{align*}
\bigg| \int_0^\tau I_3 \dd t \bigg| \lesssim \eps + \eps^{\lambda -\frac 3 \gamma} + \int_0^\tau \|r_\eps - \rho_\eps\|_{L^2(\Oe)}^2 +\|p(\rho_\eps) \1_{\{\rho_\eps \geq 2 r_\eps\}}\|_{L^1(\Oe)} \dd t.
\end{align*}

Finally, regarding $I_5$, there is no $\tve$ involved and thus $I_{5}$ can be split into the same $\sum_{k=1}^5 I_{5,k}$ as before. The estimates for $I_{5,k}$ , $k \leq 4$, still work in the same way as before. Regarding $I_{5,5}$, though, integration by parts is no longer allowed since $W^{\eps} \vu \neq 0$ on $\partial \Omega$, and this is precisely the place where the former error $\eps$ will become $\eps^\frac{1}{2}$. Hence, we add and subtract $\Psi_\eps$ to obtain
\begin{align*}
    I_{5,5} =  \int_\Oe \frac{1}{r_\eps} \Psi_\eps \cdot \nabla (r_\eps - \rho) (p(\rho_\eps) - p(r_\eps)) \dd x -\int_\Oe \frac{1}{r_\eps} \tve \cdot \nabla (r_\eps - \rho) (p(\rho_\eps) - p(r_\eps)) \dd x =:  I_{5,5}^1  +  I_{5,5}^2 .
\end{align*}
For $I_{5,5}^1$, we treat the parts of $p(\rho_\eps)$ and $p(r_\eps)$ separately. For $p(r_\eps)$, we estimate
\begin{align*}
    \bigg| \int_\Oe \frac{1}{r_\eps} \Psi_\eps \cdot \nabla(r_\eps - \rho) p(r_\eps) \dd x \bigg| \lesssim \|\Psi_\eps\|_{L^2(\Oe)} \lesssim \eps^\frac12.
\end{align*}
For $p(\rho_\eps)$, we use the pressure decomposition from Lemma~\ref{lem:pressure.decomp}.
Since $\Psi_\eps, r_\eps^{-1}, \nabla r_\eps, \nabla \rho \in L^\infty((0,T) \times \Omega)$ uniformly in $\eps$, and $\supp \Psi_\eps \subset \{x \in \Omega : \dist(x,\partial \Omega) \leq  C \eps \}$, Lemma \ref{le:thickened.trace} yields 
\begin{align*}
   \bigg\| \frac{1}{r_\eps} \Psi_\eps \cdot \nabla (r_\eps - \rho) \bigg\|_{W^{1,\infty}(0,T;L_\eps^\infty(\Omega))}  \lesssim \eps.
\end{align*}
Hence, by Lemma~\ref{lem:pressure.decomp} for $p = \frac{2\gamma}{\gamma-1}$, we find
\begin{align*}
   \bigg| \int_0^\tau I_{5,5}^1 \dd t \bigg| =  \bigg| \int_0^\tau \int_\Oe \frac{1}{r_\eps} \Psi_\eps \cdot \nabla(r_\eps - \rho) p(\rho_\eps) \dd x \dd t \bigg|
    \lesssim  \eps^\frac12 + \eps^{\lambda - \frac{3}{\gamma}} + \eps^{\lambda} + \eps^{\frac{\lambda}{2} + 1} \lesssim \eps^\frac12 + \eps^{\lambda - \frac{3}{\gamma}}.
\end{align*}

For $I_{5,5}^2$ we proceed similarly as before when we showed \eqref{est:int.part}, which yields 
\begin{align*} 
    \bigg\| \frac{\tve}{r_\eps} \cdot \nabla (r_\eps - \rho) \bigg\|_{W^{1,\infty}(0,T;L_\eps^\infty(\Omega))} \lesssim \eps.
\end{align*}
Proceeding as for $I_{5,5}^1$, we observe that we get the same bounds for $I_{5,5}^2$. All in all, we arrive at
\begin{align*}
    \bigg| \int_0^\tau I_5 \dd t \bigg| \lesssim \eps^\frac12 + \eps^{\lambda - \frac{3}{\gamma}} + \int_0^\tau \|r_\eps - \rho_\eps\|_{L^2(\Oe)}^2 + \|p(\rho_\eps) \1_{\{\rho_\eps \geq 2 r_\eps\}}\|_{L^1(\Oe)} \dd t.
 \end{align*}

\paragraph{\textbf{Adaptations in Section \ref{sec:conclusion}.}}
Regarding Section \ref{sec:conclusion}, we combine the error estimates to obtain a similar inequality with $\vv_\eps$ replaced by $\tve$. As before, using \eqref{I_1.1} with $\vv_\eps$ replaced by $\tve$, we obtain
\begin{align*}
    \int_0^\tau \mathcal R_\eps \dd t &\leq \int_0^\tau C_\delta \|r_\eps - \rho_\eps\|_{L^2(\Oe)}^2 + C_\delta\|p(\rho_\eps) \1_{\{\rho_\eps \geq 2 r_\eps\}}\|_{L^1(\Oe)}  + C \delta\eps^2\|\nabla(\vu_\eps -\tve)\|_{L^2(\Oe)}^2 \dd t \\
    &\quad +  C_\delta ( \eps^{2\lambda -2} + \eps^\frac12  + \eps^{\lambda-\frac{3}{\gamma}}),
\end{align*}
for $\eps$ sufficiently small provided
\begin{align*}
    \lambda - 2 + \frac{6(\gamma-1)}{2\gamma} > 2.
\end{align*}
We see that this is the same condition as in \eqref{lambda.1}. In turn, all the observations made afterwards remain true. In the same spirit, using \eqref{I_1.2} with $\tve$ instead of $\vv_\eps$, we find \eqref{E.T.eps} under the additional condition \eqref{zusatz}. Since obviously, \eqref{est.final.lhs} and \eqref{est.final.rhs} still hold true (with $\vv_\eps$ replaced by $\tve$ and $\eps^2$ replaced by $\eps$), Theorem~\ref{thm1} is proven.

\section*{Acknowledgements}
R.H. warmly thanks Stefan Schiffer for discussions on extensions of divergence free vector fields.

R.H. has been supported  by the German National Academy of Science Leopoldina, grant LPDS 2020-10. \v S.N. and F.O. have been supported by the Czech Science Foundation (GA\v CR) project 22-01591S.  Moreover, \v S. N.  has been supported by  Praemium Academi{\ae} of \v S. Ne\v casov\' a. The Institute of Mathematics, CAS is supported by RVO:67985840.

 \begin{refcontext}[sorting=nyt]
\printbibliography

@book{FeireislNovotny2009singlim,
  title={Singular limits in thermodynamics of viscous fluids},
  author={Feireisl, Eduard and Novotn{\'y}, Anton{\'\i}n},
  volume={2},
  year={2009},
  publisher={Springer}
}

@book{Khruslov2021,
  title={Homogenized {M}odels of {S}uspension {D}ynamics},
  author={Khruslov, Evgen Ya},
publisher={EMS Press},
  year={2021}
}

@article{BerezhnyiBerlyandKhruslov2008,
  title={The homogenized model of small oscillations of complex fluids},
  author={Berezhnyi, Maksym and Berlyand, Leonid and Khruslov, Evgen Ya},
  journal={Networks and heterogeneous media},
  volume={3},
  number={4},
  pages={831--862},
  year={2008},
  publisher={Networks and Heterogeneous Media}
}

@article{FeireislNovotnyTakahashi2010,
    AUTHOR = {Feireisl, Eduard and Novotn\'{y}, Anton\'{\i}n and Takahashi, Tak\'{e}o},
     TITLE = {Homogenization and singular limits for the complete {N}avier--{S}tokes--{F}ourier system},
   JOURNAL = {J. Math. Pures Appl. (9)},
  FJOURNAL = {Journal de Math\'{e}matiques Pures et Appliqu\'{e}es. Neuvi\`eme S\'{e}rie},
    VOLUME = {94},
      YEAR = {2010},
    NUMBER = {1},
     PAGES = {33--57},
      ISSN = {0021-7824},
   MRCLASS = {35B27 (35Q35 76M50 76N10 80A20)},
  MRNUMBER = {2653978},
MRREVIEWER = {Alp O. Eden},
       DOI = {10.1016/j.matpur.2009.11.006},
       URL = {https://doi.org/10.1016/j.matpur.2009.11.006},
       publisher={Elsevier}
}

@article {BellaOschmann2022,
    AUTHOR = {Bella, Peter and Oschmann, Florian},
     TITLE = {Homogenization and low {M}ach number limit of compressible
              {N}avier-{S}tokes equations in critically perforated domains},
   JOURNAL = {J. Math. Fluid Mech.},
  FJOURNAL = {Journal of Mathematical Fluid Mechanics},
    VOLUME = {24},
      YEAR = {2022},
    NUMBER = {3},
     PAGES = {Paper No. 79, 11},
      ISSN = {1422-6928,1422-6952},
   MRCLASS = {76N06},
  MRNUMBER = {4447334},
MRREVIEWER = {Jianlin\ Zhang},
       DOI = {10.1007/s00021-022-00707-1},
       URL = {https://doi.org/10.1007/s00021-022-00707-1},
}

@article {RohdeVonWolff20,
    AUTHOR = {Rohde, Christian and von Wolff, Lars},
     TITLE = {Homogenization of nonlocal {N}avier-{S}tokes-{K}orteweg
              equations for compressible liquid-vapor flow in porous media},
   JOURNAL = {SIAM J. Math. Anal.},
  FJOURNAL = {SIAM Journal on Mathematical Analysis},
    VOLUME = {52},
      YEAR = {2020},
    NUMBER = {6},
     PAGES = {6155--6179},
      ISSN = {0036-1410,1095-7154},
   MRCLASS = {76S05 (76M50 76N99 76T10)},
  MRNUMBER = {4182904},
MRREVIEWER = {Adrian\ Muntean},
       DOI = {10.1137/19M1242434},
       URL = {https://doi.org/10.1137/19M1242434},
}

@article{Marusic-PalokaMikelic1996,
  title={An error estimate for correctors in the homogenization of the {S}tokes and {N}avier-{S}tokes equations in a porous medium},
  author={Maru\v{s}i\'{c}-Paloka, Eduard and Mikeli\'{c}, Andro},
  journal={Boll. Unione Mat. Ital},
  volume={10},
  number={3},
  pages={661--671},
  year={1996}
}

@article{Shen2022,
  title={Sharp convergence rates for {D}arcy’s law},
  author={Shen, Zhongwei},
  journal={Communications in Partial Differential Equations},
  volume={47},
  number={6},
  pages={1098--1123},
  year={2022},
  publisher={Taylor \& Francis}
}

@book{Vazquez2007,
  title={The porous medium equation: mathematical theory},
  author={V{\'a}zquez, Juan Luis},
  year={2007},
  publisher={Oxford University Press}
}

@article{BasaricChaudhuri2023,
  title={Low Mach number limit on perforated domains for the evolutionary Navier-Stokes-Fourier system},
  author={Basari{\'c}, Danica and Chaudhuri, Nilasis},
  journal={arXiv preprint arXiv:2303.10074},
  year={2023}
}

@article{Brinkman1949,
  title={A calculation of the viscous force exerted by a flowing fluid on a dense swarm of particles},
  author={Brinkman, Hendrik C.},
  journal={Flow, Turbulence and Combustion},
  volume={1},
  number={1},
  pages={27--34},
  year={1949},
  publisher={Springer}
}

@book{Darcy1856,
  title={Les fontaines publiques de la ville de {D}ijon},
  author={Darcy, Henry},
  year={1856},
  publisher={Victor Dalmont}
}

@article{DiazKersner1987,
  title={On a nonlinear degenerate parabolic equation in infiltration or evaporation through a porous medium},
  author={Diaz, Jesus Ildefonso and Kersner, Robert},
  journal={Journal of Differential Equations},
  volume={69},
  number={3},
  pages={368--403},
  year={1987},
  publisher={Elsevier}
}

@article{Diaz99,
  title={Two problems in homogenization of porous media},
  author={D{\'i}az, Jes{\'u}s Ildefonso D{\'i}az},
  journal={Extracta mathematicae},
  volume={14},
  number={2},
  pages={141--156},
  year={1999},
  publisher={Departamento de Matem{\'a}ticas}
}

@article{Feireisl2002,
  title={Compressible {N}avier--{S}tokes equations with a non-monotone pressure law},
  author={Feireisl, Eduard},
  journal={Journal of Differential Equations},
  volume={184},
  number={1},
  pages={97--108},
  year={2002},
  publisher={Academic Press}
}

@article{FeireislLu2015,
  title={Homogenization of stationary {N}avier--{S}tokes equations in domains with tiny holes},
  author={Feireisl, Eduard and Lu, Yong},
  journal={Journal of Mathematical Fluid Mechanics},
  volume={17},
  number={2},
  pages={381--392},
  year={2015},
  publisher={Springer}
}

@article{FeireislNamlyeyevaNecasova2016,
    AUTHOR = {Feireisl, Eduard and Namlyeyeva, Yuliya and Ne\v{c}asov\'{a}, \v{S}\'{a}rka},
     TITLE = {Homogenization of the evolutionary {N}avier--{S}tokes system},
   JOURNAL = {Manuscripta Math.},
  FJOURNAL = {Manuscripta Mathematica},
    VOLUME = {149},
      YEAR = {2016},
    NUMBER = {1-2},
     PAGES = {251--274},
      ISSN = {0025-2611},
   MRCLASS = {35Q30 (35B27 76D05 76M50)},
  MRNUMBER = {3447153},
MRREVIEWER = {Isabelle Gruais},
       DOI = {10.1007/s00229-015-0778-y},
       URL = {https://doi.org/10.1007/s00229-015-0778-y},
}

@article{FeireislNovotnySun2011,
  title={Suitable weak solutions to the {N}avier-{S}tokes equations of compressible viscous fluids},
  author={Feireisl, Eduard and Novotn{\'y}, Anton{\'\i}n and Sun, Yongzhong},
  journal={Indiana University Mathematics Journal},
  pages={611--631},
  year={2011},
  publisher={JSTOR}
}

@article{Gilding1989,
  title={Improved theory for a nonlinear degenerate parabolic equation},
  author={Gilding, BH},
  journal={Annali della Scuola Normale Superiore di Pisa-Classe di Scienze},
  volume={16},
  number={2},
  pages={165--224},
  year={1989}
}

@article{Watanabe1995,
  title={An approach by difference to the porous medium equation with convection},
  author={Watanabe, Michiaki},
  journal={Hiroshima Mathematical Journal},
  volume={25},
  number={3},
  pages={623--645},
  year={1995},
  publisher={Hiroshima University, Mathematics Program}
}

@article {Hoefer2022,
    AUTHOR = {H\"{o}fer, Richard M},
     TITLE = {Homogenization of the {N}avier-{S}tokes equations in
              perforated domains in the inviscid limit},
   JOURNAL = {Nonlinearity},
  FJOURNAL = {Nonlinearity},
    VOLUME = {36},
      YEAR = {2023},
    NUMBER = {11},
     PAGES = {6019--6046},
      ISSN = {0951-7715,1361-6544},
   MRCLASS = {35Q30},
  MRNUMBER = {4656977},
}

@article{HoeferKowalczykSchwarzacher21,
  title={Darcy’s law as low Mach and homogenization limit of a compressible fluid in perforated domains},
  author={H{\"o}fer, Richard M and Kowalczyk, Karina and Schwarzacher, Sebastian},
  journal={Mathematical Models and Methods in Applied Sciences},
  volume={31},
  number={09},
  pages={1787--1819},
  year={2021},
  publisher={World Scientific}
}

@book {Kellogg,
    AUTHOR = {Kellogg, Oliver Dimon},
     TITLE = {Foundations of potential theory},
    SERIES = {Die Grundlehren der mathematischen Wissenschaften},
    VOLUME = {Band 31},
   EDITION = {first},
 PUBLISHER = {Springer-Verlag, Berlin-New York},
      YEAR = {1967},
     PAGES = {ix+384},
   MRCLASS = {31.00},
  MRNUMBER = {222317},
}

@article {LuPokorny2021,
    AUTHOR = {Lu, Yong and Pokorn\'{y}, Milan},
     TITLE = {Homogenization of stationary {N}avier--{S}tokes--{F}ourier
              system in domains with tiny holes},
   JOURNAL = {J. Differential Equations},
  FJOURNAL = {Journal of Differential Equations},
    VOLUME = {278},
      YEAR = {2021},
     PAGES = {463--492},
      ISSN = {0022-0396},
   MRCLASS = {35B27 (35Q35 76N10)},
  MRNUMBER = {4205176},
       DOI = {10.1016/j.jde.2020.10.032},
       URL = {https://doi.org/10.1016/j.jde.2020.10.032},
}

@article{LuQian2023,
  title={Homogenization of some evolutionary non-Newtonian flows in porous media},
  author={Lu, Yong and Qian, Zhengmao},
  journal={arXiv preprint arXiv:2310.05121},
  year={2023}
}

@article{LuYang2023,
  title={Homogenization of Evolutionary Incompressible Navier--Stokes System in Perforated Domains},
  author={Lu, Yong and Yang, Peikang},
  journal={Journal of Mathematical Fluid Mechanics},
  volume={25},
  number={1},
  pages={4},
  year={2023},
  publisher={Springer}
}

@article{NecasovaOschmann2023,
  title={Homogenization of the two-dimensional evolutionary compressible {N}avier--{S}tokes equations},
  author={Ne{\v{c}}asov{\'a}, {\v{S}}{\'a}rka and Oschmann, Florian},
  journal={Calculus of Variations and Partial Differential Equations},
  volume={62},
  number={6},
  pages={184},
  year={2023},
  publisher={Springer}
}

@article{NecasovaPan2022,
  title={Homogenization problems for the compressible {N}avier--{S}tokes system in 2{D} perforated domains},
  author={Ne{\v{c}}asov{\'a}, {\v{S}}{\'a}rka and Pan, Jiaojiao},
  journal={Mathematical Methods in the Applied Sciences},
  volume={45},
  number={12},
  pages={7859--7873},
  year={2022},
  publisher={Wiley Online Library}
}

@article{Oschmann22,
  title={Homogenization of the full compressible Navier-Stokes-Fourier system in randomly perforated domains},
  author={Oschmann, Florian},
  journal={Journal of Mathematical Fluid Mechanics},
  volume={24},
  number={2},
  pages={1--20},
  year={2022},
  publisher={Springer}
}

@article{OschmannPokorny2023,
title = {Homogenization of the unsteady compressible {N}avier-{S}tokes equations for adiabatic exponent $\gamma> 3$},
author = {Oschmann, Florian and Pokorn{\'y}, Milan},
journal = {Journal of Differential Equations},
volume = {377},
pages = {271-296},
year = {2023},
issn = {0022-0396},
doi = {https://doi.org/10.1016/j.jde.2023.08.040},
url = {https://www.sciencedirect.com/science/article/pii/S0022039623005764}
}

@article{LiptonAvellaneda1990,
  title={Darcy's law for slow viscous flow past a stationary array of bubbles},
  author={Lipton, Robert and Avellaneda, Marco},
  journal={Proceedings of the Royal Society of Edinburgh Section A: Mathematics},
  volume={114},
  number={1-2},
  pages={71--79},
  year={1990},
  publisher={Royal Society of Edinburgh Scotland Foundation}
}

@article{PokornySkrisovsky2021,
  title={Homogenization of the evolutionary compressible {N}avier--{S}tokes--{F}ourier system in domains with tiny holes},
  author={Pokorn{\'y}, Milan and Sk{\v{r}}{\'i}{\v{s}}ovsk{\'y}, Emil},
  journal={Journal of Elliptic and Parabolic Equations},
  pages={1--31},
  year={2021},
  publisher={Springer}
}

@article{Mikelic91,
  title={Homogenization of nonstationary Navier-Stokes equations in a domain with a grained boundary},
  author={Mikeli{\'c}, Andro},
  journal={Annali di Matematica pura ed applicata},
  volume={158},
  number={1},
  pages={167--179},
  year={1991},
  publisher={Springer}
}

@article{DieningFeireislLu17,
  title={The inverse of the divergence operator on perforated domains with applications to homogenization problems for the compressible Navier--Stokes system},
  author={Diening, Lars and Feireisl, Eduard and Lu, Yong},
  journal={ESAIM: Control, Optimisation and Calculus of Variations},
  volume={23},
  number={3},
  pages={851--868},
  year={2017},
  publisher={EDP Sciences}
}

@article{HoeferJansen20,
  title={Convergence rates and fluctuations for the Stokes-Brinkman equations as homogenization limit in perforated domains},
  author={H{\"o}fer, Richard M and Jansen, Jonas},
  journal={arXiv preprint arXiv:2004.04111},
  year={2020}
}

@article{Giunti21,
  title={Derivation of Darcy’s law in randomly perforated domains},
  author={Giunti, A},
  journal={Calculus of Variations and Partial Differential Equations},
  volume={60},
  number={5},
  pages={1--30},
  year={2021},
  publisher={Springer}
}

@article {HillairetMoussaSueur19,
    AUTHOR = {Hillairet, Matthieu and Moussa, Ayman and Sueur, Franck},
     TITLE = {On the effect of polydispersity and rotation on the {B}rinkman
              force induced by a cloud of particles on a viscous
              incompressible flow},
   JOURNAL = {Kinet. Relat. Models},
  FJOURNAL = {Kinetic and Related Models},
    VOLUME = {12},
      YEAR = {2019},
    NUMBER = {4},
     PAGES = {681--701},
      ISSN = {1937-5093},
   MRCLASS = {35Q35 (35B27 76T20)},
  MRNUMBER = {3984746},
MRREVIEWER = {M. V. Goncharenko},
       DOI = {10.3934/krm.2019026},
       URL = {https://doi.org/10.3934/krm.2019026},
}

@article{FeireislNovotnyPetzeltova2001,
  title={On the existence of globally defined weak solutions to the {N}avier-{S}tokes equations},
  author={Feireisl, Eduard and Novotn{\'y}, Anton{\'\i}n and Petzeltov{\'a}, Hana},
  journal={Journal of Mathematical Fluid Mechanics},
  volume={3},
  number={4},
  pages={358--392},
  year={2001},
  publisher={Springer}
}

@article{GiuntiHoefer19,
	Author = {Giunti, Arianna and H\"{o}fer, Richard M.},
	Doi = {10.1016/j.anihpc.2019.06.002},
	Fjournal = {Annales de l'Institut Henri Poincar\'{e}. Analyse Non Lin\'{e}aire},
	Issn = {0294-1449},
	Journal = {Ann. Inst. H. Poincar\'{e} Anal. Non Lin\'{e}aire},
	Mrclass = {35B27 (35Q35 35R60)},
	Mrnumber = {4020526},
	Number = {7},
	Pages = {1829--1868},
	Title = {Homogenisation for the {S}tokes equations in randomly perforated domains under almost minimal assumptions on the size of the holes},
	Url = {https://doi.org/10.1016/j.anihpc.2019.06.002},
	Volume = {36},
	Year = {2019},
	Bdsk-Url-1 = {https://doi.org/10.1016/j.anihpc.2019.06.002}}

@book{Lions1998,
    AUTHOR = {Lions, Pierre-Louis},
     TITLE = {Mathematical topics in fluid mechanics. {V}ol. 2},
    SERIES = {Oxford Lecture Series in Mathematics and its Applications},
    VOLUME = {10},
      NOTE = {Compressible models,
              Oxford Science Publications},
 PUBLISHER = {The Clarendon Press, Oxford University Press, New York},
      YEAR = {1998},
     PAGES = {xiv+348},
      ISBN = {0-19-851488-3},
   MRCLASS = {76-02 (35-02 35Q30 76N10)},
  MRNUMBER = {1637634},
MRREVIEWER = {Denis Serre},
}

@article{BeliaevKozlov96,
	Author = {Beliaev, A. Yu. and Kozlov, S. M.},
	Doi = {10.1002/(SICI)1097-0312(199601)49:1<1::AID-CPA1>3.0.CO;2-J},
	Fjournal = {Communications on Pure and Applied Mathematics},
	Issn = {0010-3640},
	Journal = {Comm. Pure Appl. Math.},
	Mrclass = {76S05 (35Q30 35Q35 73B35 76M35)},
	Mrnumber = {1369834},
	Mrreviewer = {Alexander A. Kovalevsky},
	Number = {1},
	Pages = {1--34},
	Title = {Darcy equation for random porous media},
	Url = {https://doi.org/10.1002/(SICI)1097-0312(199601)49:1<1::AID-CPA1>3.0.CO;2-J},
	Volume = {49},
	Year = {1996},
	Bdsk-Url-1 = {https://doi.org/10.1002/(SICI)1097-0312(199601)49:1%3C1::AID-CPA1%3E3.0.CO;2-J}}

@article{LuSchwarzacher18,
	Author = {Lu, Yong and Schwarzacher, Sebastian},
	Doi = {10.1016/j.jde.2018.04.007},
	Fjournal = {Journal of Differential Equations},
	Issn = {0022-0396},
	Journal = {J. Differential Equations},
	Mrclass = {35B27 (35Q35)},
	Mrnumber = {3797621},
	Number = {4},
	Pages = {1371--1406},
	Title = {Homogenization of the compressible {N}avier-{S}tokes equations in domains with very tiny holes},
	Url = {https://doi.org/10.1016/j.jde.2018.04.007},
	Volume = {265},
	Year = {2018},
	Bdsk-Url-1 = {https://doi.org/10.1016/j.jde.2018.04.007}}

@incollection{Masmoudi02,
	Author = {Masmoudi, Nader},
	Doi = {10.1051/cocv:2002053},
	Fjournal = {ESAIM. Control, Optimisation and Calculus of Variations. European Series in Applied and Industrial Mathematics},
	Issn = {1292-8119},
	Journal = {ESAIM Control Optim. Calc. Var.},
	Mrclass = {35B27 (35Q30 76M50 76N10)},
	Mrnumber = {1932978},
	Mrreviewer = {Ben W. Schweizer},
	Note = {A tribute to J. L. Lions},
	Pages = {885--906},
	Title = {Homogenization of the compressible {N}avier-{S}tokes equations in a porous medium},
	Url = {https://doi.org/10.1051/cocv:2002053},
	Volume = {8},
	Year = {2002},
	Bdsk-Url-1 = {https://doi.org/10.1051/cocv:2002053}}

@article{CarrapatosoHillairet20,
	Author = {Carrapatoso, Kleber and Hillairet, Matthieu},
	Doi = {10.1007/s00220-019-03637-8},
	Fjournal = {Communications in Mathematical Physics},
	Issn = {0010-3616},
	Journal = {Comm. Math. Phys.},
	Mrclass = {76T20 (35Q35 35Q83 76D05)},
	Mrnumber = {4050096},
	Number = {1},
	Pages = {265--325},
	Title = {On the derivation of a {S}tokes-{B}rinkman problem from {S}tokes equations around a random array of moving spheres},
	Url = {https://doi.org/10.1007/s00220-019-03637-8},
	Volume = {373},
	Year = {2020},
	Bdsk-Url-1 = {https://doi.org/10.1007/s00220-019-03637-8}}

@article{Tartar80,
  title={Incompressible fluid flow in a porous medium-convergence of the homogenization process},
  author={Tartar, Luc},
  journal={Appendix of Non-homogeneous media and vibration theory},
  year={1980},
  publisher={Springer}
}

@article{Sanchez-Palencia80,
  title={Non-homogeneous media and vibration theory},
  author={S{\'a}nchez-Palencia, Enrique},
  journal={Lecture Note in Physics, Springer-Verlag},
  volume={320},
  pages={57--65},
  year={1980}
}

@article{FeireislJinNovotny2012,
  title={Relative entropies, suitable weak solutions, and weak-strong uniqueness for the compressible {N}avier--{S}tokes system},
  author={Feireisl, Eduard and Jin, Bum Ja and Novotn{\'y}, Anton{\'\i}n},
  journal={Journal of Mathematical Fluid Mechanics},
  volume={14},
  number={4},
  pages={717--730},
  year={2012},
  publisher={Springer}
}

@article{BellaOschmann2023,
  title={Inverse of divergence and homogenization of compressible {N}avier--{S}tokes equations in randomly perforated domains},
  author={Bella, Peter and Oschmann, Florian},
  journal={Archive for Rational Mechanics and Analysis},
  volume={247},
  number={2},
  pages={14},
  year={2023},
  publisher={Springer}
}

@article{DesvillettesGolseRicci08,
	Author = {Desvillettes, Laurent and Golse, Fran{\c{c}}ois and Ricci, Valeria},
	Doi = {10.1007/s10955-008-9521-3},
	Fjournal = {Journal of Statistical Physics},
	Issn = {0022-4715},
	Journal = {J. Stat. Phys.},
	Mrclass = {35Q35 (76D05 76M35 82C80)},
	Mrnumber = {2398959},
	Mrreviewer = {Hai-Liang Li},
	Number = {5},
	Pages = {941--967},
	Title = {The mean-field limit for solid particles in a {N}avier-{S}tokes flow},
	Url = {http://dx.doi.org/10.1007/s10955-008-9521-3},
	Volume = {131},
	Year = {2008},
	Bdsk-Url-1 = {http://dx.doi.org/10.1007/s10955-008-9521-3}}

@article{Allaire89,
  title={Homogenization of the {S}tokes flow in a connected porous medium},
  author={Allaire, Gr{\'e}goire},
  journal={Asymptotic Analysis},
  volume={2},
  number={3},
  pages={203--222},
  year={1989},
  publisher={IOS Press}
}

@article{Allaire90b,
	Author = {Allaire, Gr{\'e}goire},
	Coden = {AVRMAW},
	Doi = {10.1007/BF00375066},
	Fjournal = {Archive for Rational Mechanics and Analysis},
	Issn = {0003-9527},
	Journal = {Arch. Rational Mech. Anal.},
	Mrclass = {35B27 (35Q30 76D05 76D30 76S05)},
	Mrnumber = {1079190},
	Mrreviewer = {Doina Cioranescu},
	Number = {3},
	Pages = {261--298},
	Title = {Homogenization of the {N}avier-{S}tokes equations in open sets perforated with tiny holes. {II}. {N}oncritical sizes of the holes for a volume distribution and a surface distribution of holes},
	Url = {http://dx.doi.org/10.1007/BF00375066},
	Volume = {113},
	Year = {1990},
	Bdsk-Url-1 = {http://dx.doi.org/10.1007/BF00375066}}

@article{Allaire90a,
	Author = {Allaire, Gr{\'e}goire},
	Coden = {AVRMAW},
	Doi = {10.1007/BF00375065},
	Fjournal = {Archive for Rational Mechanics and Analysis},
	Issn = {0003-9527},
	Journal = {Arch. Rational Mech. Anal.},
	Mrclass = {35B27 (35Q30 76D05 76D30 76S05)},
	Mrnumber = {1079189},
	Mrreviewer = {Doina Cioranescu},
	Number = {3},
	Pages = {209--259},
	Title = {Homogenization of the {N}avier-{S}tokes equations in open sets perforated with tiny holes. {I}. {A}bstract framework, a volume distribution of holes},
	Url = {http://dx.doi.org/10.1007/BF00375065},
	Volume = {113},
	Year = {1990},
	Bdsk-Url-1 = {http://dx.doi.org/10.1007/BF00375065}}

@article{Muskat34,
  title={The flow of compressible fluids through porous media and some problems in heat conduction},
  author={Muskat, Morris},
  journal={Physics},
  volume={5},
  number={3},
  pages={71--94},
  year={1934},
  publisher={American Institute of Physics}
}
 \end{refcontext}
\end{document}